\definecolor{dark_purple}{rgb}{0.4, 0.0, 0.4}
\definecolor{dark_green}{rgb}{0.0, 0.7, 0.0}
\title[Factorizations induced by Nevanlinna-Pick factors]
{Factorizations induced by complete Nevanlinna-Pick factors}
\author[A. Aleman]{Alexandru Aleman}
\address{Lund University, Mathematics, Faculty of Science, P.O. Box 118, S-221 00 Lund, Sweden}
\email{alexandru.aleman@math.lu.se}
\author[M. Hartz]{Michael Hartz}
\address{Department of Mathematics, Washington University in St. Louis, One Brookings Drive,
	St. Louis, MO 63130, USA}
\email{mphartz@wustl.edu}
\thanks{M.H. was partially supported by an Ontario Trillium Scholarship and a Feodor Lynen Fellowship}
\author[J. M\raise.5ex\hbox{c}Carthy]{John E. M\raise.5ex\hbox{c}Carthy}
\address{Department of Mathematics, Washington University in St. Louis, One Brookings Drive,
	St. Louis, MO 63130, USA}
\email{mccarthy@wustl.edu}
\thanks{J.M. was partially supported by National Science Foundation Grant DMS 1565243}
\author[S. Richter]{Stefan Richter}
\address{Department of Mathematics, University of Tennessee, 1403 Circle Drive, Knoxville, TN 37996-1320, USA}
\email{richter@math.utk.edu}
\keywords{Nevanlinna-Pick kernel, multiplier, factorization, harmonic majorant}
\subjclass[2010]{Primary 46E22; Secondary 47B32, 30H15}
\date{}
\def\H{{\mathcal H}}
\def\N{{\mathbb N}}
\def\C{{\mathbb C}}
\def\HC{{\mathcal C}}
\def\HB{{\mathcal B}}
\def\HD{{\mathcal D}}
\def\HM{{\mathcal M}}
\def\R{{\mathbb R}}
\def\D{{\mathbb D}}
\def\B{{\mathbb B}}
\def\T{{\mathbb T}}
\def\HL{{\mathcal L}}
\def\HE{{\mathcal E}}
\DeclareMathOperator{\Mult}{Mult}
\DeclareMathOperator{\Hol}{Hol}
\renewcommand{\Re}{\operatorname{Re}}
\newcommand{\Ker}[1]{\mathsf{Ker}~}
\newtheorem{theorem}{Theorem}[section]
\newtheorem{prop}[theorem]{Proposition}
\newtheorem{lemma}[theorem]{Lemma}
\newtheorem{corollary}[theorem]{Corollary}
\numberwithin{equation}{section}
\def\be{ \begin{equation}}
\def\ee{ \end{equation}}
\newenvironment{Pf}{\noindent{\textit{Proof of}}}{$\square$ }
\begin{document}
\begin{abstract} \noindent   We prove a factorization theorem for reproducing kernel Hilbert spaces whose kernel has a normalized complete Nevanlinna-Pick factor. This result relates the functions in the original space to pointwise multipliers  determined by the  Nevanlinna-Pick kernel   and has a number of interesting applications. For example, for a large class of spaces including Dirichlet and Drury-Arveson spaces, we construct for every function $f$ in the space a pluriharmonic majorant of $|f|^2$ with the property that whenever the majorant is bounded, the corresponding function $f$ is a pointwise multiplier.
 	\end{abstract}
 		\maketitle
	\section{Introduction}	
	
	Let $\Omega$ be a nonempty set and let $\HE$ be a separable  Hilbert space.  A function $\kappa:\Omega \times \Omega \to \HB(\HE)$ is called positive definite, if whenever $n\in \N$ and $w_1, ...w_n \in \Omega$ and $x_1,..., x_n\in \HE$, then $\sum_{i,j=1}^n \langle \kappa(w_j,w_i)x_i,x_j\rangle \ge 0$. If $\kappa$ is positive definite, then we write $\kappa_w(z)=\kappa(z,w)>>0$. We note that such $\kappa$ is positive definite, if and only if there is an auxiliary Hilbert space $\HC$ and a function $K:\Omega \to \HB(\HC,\HE)$ such that $\kappa_w(z)=K(z)K(w)^*$ for all $z, w\in \Omega$ (see \cite{AgMcC}, Theorem 2.62).
	
	If $k_w(z)$ is a scalar-valued reproducing kernel and if $I_\HE$ denotes the identity operator on $\HE$, then $k_w(z)I_\HE$ is a positive definite operator-valued kernel. It is the reproducing kernel for the space $\H_k(\HE)$ which consists of all functions $F:\Omega \to \HE$ such that for each $x\in \HE$ the function $F_x(z)=\langle F(z),x\rangle_\HE$ is in $\H_k$ and such that $\|F\|^2_{\H_k(\HE)}=\sum_{n}\|F_{e_n}\|^2_{\H_k}<\infty$, where $\{e_n\}$ is an orthonormal basis for $\HE$. It is easy to show that the expression for $\|F\|_{\H_k(\HE)}$ is independent of the choice of orthonormal basis, and that for each $F\in \H_k(\HE)$, $x\in \HE$, and $z\in \Omega$ one has $k_zx \in \H_k(\HE)$ and $\langle F(z),x\rangle_{\HE}=\langle F,k_zx\rangle_{\H_k(\HE)}$. It follows that the set of finite linear combinations of functions of the form $k_z x$, $z\in \Omega$, $x \in \HE$, is dense in $\H_k(\HE)$.
	Of course, the map: $f\otimes x \mapsto fx$ extends to be a Hilbert space isomorphism between $\H_k \otimes \HE$ and $\H_k(\HE)$, but for this paper we prefer the standpoint of $\HE$-valued functions.

If $k_w(z)$ and $s_w(z)$ are reproducing kernels on $\Omega$ and if $\HC$ and $\HD$ are separable Hilbert spaces, then $\Mult(\H_s(\HD),\H_k(\HC))$
is the collection of functions $\Phi:\Omega\to \HB(\HD,\HC)$ such that $(M_\Phi F)(z)=\Phi(z)F(z)$ defines a bounded operator $M_\Phi :\H_s(\HD)\to \H_k(\HC)$. It is easy to check that for $\Phi \in \Mult(\H_s(\HD),\H_k(\HC))$ one has
\be
\label{multiplieraction}
M_\Phi^*(k_w x)=s_w\Phi(w)^*x
\ee
  for all $x \in \HC$ and $w\in \Omega$. Moreover, the multipliers $\Phi$ with $\|M_\Phi\|\le A$ are characterized by
\be
\label{general-multiplier-norm}A^2 I_\HC k_w(z)-\Phi(z)\Phi^*(w)s_w(z)>>0,
\ee
since it is equivalent to $\|M_\Phi^*f\|_{\H_s(\HD)}\le A\|f\|_{\H_k(\HC)}$, for a dense subset of $\H_k(\HC)$.

As usual,  $\Mult(\H,\H)$ is denoted by  $\Mult(\H)$.
Each $\varphi\in \Mult(\H_k)$ defines a bounded operator on $\H_k(\HE)$, $(M_\varphi F)(z) =\varphi(z) F(z)$ and
$\|\varphi\|_{\Mult(\H)}=\|M_\varphi\|_{\HB(\H_k(\HE))}$, the  norm of $M_\varphi$ acting on  $\H_k$.
We will refer to such $M_\varphi\in \HB(\H_k(\HE))$ as scalar multiplication operators.

\
The central role in this work is played by scalar  reproducing kernels
$s:\Omega\times \Omega\to\C$, of the form
\be
\label{normalized-CNP}
s_w(z)= \frac{1}{1-\sum_{n=1}^\infty u_n(z)\overline{u_n(w)}}=\frac1{1-u(z)u^*(w)},\ee
 where $u_n:\Omega\to \C$, satisfy $u_n(z_0)=0$ for each $n\in \N$, and some fixed point $z_0\in\Omega$. Moreover,  $u:\Omega\to \HB(l^2,\C)$ denotes the corresponding row operator-valued function. By  a theorem of  Agler and McCarthy \cite{AM00} it follows that these are precisely the normalized ($s_{z_0}=1$) reproducing kernels with the \emph{complete Nevanlinna-Pick property}, such that the  Hilbert space $\H_s$ is separable. This class of kernels is well-known and extensively studied. A comprehensive treatment can be found in \cite{AgMcC}. The standard definition  based on finite interpolation problems is deferred to the  preliminary Section 2.
 The normalization point $z_0$ will be fixed for the rest of the paper, and throughout we shall refer to such kernels as normalized CNP kernels.  Clearly, the positivity of $s$ implies that   $u(z)$ is a strict  contraction when $z\in \Omega$. The first examples that come to mind are when $\Omega=\mathbb{B}_d,$ the unit ball in $\C^d$,  $d\in \mathbb{N}$, $z_0=0$, and $s_w(z)=\frac1{1-\langle z,w\rangle}$. If $d=1$, $\H_s$ is the standard Hardy space $H^2$, while for $d>1$, $\H_s$ is called the Drury-Arveson space and is denoted by $H_d^2$, but the list of examples is much larger.  It includes weighted Dirichlet spaces in one variable, weighted Besov spaces in one or several variables (on $\B_d$), and even Sobolev spaces (see   \cite{AgMcC} for more examples).

  \
 The present paper is concerned with  reproducing kernel Hilbert spaces $\H_k(\HE)$, with the property  that the kernel $k$ has a normalized CNP factor, i.e.
  \be
  \label{CNPfactor}
  k=sg,
  \ee
where $s$ is a normalized CNP kernel and  $g$ is positive definite.
It turns out that this condition is fulfilled for a large class of  kernels,  for example Hardy and weighted Bergman spaces on $\B_d$, or the polydisc $\B_1^d$,  of course $\H_s$ itself, or more generally,  $\H_{s^t},~t\ge 1$. In Section 2 we shall show that whenever  $\Mult(\H_k)$ contains non-constant elements  the kernel $k$ has a nontrivial normalized CNP factor.   Our purpose is to establish a factorization result for functions in $\H_k(\HE)$ which is motivated by a simple observation. If we assume that
$$k_w(z)=s_w(z)G(z)G^*(w), \quad s_w=\frac1{ 1-u(z)u^*(w)},$$ with $z,w\in \Omega,~ G(z)\in \HB(\HC,\C),~u(z)\in \HB(l^2,\C)$,
it is not difficult to verify  $G\in \Mult(\H_s(\HC),\H_k)$, $u\in \Mult(\H_s(l^2),\H_s)$   and that both have  multiplier norm at most $1$.
 This leads to the factorization
 $$k_w=\frac{\varphi}{1-\psi},$$
 where  $\varphi(z)= G(z)G(w)^*\in \Mult(\H_s,\H_k),~\psi(z)=u(z)u(w)^*\in \Mult(\H_s)$ are  multipliers with $\psi(z_0)=0$, $\|\psi\|_{\Mult(\H_s)}\le 1$.\\

Our main theorem shows that this factorization continues to hold for arbitrary elements of $\H_k(\HE)$, and establishes a  sharp estimate  for the multiplier norms involved. Somewhat surprisingly,   our factorization is unique for elements of
unit norm.

  \begin{theorem}\label{factorization theorem}  Let $\Omega$ be a non-empty set, $z_0\in \Omega$. Let $k_w(z)$ be a reproducing kernel on $\Omega$ that can be factored in the form $k_w(z)= s_w(z) g_w(z)$ where $s_w(z)$ is a normalized CNP kernel with $s_{z_0}=1$  and $g_w(z)>>0$.\\
  (i) For  $F:\Omega\to \HE$, the following are equivalent:
  	
  	(a) $F\in \H_k(\HE)$  with $\|F\|_{\H_k(\HE)}\le 1$,
  	
  	(b)  There is a $\psi \in \Mult(\H_s)$ with $\psi(z_0)=0$ and  a $\Phi\in \Mult(\H_s,\H_k(\HE))$ such that
  	$$\|\psi h\|^2_{\H_s}+ \|\Phi h\|^2_{\H_k(\HE)} \le \|h\|^2_{\H_s} \ \text{ for all }h \in \H_s,$$ and  $F(z) = \frac{1}{1-\psi(z)}\Phi(z)$ for all $z\in \Omega$.\\
 (ii) 	If $F\in \H_k(\HE)$  with $\|F\|_{\H_k(\HE)}= 1$, then the factorization given in (b) is unique. In fact, $s_z\in \Mult(\H_k(\HE))$, and  if  $V_F(z) =2\langle F, s_zF\rangle_{\H_k(\HE)}-1,~z\in \Omega$, then $\text{\rm Re }V_F\ge 0$ in $\Omega$  and  (b) holds with
$$\psi=\frac{V_F-1}{V_F+1},\quad \Phi=\frac{2}{V_F+1} F.$$  	
  \end{theorem}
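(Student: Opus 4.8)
The plan is to prove part (ii) first, since it contains the mechanism, and then deduce (i). Two preliminary reductions help. If $k\equiv 0$ everything is trivial, so assume $k\not\equiv 0$. Moreover one may reduce to $g\equiv 1$ (i.e. $k=s$): writing $g_w(z)=G(z)G(w)^*$ with $G\colon\Omega\to\HB(\HC,\C)$, the identity $k_w(z)=s_w(z)G(z)G(w)^*$ gives $M_GM_G^*k_w=k_w$, so $M_G\colon\H_s(\HC)\to\H_k$, and hence $M_{G\otimes I_\HE}\colon\H_s(\HC\otimes\HE)\to\H_k(\HE)$, is a coisometric multiplier. Lifting $F$ (resp.\ $\Phi$) through this coisometry via $(M_{G\otimes I_\HE})^*$ — which is isometric and intertwines scalar multiplication operators — transports all the data back and forth between a general $k=sg$ and $k=s$ with $\HE$ enlarged to $\HC\otimes\HE$, preserving multiplier norms, the vanishing at $z_0$, the joint estimate in (b), uniqueness, and the relation $F=\frac{1}{1-\psi}\Phi$. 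So assume $k=s$.

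For (ii), first $s_z\in\Mult(\H_s(\HE))$: with $\psi_z(\cdot):=u(\cdot)u(z)^*$ one has $\psi_z\in\Mult(\H_s)$ and $\|M_{\psi_z}\|\le\|u(z)\|<1$ (since $u(z)$ is a strict contraction), so $s_z=\frac{1}{1-\psi_z}$ is a norm–convergent Neumann series in $\Mult(\H_s)$ and $M_{s_z}=(I-M_{\psi_z})^{-1}$ is bounded and invertible on $\H_s(\HE)$. The inequality $\operatorname{Re}V_F\ge 0$ then falls out of the operator identity
$$M_{s_z}+M_{s_z}^*-I \;=\; M_{s_z}\bigl(I-M_{\psi_z}M_{\psi_z}^*\bigr)M_{s_z}^* ,$$
obtained by flanking $(I-M_{\psi_z})+(I-M_{\psi_z}^*)-(I-M_{\psi_z})(I-M_{\psi_z}^*)=I-M_{\psi_z}M_{\psi_z}^*$ with $M_{s_z}$ and $M_{s_z}^*$; since $\|M_{\psi_z}\|<1$ this is $\ge (1-\|M_{\psi_z}\|^2)M_{s_z}M_{s_z}^*>0$, hence $2\operatorname{Re}V_F(z)=2\langle(M_{s_z}+M_{s_z}^*-I)F,F\rangle+2(\|F\|^2-1)>0$ when $\|F\|=1$. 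Therefore $V_F(z)+1$ never vanishes, $\psi,\Phi$ are well defined, $V_F(z_0)=2\langle F,F\rangle-1=1$ gives $\psi(z_0)=0$, and $\frac{1}{1-\psi}=\frac{V_F+1}{2}$ makes $F=\frac{1}{1-\psi}\Phi$ automatic.

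The remaining — and main — point of (ii) is the joint estimate, i.e.\ that $\Theta:=\binom{\psi}{\Phi}$ is a contractive multiplier from $\H_s$ into $\H_s\oplus\H_s(\HE)=\H_s(\C\oplus\HE)$. By \eqref{general-multiplier-norm} this is $s_w(z)\bigl(I_{\C\oplus\HE}-\Theta(z)\Theta(w)^*\bigr)>>0$. Setting $P(z):=\langle F,s_zF\rangle=\tfrac12(V_F(z)+1)$, so that $P(z)\Theta(z)=\binom{P(z)-1}{F(z)}$, and clearing the nowhere–zero scalar $P(z)\overline{P(w)}$, this is equivalent to positive–definiteness of the $\HB(\C\oplus\HE)$–valued kernel
$$\mathcal N(z,w)=\begin{pmatrix} P(z)+\overline{P(w)}-1 & -(P(z)-1)F(w)^*\\[2pt] -(\overline{P(w)}-1)F(z) & P(z)\overline{P(w)}I_\HE-F(z)F(w)^*\end{pmatrix}.$$
This is where the complete Nevanlinna–Pick property is used essentially: using $P(z)=\langle M_{s_z}^*F,F\rangle$ and the Agler/lurking–isometry decomposition afforded by $s=\frac{1}{1-uu^*}$ one produces a Hilbert space $\mathcal W$ and a map $\mathcal L\colon\Omega\to\HB(\mathcal W,\C\oplus\HE)$ with $\mathcal N(z,w)=\mathcal L(z)\mathcal L(w)^*$, the hypotheses $\operatorname{Re}V_F\ge 0$ and $\|F\|=1$ being precisely what make the resulting Gram expression nonnegative. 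I expect the positivity of $\mathcal N$ to be the hardest technical step of the proof.

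Finally, uniqueness and (i). For uniqueness, parametrize the data in (b) by $V:=\frac{1+\psi}{1-\psi}$; the $(1,1)$–block of $(\star)$ forces $|\psi|\le 1$, hence $\operatorname{Re}V\ge 0$, and $\psi(z_0)=0$ gives $V(z_0)=1$. Running the reduction above backwards rewrites the joint estimate as positive–definiteness of an explicit kernel $\mathcal Q_V$ built from $V$, $s$ and $F$ whose $(1,1)$–entry is $(V(z)+\overline{V(w)})s_w(z)$; testing $\mathcal Q_V$ against vectors supported on the points occurring in a kernel expansion of $F$ and using $\|F\|=1$ saturates the inequality and forces $V=V_F$. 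For (i): the implication (b)$\Rightarrow$(a) is the inequality $s_w(z)I_\HE-F(z)F(w)^*>>0$, equivalently (clearing $(1-\psi(z))\overline{(1-\psi(w))}$) $(1-\psi(z))(1-\overline{\psi(w)})s_w(z)I_\HE-\Phi(z)\Phi(w)^*>>0$, which one deduces from the hypothesis $(\star)$ by the same lurking–isometry mechanism (read off a transfer–function realization of $\Theta$ from $(\star)$ and $u$, then evaluate). For (a)$\Rightarrow$(b): if $\|F\|=1$ this is (ii); if $\|F\|<1$, choose $h\in\H_k$ with $\|h\|^2=1-\|F\|_{\H_k(\HE)}^2$, apply (ii) to $F\oplus h\in\H_k(\HE\oplus\C)$ to get $(\tilde\psi,\tilde\Phi)$, and set $\psi:=\tilde\psi$, $\Phi:=P_\HE\circ\tilde\Phi$; the orthogonal projection can only decrease $\|\Phi h\|$, so the joint estimate persists, while $F=P_\HE(F\oplus h)=\frac{1}{1-\psi}\Phi$.
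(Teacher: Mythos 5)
Your outline has the right overall shape --- reduce to $k=s$ via the coisometric multiplier, define the Sarason function, take the Cayley transform $\psi=(V_F-1)/(V_F+1)$, and handle $\|F\|<1$ by augmenting $F$ to a unit vector --- and several individual observations are correct: the identity $M_{s_z}+M_{s_z}^*-I=M_{s_z}(I-M_{\psi_z}M_{\psi_z}^*)M_{s_z}^*$ giving $\Re V_F\ge 0$, and the equivalence of $\|F\|_{\H_s(\HE)}\le 1$ with $s_w(z)I_\HE-F(z)F(w)^*>>0$. But the proof has a genuine hole exactly where you say you expect the hardest step to be: the positive definiteness of the kernel $\mathcal N$ (which, incidentally, should carry an overall factor $s_w(z)$ coming from \eqref{general-multiplier-norm}) is never established. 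Saying that ``one produces'' a Gram factorization $\mathcal N=\mathcal L\mathcal L^*$ by a lurking isometry is circular --- exhibiting such a factorization \emph{is} the positivity claim --- and the pointwise facts $\Re V_F\ge 0$ and $\|F\|=1$ are far from sufficient for it. The entire content of the theorem lives here. The paper's route is Lemma \ref{main-lemma-sarason}: the kernel $\tfrac12\bigl(V_F(z)+\overline{V_F(w)}\bigr)-\langle s_wF,s_zF\rangle_{\H_k(\HE)}/s_w(z)$ is positive definite, proved by a concrete computation combining $h-h(z_0)=\sum_n u_nM_{u_n}^*h$ (Lemma \ref{CNP}(ii)) with the contractivity of the row multiplier $u$ on $\H_k(\HE)$ (Lemma \ref{general-criterion}); the joint estimate then follows for kernel-polynomial $F$ (Lemma \ref{|f|^2<=Re Phi}) and for general $F$ by a weak-limit argument (Proposition \ref{a-factorization}). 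Nothing in your sketch replaces this computation.

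A second, independent error: your (b)$\Rightarrow$(a) step claims that $(1-\psi(z))(1-\overline{\psi(w)})s_w(z)I_\HE-\Phi(z)\Phi(w)^*>>0$ is ``deduced from the hypothesis $(\star)$.'' It cannot be: Proposition \ref{a-factorization} shows that \emph{every} $F\in\H_k(\HE)$, of arbitrary norm, admits a factorization satisfying $(\star)$ once the condition $\psi(z_0)=0$ is dropped, so $(\star)$ alone never implies $\|F\|\le 1$. (Concretely, in $H^2$ the constants $\psi\equiv 1-\varepsilon$, $\Phi\equiv\sqrt{2\varepsilon-\varepsilon^2}$ satisfy $(\star)$ while $F\equiv\sqrt{(2-\varepsilon)/\varepsilon}$ has large norm.) The normalization $\psi(z_0)=0$ must enter; the paper does this through $F^r=\Phi/(1-r\psi)$, the bound $\|F^rh\|^2\le\Re\langle\frac{1+r\psi}{1-r\psi}h,h\rangle$ evaluated at $h=s_{z_0}=1$, and the weak convergence $F^r\to F$ (Lemma \ref{approx-lemma}), which also supplies the positive kernel $L(z,w)$ on which the uniqueness argument runs ($L(z_0,z_0)=0$ forces $L(\cdot,z_0)=0$, hence $\frac{1+\psi}{1-\psi}=V_F$). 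Your uniqueness sketch presupposes that same unproved kernel positivity. So the skeleton is compatible with the paper's constructive proof, but the two load-bearing positivity statements are missing and one implication is mis-derived.
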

From   \cite[Lemma 1]{AHMcR} it follows that if $\psi$ is the multiplier given in part (i) of the theorem then $1 - \psi$  is cyclic, i.e. the multiplier-invariant subspace generated by this function equals $\H_s$. It will also turn out from the proof that if $\|F\|_{\H_k(\HE)}<1$, the factorization is far from unique. In fact if   (b) holds, the restriction imposed on  $\|F\|_{\H_k(\HE)}$ is related only to the condition $\psi(z_0)=0$. Our argument shows (see Proposition \ref{a-factorization} below) that  without this condition the factorization $F=\frac{1}{1-\psi}\Phi$ with $\Phi,\psi$ as above,  holds true for an arbitrary $F\in \H_k(\HE)$.

\
In the case when $k=s$ and $\HE=\C$, the factorization in (b)   was recently proved in \cite{AHMcR}. The result was previously  shown for the Drury-Arveson space on $\mathbb{B}_d$,
  by Alpay, Bolotnikov and Kaptano\u{g}lu
  \cite[Theorem 10.3]{ABK02}  while for  the Dirichlet space, the corresponding  question was  posed in \cite[Section 3]{Ross06} and at the end of \cite{MR15}.
  The proof of the result in \cite{AHMcR}  relies on
  an appropriate  version of Leech's theorem, and at the beginning of Section 3 we shall briefly indicate how one can use that method to extend the result to the setting considered here. However, our proof of Theorem \ref{factorization theorem} follows a different path, namely the one suggested by part (ii). The argument is   based on an idea of Sarason (see \cite{Sarason1,Sarason2}) and further developments in \cite{GrRiSu} and will be presented in Section 3. It leads to a constructive approach which has a number of interesting applications  given in  Section 4. The function $V_F$ defined in part (ii) of the main theorem plays a crucial role for our development, and it will be called \emph{the Sarason function of $F$}. Its real part behaves similarly to the well known \emph{least harmonic majorant}
from the theory of Hardy spaces. We remark that in the case when $k=s$ and $\HE = \mathbb C$, this argument
also provides a more explicit construction of the factorization in the main result of \cite{AHMcR}. \\
In Section 4  we show that multiplier-invariant subspaces in $\H_k$ are generated by functions in $\Mult(\H_s,\H_k)$,   that  extremal functions in $\H_k(\HE)$ belong to $\Mult(\H_s,\H_k(\HE))$, and derive a pointwise estimate for these functions.  These  recover some results in \cite{McT} and in the recent paper   \cite{Eschetal}, but  apply to other situations as well.   The Sarason function of an extremal element is constant
equal to $1$ in $\Omega$, and motivated by this observation we continue the investigation of  $\Mult(\H_s,\H_k(\HE))$ in terms of this object. It turns out that if the one-function corona theorem holds in $\Mult(\H_s)$, then  $F\in
\Mult(\H_s,\H_k(\HE))$ whenever $V_F$ is bounded in $\Omega$. \\One of our main applications shows that for  a large class of  kernels $k$  the same conclusion holds under the weaker assumption that the real part of the Sarason function is bounded.  The corresponding class of spaces $\H_k$ contains
Bergman, Hardy  and weighted Besov spaces on the ball or polydisc, in particular the Drury-Arveson spaces $H_d^2,~d\in \N$. In all of these cases, our Theorem \ref{diff-op-thm} together with Corollary  \ref{gen Harmonic Majorant} show a surprising analogy  to the classical $H^2$-case: \\The real part of the Sarason function of $F$ is a  majorant  of $\frac{s_z(z)}{k_z(z)}\|F(z)\|^2_\HE$ (in most cases pluriharmonic), such that whenever this majorant is bounded, $F$ belongs to $\Mult(\H_s,\H_k)$. The converse of this statement fails to be true. Based on the work of Shimorin (\cite{Shimorin},\cite{Shimorin2}), we construct multipliers of the standard weighted Dirichlet spaces on the unit disc whose Sarason functions have unbounded real part.
\\
Our theorem about multipliers applies also to Carleson embeddings. More precisely,
in the special case when $\H_k$ is a weighted Bergman space and $F=1$ this extends to the general context a sufficient condition for such embeddings  obtained recently in \cite{MaRa} for the Dirichlet space. 
 As pointed out above, in Section 2 we gather  some useful  preliminary results.

\section{Preliminaries}	
 \subsection{The complete Nevanlinna-Pick property}
 Given a reproducing kernel $s$ on the non-void set $\Omega$, we say that $s$ is a \emph{complete Nevanlinna- Pick kernel} if $\H_s$ has the following property: For every $r \in \mathbb{N}$ and every finite collection of points $z_1,\ldots,z_n \in X$
 and matrices $W_1,\ldots,W_n \in M_r(\C)$,
 positivity of the $nr \times nr$-block matrix
 \begin{equation*}
 \Big[ s(z_i,z_j) (I_{\C^r} - W_i W_j^*) \Big]_{i,j=1}^n
 \end{equation*}
 implies that there exists $u \in \Mult(\H_s(\C^r))$ of norm at most $1$ such that
 \begin{equation*}
 u(z_i) = W_i \quad (i=1,\ldots,n).
 \end{equation*}
 Such kernels were characterized
 by a theorem of Quiggin \cite{Quiggin93} and McCullough \cite{McCullough92}. Complete Nevanlinna-Pick kernels $s$  can be normalized at any point provided that  $s_w(z) \neq 0$ for all $z,w \in \Omega$ (see \cite[Section 2.6]{AgMcC}). 
 However, in this paper we shall only use the form \eqref{normalized-CNP} which follows from the Agler-McCarthy theorem in \cite{AM00}.  For further purposes we record  an elementary  result whose proof is included for the sake of  completeness.

\begin{lemma}\label{CNP} If $s$ is a normalized CNP kernel with $$s_w(z)= \frac{1}{1-\sum_{n=1}^\infty u_n(z)\overline{u_n(w)}},$$ where $u_n:\Omega\to \C$ and $u_n(z_0)=0$,
  then:\\
(i)  $u_n \in \Mult(\H_s)$ for each $n\in \mathbb{N}$ and if  $h_n\in\ \H_s$, then $$\|\sum_n u_n h_n\|^2\le \sum_n \|h_n\|^2.$$
(ii) $I-\sum_iM_{u_i}M_{u_i}^* = P_0$, where $P_0$ is the projection onto $s_{z_0}=1$, i.e.  $h - \sum_iM_{u_i}M_{u_i}^*h=h(z_0)$ for all $h\in \H_s$.	
\end{lemma}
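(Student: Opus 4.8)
The plan is to extract everything from the single identity $s_w(z)(1 - u(z)u^*(w)) = 1$, rewritten as
\[
s_w(z) = 1 + \sum_n u_n(z)\overline{u_n(w)}\, s_w(z).
\]
First I would establish (i). The natural route is to show that the row multiplication operator $M_u\colon \H_s(\ell^2)\to\H_s$, $(M_u H)(z) = \sum_n u_n(z) H_n(z)$, is a contraction, since the inequality in (i) is precisely $\|M_u H\|^2 \le \|H\|^2$ for $H = (h_n)_n$. By the characterization \eqref{general-multiplier-norm} of multipliers of norm at most $1$ (applied with $\H_s(\ell^2)$ in the domain and $\H_s$ in the range, so that $k = s$ and the matrix kernel is $s_w(z) I_{\ell^2}$), it suffices to check that
\[
s_w(z)\cdot 1 - u(z)u^*(w)\, s_w(z) >> 0,
\]
i.e. that $s_w(z) - u(z)u^*(w)s_w(z) = 1$, which is literally the defining identity for $s$ rearranged. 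Since the constant kernel $1$ is trivially positive definite, this gives $\|M_u\|\le 1$, hence each coordinate $u_n$ is a multiplier (restrict to $H$ supported in the $n$-th slot) and the stated inequality follows. One should note here that $\H_s$ separable forces the sum $\sum_n u_n(z)\overline{u_n(w)}$ to be interpreted in the obvious convergent sense; this is part of the hypothesis that $s$ has the form \eqref{normalized-CNP}.

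Next, (ii). The adjoint formula \eqref{multiplieraction} gives $M_{u_n}^* s_w = \overline{u_n(w)}\, s_w$ (here $s = k$ and the multiplier is scalar). Therefore, for a fixed $w$,
\[
\Big(\sum_n M_{u_n} M_{u_n}^*\Big) s_w = \sum_n u_n(z)\overline{u_n(w)}\, s_w(z) = s_w(z) - 1 = s_w - s_{z_0},
\]
using the defining identity in the last step and $s_{z_0} \equiv 1$. Hence $\big(I - \sum_n M_{u_n}M_{u_n}^*\big) s_w = s_{z_0}$ for every $w$. Since $\langle s_{z_0}, s_w\rangle = s_w(z_0) = 1 = \langle s_{z_0},s_{z_0}\rangle$ and $\|s_{z_0}\|^2 = s_{z_0}(z_0) = 1$, the operator $P_0 := I - \sum_n M_{u_n}M_{u_n}^*$ agrees with the rank-one projection onto $\C s_{z_0} = \C\cdot 1$ on the dense set $\{s_w : w\in\Omega\}$; because $P_0$ is bounded (indeed $0\le P_0 \le I$ by part (i)), it agrees with that projection everywhere. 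Evaluating $P_0 h$ at a point, $(P_0 h)(z) = \langle P_0 h, s_z\rangle = \langle h, P_0 s_z\rangle = \langle h, s_{z_0}\rangle = h(z_0)$, which is the last assertion.

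The only genuinely delicate point is the interchange of the infinite sum with the operators in part (ii): one must make sure $\sum_n M_{u_n}M_{u_n}^*$ converges in a suitable sense (SOT suffices) and that the termwise evaluation above is legitimate. This is handled by the monotone convergence of the partial sums of positive operators, whose uniform bound $\sum_{n\le N} M_{u_n}M_{u_n}^* \le I$ is exactly what part (i) yields; so the series converges strongly to a positive contraction, and continuity of the inner product against the fixed vector $s_w$ justifies the computation. Everything else is a direct rearrangement of the normalization identity for $s$, so I expect no further obstacles.
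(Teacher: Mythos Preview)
Your proof is correct and follows essentially the same route as the paper's: part (i) comes from rewriting the defining identity as $(1-u(z)u^*(w))s_w(z)=1>>0$ and invoking \eqref{general-multiplier-norm}, and part (ii) is verified on reproducing kernels and extended by density and boundedness. Your treatment is in fact a bit more careful than the paper's about the SOT-convergence of $\sum_n M_{u_n}M_{u_n}^*$, but the argument is the same.
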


\begin{proof} (i)  From the identity $$(1-\sum_n u_n\overline{u_n(w)}) s_w=s_{z_0}=1>>0,$$
we see that
\eqref{general-multiplier-norm}
 holds with $A=1$ and $\Phi:\Omega\to \HB(l^2,\C),~\Phi(z)(x_n)=\sum_nu_n(z)x_n$.
(ii) holds for reproducing kernels, hence it holds for all functions in $\H_s$ because the span of reproducing kernels is dense in $\H_s$ and the operator on the right hand side is bounded. The second part of (ii) is just a reformulation of the first.
\end{proof}

\subsection{Kernels with a CNP factor}

The simplest examples of kernels $k$  with the normalized CNP factor $s$ are given by $k=s^t,~ t\ge 1$.
Note that if $s=\frac1{1-uu^*}$ and $0 < t < 1$, then
$$s^{t}=\sum a_k(t)(uu^*)^k,$$ with $a_k(t)>0$, hence $s^{t}, ~0<t<1$  is positive definite. Then  $s^t$ is positive definite for all $t>0$ by the Schur product theorem.

The most general condition for a factorization of the form \eqref{CNPfactor} is as follows.
\begin{lemma} \label{general-criterion} Let $s$  be a normalized CNP kernel with $s_w(z)= \frac{1}{1- u(z)u^*(w)}$, with $u(z)\in\HB(l^2,\C)$.
The kernel $k$ can be factored as $k=sg$ with $g$ positive definite  if and only if   $u\in \Mult(\H_k(l^2),\H_k)$ with $\|M_u\|\le 1$.
 In this case $\Mult(\H_s(\HE))$ is contractively  contained in $\Mult(\H_k(\HE))$.
\end{lemma}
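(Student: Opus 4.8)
The plan is to reduce the whole statement to the multiplier–norm criterion \eqref{general-multiplier-norm}, used in the special case in which the domain and codomain kernels agree.

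First a preliminary remark. Since $s$ is given in the form \eqref{normalized-CNP}, the scalar $1-u(z)u^*(w)$ is finite and nonzero for all $z,w\in\Omega$, so
$$g_w(z):=\bigl(1-u(z)u^*(w)\bigr)\,k_w(z)$$
is an unambiguously defined function on $\Omega\times\Omega$ with $s_w(z)g_w(z)=k_w(z)$. Hence any positive definite cofactor $g'$ in a factorization $k=sg'$ must coincide pointwise with this $g$, and therefore $k$ admits a factorization $k=sg$ with $g>>0$ if and only if this particular $g$ is positive definite.

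The equivalence is then immediate: the condition $g>>0$ reads verbatim $I_\C\,k_w(z)-u(z)u^*(w)\,k_w(z)>>0$, which is exactly \eqref{general-multiplier-norm} applied with the single kernel $k$ in the role of both $s$ and $k$ there, with $\HD=l^2$, $\HC=\C$ and $A=1$; by that criterion it is equivalent to $u\in\Mult(\H_k(l^2),\H_k)$ with $\|M_u\|\le 1$. This proves both implications at once.

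For the contractive inclusion, let $\Phi\in\Mult(\H_s(\HE))$ with $\|M_\Phi\|\le A$. By \eqref{general-multiplier-norm}, applied with the single kernel $s$ and with $\HD=\HC=\HE$, this says that the $\HB(\HE)$-valued kernel $P_w(z):=\bigl(A^2 I_\HE-\Phi(z)\Phi^*(w)\bigr)s_w(z)$ is positive definite. Next I would invoke the operator–valued Schur product fact: the pointwise product of an $\HB(\HE)$-valued positive definite kernel with a scalar-valued positive definite kernel is again positive definite; this follows from Kolmogorov factorizations $g_w(z)=\langle c(w),c(z)\rangle$ and $P_w(z)=B(z)B(w)^*$ (cf.\ \cite[Theorem 2.62]{AgMcC}), since then $\sum_{i,j}\langle g(w_j,w_i)P(w_j,w_i)x_i,x_j\rangle=\bigl\|\sum_i c(w_i)\otimes B(w_i)^*x_i\bigr\|^2\ge 0$, exactly as in the scalar Schur product theorem. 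Multiplying the positive definite kernel $P$ by $g>>0$ and using $k=sg$ yields $\bigl(A^2 I_\HE-\Phi(z)\Phi^*(w)\bigr)k_w(z)>>0$; a final application of \eqref{general-multiplier-norm}, now with the single kernel $k$, gives $\Phi\in\Mult(\H_k(\HE))$ with $\|M_\Phi\|_{\H_k(\HE)}\le A$. Taking the infimum over admissible $A$ yields the claimed contractive containment, and the very same argument gives, more generally, the contractive inclusion $\Mult(\H_s(\HD),\H_s(\HC))\subseteq\Mult(\H_k(\HD),\H_k(\HC))$.

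I do not anticipate a genuine obstacle: once \eqref{general-multiplier-norm} is available the proof is essentially bookkeeping. The only two points that deserve a word of care are the non-vanishing of $1-u(z)u^*(w)$ (so that $g$ makes sense before one knows it is a kernel) and the operator–valued Schur product statement, which is standard and follows from the Kolmogorov factorization as indicated.
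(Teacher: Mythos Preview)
Your proof is correct and follows essentially the same route as the paper: both reduce the first equivalence to \eqref{general-multiplier-norm} via the identity $(1-u(z)u^*(w))k_w(z)=k_w(z)/s_w(z)$, and both obtain the contractive inclusion by applying the Schur product theorem to $(I_\HE-\Phi(z)\Phi^*(w))s_w(z)>>0$ against $g>>0$. Your write-up is somewhat more careful about the well-definedness of $g$ and about justifying the operator-valued Schur product, but the underlying argument is the same.
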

\begin{proof}
 To see the first part,  use  the identity
\[
(I - u(z) u^*(w) ) k_w(z) = \frac{1}{s_w(z)} k_w(z) ,\]
to conclude that $k/s$ is positive definite if and only if the left hand side is and then  apply \eqref{general-multiplier-norm}.
For the second part,  note that  if $\varphi\in \Mult(\H_s(\HE))$ with $\|M_\varphi\|=1$ then $(I_\HE-\varphi(z)\varphi^*(w))s_w(z)>>0$, hence by the Schur product theorem $$ (I_\HE-\varphi(z)\varphi^*(w))s_w(z)g_w(z)=(I_\HE-\varphi(z)\varphi(w)^*)k_w(z)>>0.$$
\end{proof}
It is interesting to note that the second part of the lemma can be used to characterize complete Nevanlinna-Pick kernels (see \cite[Theorem 4.4]{ClHa}).
With the result  in hand we can list some  further examples of kernels with a normalized CNP factor. Recall that a $d$-contraction on the Hilbert space $\H$ is a commuting tuple $(T_1,\ldots,T_d), ~T_j\in \HB(\H)$, with $\sum_{j=1}^dT_jT_j^*\le I$.

\begin{corollary}
	\label{exCNPfactor}
	(i)
  If $\Mult(\H_k(l^2), \H_k)$  contains a nonzero element $u= (u_n)_{n\ge 1}$ of norm at most $1$
   with $u(z_0)=0$ and   $s_w(z)= \frac{1}{1-\sum_{n=1}^\infty u_n(z)\overline{u_n(w)}}$, then $k/s>>0$.\\
	(ii) Let $\Omega =\mathbb{B}_d,~d\in \N$ and assume that multiplication by the coordinates forms a $d$-contraction on $\H_k$. If $s=\frac1{1-\langle z,w\rangle}$  then $k/s>>0.$\\
	(iii) If the assumption in (ii) holds for $d=1$, i.e. multiplication by the identity function is
  a contraction on $\H_k$, then for every normalized CNP kernel $s$ such that $\mathcal H_s$ consists
  of analytic functions, we have $k/s>>0$.
\end{corollary}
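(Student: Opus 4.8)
The plan is to obtain all three parts from Lemma~\ref{general-criterion}, which converts the factorization $k=sg$ with $g>>0$ into the single requirement that the row function attached to $s$ be a contractive multiplier from $\H_k(l^2)$ into $\H_k$; parts (ii) and (iii) then reduce to exhibiting that row function and checking the relevant operator inequality. Part (i) is essentially a reformulation: the hypothesis provides $u=(u_n)_{n\ge1}\in\Mult(\H_k(l^2),\H_k)$ with $\|M_u\|\le1$ together with a reproducing kernel $s$ of the form \eqref{normalized-CNP} for which $u_n(z_0)=0$; by the Agler--McCarthy theorem $s$ is then a normalized CNP kernel with $s_{z_0}=1$, and Lemma~\ref{general-criterion} gives $k/s>>0$.

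For (ii) I would take as the row function the tuple of coordinates, $u(z)=(z_1,\dots,z_d)$ (with $l^2$ replaced by $\C^d$), so that $M_u\colon\H_k(\C^d)\to\H_k$ is $M_u(f_j)_j=\sum_{j=1}^d z_jf_j$, its adjoint is $M_u^*h=(M_{z_j}^*h)_{j}$, and hence $M_uM_u^*=\sum_{j=1}^d M_{z_j}M_{z_j}^*$. The $d$-contraction hypothesis is exactly $\sum_{j=1}^d M_{z_j}M_{z_j}^*\le I$, so $\|M_u\|\le1$; since moreover $u(0)=0$ and $s_w(z)=\frac1{1-\langle z,w\rangle}$ is the Drury--Arveson kernel, a normalized CNP kernel, part (i) (equivalently, Lemma~\ref{general-criterion}) applies and yields $k/s>>0$.

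For (iii) --- where $\Omega=\D$, this being part of the hypothesis of (ii) with $d=1$ --- I would argue in two steps and recombine via the Schur product theorem. Let $\sigma_w(z)=\frac1{1-z\overline{w}}$ be the reproducing kernel of the Hardy space $H^2$, a normalized CNP kernel on $\D$. First, the assumption that $M_z$ is a contraction on $\H_k$ is, by \eqref{general-multiplier-norm} (equivalently, by Lemma~\ref{general-criterion} with the scalar row function $u(z)=z$), precisely the statement $k/\sigma>>0$. Second, write the given $s$ as $s_w(z)=\frac1{1-\sum_n u_n(z)\overline{u_n(w)}}$ with $u_n(z_0)=0$. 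Since the $u_n$ lie in $\H_s$ (they are multipliers of $\H_s$ by Lemma~\ref{CNP}, applied to the constant function $1=s_{z_0}$) and $\H_s$ consists of analytic functions, each $u_n$ is analytic on $\D$, and $s_z(z)=\frac1{1-\|u(z)\|^2}<\infty$ forces $\|u(z)\|<1$; thus $u=(u_n)$ is an analytic $\HB(l^2,\C)$-valued function on $\D$ of supremum norm at most $1$. By the classical identification of the Schur class on the disc with the unit ball of $\Mult(H^2(l^2),H^2)$, $u$ is then a contractive element of $\Mult(\H_\sigma(l^2),\H_\sigma)$, which by Lemma~\ref{general-criterion} means $\sigma/s>>0$. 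Combining, $k=\sigma g_1$ and $\sigma=s g_2$ with $g_1,g_2>>0$, so $k=s\,(g_1g_2)$ and $g_1g_2>>0$ by the Schur product theorem.

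The substance of (iii) therefore lies in its second step --- that every normalized CNP kernel with analytic reproducing kernel space is a factor of the Hardy-space kernel --- and the point to be careful about there is the passage from ``$u$ analytic and contractive on $\D$'' to ``$u$ a contractive multiplier of $H^2$'', i.e.\ the classical description of the disc Schur class. Parts (i) and (ii), together with the recombination in (iii), are then just bookkeeping with Lemma~\ref{general-criterion} and the Schur product theorem.
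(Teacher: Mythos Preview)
Your proof is correct and follows the same architecture as the paper's: parts (i) and (ii) are read off from Lemma~\ref{general-criterion}, and part (iii) is obtained by factoring $k/s=(k/\sigma)(\sigma/s)$ through the Szeg\H{o} kernel $\sigma$ and applying the Schur product theorem. The only difference is in how you verify $\sigma/s>>0$: you invoke the classical identification of the operator-valued Schur class on $\D$ with the closed unit ball of $\Mult(H^2(l^2),H^2)$, whereas the paper argues directly that in $H^2$ one has $\sum_n M_{u_n}M_{u_n}^*\le \sum_n M_{u_n}^*M_{u_n}$ (hyponormality of analytic Toeplitz operators) and that the right-hand side is the Toeplitz operator with symbol $\sum_n|u_n|^2\le 1$. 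Your route is a clean black-box citation; the paper's is a two-line self-contained computation. Either way the content is the same.
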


\begin{proof} (i) and (ii) follow directly from Lemma \ref{general-criterion}.
(iii) Let $s$ be a normalized CNP kernel with $s_w(z)= \frac{1}{1-\sum_{n=1}^\infty u_n(z)\overline{u_n(w)}}$, where
each $u_n$ is analytic. Write $k/s=(k/s_0)(s_0/s)$, where $s_0$ is the Szeg\"o kernel. By the Schur product theorem it suffices to show that $s_0/s>>0$, since $s/s_0 >> 0$ follows from the assumption. This is obviously equivalent to the operator inequality  $$\sum_{n=1}^\infty M_{u_n}M^*_{u_n}\le I $$
in $H^2$. But in this space we have 	
$$\sum_{n=1}^\infty M_{u_n}M^*_{u_n}\le \sum_{n=1}^\infty M_{u_n}^*M_{u_n}$$
and the right hand side is just the Toeplitz operator with symbol $\sum_{n=1}^\infty |u_n|^2\le 1$, i.e.  	
$$\sum_{n=1}^\infty M_{u_n}^*M_{u_n}\le I.$$	
	\end{proof}

Other examples are provided by the following result.
\begin{prop}\label{invariant-subspaces}  Let $s$ be a normalized CNP kernel on $\Omega$ and $k$ be a reproducing kernel with $k/s>>0.$ If $\HM$ is a closed subspace of $\H_k$ which is invariant for $\Mult(\H_s)$, then the reproducing kernel $k^\HM$ of $\HM$ satisfies $k^\HM/s>>0$.
	\end{prop}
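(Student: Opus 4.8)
The plan is to reduce the statement to Lemma~\ref{general-criterion} applied twice. Write $s_w(z) = 1/(1 - u(z)u^*(w))$ with $u(z) \in \HB(l^2,\C)$ the associated row function. Since $\HM$ is a closed subspace of $\H_k$, its reproducing kernel space $\H_{k^\HM}$ is $\HM$ itself with the inherited norm, so the $l^2$-valued space $\H_{k^\HM}(l^2)$ is exactly the subspace $\HM(l^2)$ of $\H_k(l^2)$ consisting of those $F$ all of whose coordinate functions lie in $\HM$, and the row multiplier acts on it by the same pointwise formula $(M_u F)(z) = u(z)F(z)$. Because $k/s >> 0$, Lemma~\ref{general-criterion} tells us that $M_u$ is a contraction from $\H_k(l^2)$ into $\H_k$. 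Hence, if we can show $M_u(\HM(l^2)) \subseteq \HM$, then the restriction $M_u|_{\HM(l^2)}$ is a contraction from $\H_{k^\HM}(l^2)$ into $\H_{k^\HM}$, and a second application of Lemma~\ref{general-criterion}, now with $k^\HM$ in place of $k$, gives $k^\HM/s >> 0$.

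So the one thing to prove is $M_u(\HM(l^2)) \subseteq \HM$. By Lemma~\ref{CNP}(i) each coordinate $u_n$ lies in $\Mult(\H_s)$, and by the second part of Lemma~\ref{general-criterion} it therefore lies in $\Mult(\H_k)$; the hypothesis that $\HM$ is invariant for $\Mult(\H_s)$, acting on $\H_k$ through this contractive embedding, then yields $M_{u_n}\HM \subseteq \HM$ for every $n$. Given $F = (F_n)_n \in \HM(l^2)$, consider the truncations $F^{(N)} = (F_1,\dots,F_N,0,0,\dots)$. Then $M_u F^{(N)} = \sum_{n=1}^N M_{u_n} F_n \in \HM$ since $\HM$ is a linear subspace, while $F^{(N)} \to F$ in $\H_k(l^2)$ and $M_u$ is bounded, so $M_u F^{(N)} \to M_u F$ in $\H_k$; as $\HM$ is closed, $M_u F \in \HM$, as desired.

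I do not expect a genuine obstacle here: the content lies entirely in organizing the two applications of Lemma~\ref{general-criterion} around the elementary observation that an $l^2$-valued subspace invariant under each $M_{u_n}$ is invariant under the row operator $M_u$. The only points requiring a little care are the identifications $\H_{k^\HM} = \HM$ and $\H_{k^\HM}(l^2) = \HM(l^2)$, together with the convention that invariance for $\Mult(\H_s)$ means invariance under the image of $\Mult(\H_s)$ in $\HB(\H_k)$. One could instead argue dually, deducing $M_u^*(\HM^\perp) \subseteq \HM^\perp \otimes l^2$ from the identity $M_u^*(k_w) = (\overline{u_n(w)}\,k_w)_n$ and invariance of $\HM^\perp$ under each $M_{u_n}^*$, but the truncation argument above is shorter.
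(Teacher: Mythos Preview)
Your proof is correct and follows essentially the same approach as the paper: both arguments use Lemma~\ref{general-criterion} to obtain the row contraction $M_u:\H_k(l^2)\to\H_k$, restrict it to $\HM(l^2)\to\HM$ via the $\Mult(\H_s)$-invariance of $\HM$, and then apply Lemma~\ref{general-criterion} a second time with $k^\HM$ in place of $k$. You have simply made explicit (via the truncation argument and the identifications $\H_{k^\HM}=\HM$, $\H_{k^\HM}(l^2)=\HM(l^2)$) what the paper leaves implicit in the passage from the displayed inequality to ``$M_u:\HM(l^2)\to\HM$ is a contraction.''
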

	\begin{proof}
    If  $s_w(z)= \frac{1}{1-\sum_{n=1}^\infty u_n(z)\overline{u_n(w)}}$ then by Lemma \ref{CNP} and Lemma \ref{general-criterion} we have that $$	 \sum_{n=1}^\infty \|u_nh_n\|^2_{\mathcal H_k} \le 	\sum_{n=1}^\infty \|h_n\|^2_{\mathcal H_k}$$
		if  $h_n\in \H_k$, in particular if $h_n\in \HM$. Thus  $M_u:\HM(l^2)\to \HM$ is a contraction and the result follows by another application of Lemma \ref{general-criterion}.
			\end{proof}

\section{Proof of the main result}
Before we give the actual proof we shall discuss briefly some related ideas as well as  the motivation for our approach.
\subsection{(a) $\Rightarrow$ (b) via Leech's  theorem}
As pointed out in the Introduction, this part of  Theorem \ref{factorization theorem}, that is, the representation $F=\frac1{1-\psi}\Phi$, where $\|F\|_{\H_s(\HE)}\le 1$,  $\psi \in \Mult(\H_s)$ with $\psi(z_0)=0$ and   $\Phi\in \Mult(\H_s,\H_k(\HE))$ with
$$\|\psi h\|^2_{\H_s}+ \|\Phi h\|^2_{\H_k(\HE)} \le \|h\|^2_{\H_s} \ \text{ for all }h \in \H_s,$$
can be proved with the method in \cite{AHMcR} and we shall describe briefly this approach.\\
 In \cite{AHMcR} the factorization theorem is proved in the scalar case and when $k=s$ with help of  an appropriate version of  Leech's theorem, which is the implication
(i) $\Rightarrow$ (ii) of \cite[Theorem 8.57]{AgMcC}. This can easily be adapted for normalized CNP kernels. A careful inspection of the argument shows that it extends to the vector-valued case as well. Moreover, even if the approach is not constructive, the method gives  the multiplier-norm estimates when $k=s$.

\
The general case of a kernel $k$ with $k/s>>0$, where  $s$ is  a normalized CNP kernel, can be deduced as follows. Let \be
\label{k=sGG-star}k_w(z)=s_w(z)G(z)G^*(w),\ee
with $G:\Omega\to\HB(\HC,\C)$ and define
$\tilde{G}:\Omega\to \HB(\HC\otimes \HE,\HE)$ by $$\tilde{G}(z)(x\otimes y)=(G(z)x)y.$$
Observe first that $\tilde{G}$ is a contractive multiplier from $\H_s(\HC\otimes \HE)$ into $\H_k(\HE)$.
Indeed,   $\tilde{G}(z)^* y=G(z)^*\otimes y$,  hence from \eqref{k=sGG-star} it follows that the map  defined on a spanning set by $M_{\tilde{G}}^*(k_zy)=s_z (G(z)^*\otimes y)$ extends to an isometric  operator $M_{\tilde{G}}^*: \H_k(\HE) \to\H_s(\HC\otimes \HE)$. Thus $M_{\tilde{G}}^{**}= M_{\tilde{G}}:\H_s(\HC\otimes \HE) \to \H_k(\HE)$ is a contraction which proves the claim.\\
Now let  $F\in \H_k(\HE)$ with $\|F\|_{ \H_k(\HE)}\le 1$, and let $H=M_{\tilde{G}}^*F$. Then $H\in \H_s(\HC\otimes \HE)$,
 $\|H\|_{\H_s(\HC\otimes \HE)}\le 1,$ and $F=M_{\tilde{G}}H$, because $M_{\tilde{G}}^*$ is an isometry. By the previous discussion,
 there are  $\psi \in \Mult(\H_s)$ and $\Gamma\in \Mult(\H_s,\H_s(\HC\otimes \HE))$ such that $\psi(z_0)=0$, $\|\psi h\|^2+\|\Gamma h\|^2\le \|h\|^2$ for every $h\in \H_s$ and $H(z)= \frac{1}{1-\psi(z)}\Gamma(z)$, which gives $F(z)=\frac{1}{1-\psi(z)}\Phi(z)$
with
 \be
 \label{phi=GGam}\Phi(z)=\tilde{G}(z)\Gamma(z),\ee
and the result follows from the fact that $\tilde{G}$ is a contractive multiplier from $\H_s(\HC\otimes \HE)$ into $\H_k(\HE)$.

\
As the  following result shows, this reasoning leads to additional information about the factorization in Theorem \ref{factorization theorem}. For simplicity, we shall consider only the scalar-valued case.
\begin{corollary}\label{phi=GGam-appl} Let $s$ be a normalized CNP kernel on $\Omega$ and let  $k=sg$, with $$g_w(z)=\sum_{n=0}^\infty g_n(z)\overline{g}_n(w).$$
 A function $f$ belongs to the unit ball of $\H_k$ if and only if there exist $\psi,\varphi_n \in \Mult(\H_s),~n\ge 1$,  with $\psi(z_0)=0$ and  $$\|\psi h\|_{\H_s}^2+\sum_n\|\varphi_n h\|_{\H_s}^2\le \|h\|_{\H_s}^2,\quad h\in \H_s,$$
such that $$f=\frac{\sum_ng_n\varphi_n}{1-\psi}.$$ \end{corollary}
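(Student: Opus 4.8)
The plan is to reduce this to Theorem~\ref{factorization theorem} by specializing the discussion preceding the corollary to the scalar‐valued case $\HE=\C$. Write $G\colon\Omega\to\HB(l^2,\C)$ for the row‐operator function $G(z)x=\sum_n g_n(z)x_n$ (for $x=(x_n)_n\in l^2$); since $g$ is positive definite its diagonal $g_z(z)=\sum_n|g_n(z)|^2$ is finite, so $G(z)$ is a bounded operator and $G(z)G(w)^*=g_w(z)$, whence $k_w(z)=s_w(z)G(z)G(w)^*$. As in the paragraph containing \eqref{k=sGG-star}, taken with $\HE=\C$ so that $\tilde{G}=G$, the assignment $k_z\mapsto s_zG(z)^*$ extends to an \emph{isometry} $M_G^*\colon\H_k\to\H_s(l^2)$; consequently $M_G\colon\H_s(l^2)\to\H_k$ is a coisometry, in particular a contractive multiplier with $M_GM_G^*=I_{\H_k}$.

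For the forward implication, let $f$ lie in the unit ball of $\H_k$ and put $H=M_G^*f\in\H_s(l^2)$, so that $\|H\|_{\H_s(l^2)}\le1$ and $f=M_GH$. Applying Theorem~\ref{factorization theorem}(i) with the kernel taken to be $s$ (i.e.\ $g\equiv1$) and with coefficient space $l^2$ in place of $\HE$, we obtain $\psi\in\Mult(\H_s)$ with $\psi(z_0)=0$ and $\Gamma\in\Mult(\H_s,\H_s(l^2))$ with $\|\psi h\|_{\H_s}^2+\|\Gamma h\|_{\H_s(l^2)}^2\le\|h\|_{\H_s}^2$ for all $h\in\H_s$, such that $H=\frac1{1-\psi}\Gamma$. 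Writing $\Gamma(z)=(\varphi_n(z))_n$ and composing $M_\Gamma$ with the coordinate projections $l^2\to\C$ (which are contractive multipliers) shows that each $\varphi_n\in\Mult(\H_s)$ and that $\|\Gamma h\|_{\H_s(l^2)}^2=\sum_n\|\varphi_n h\|_{\H_s}^2$; the norm estimate thus becomes $\|\psi h\|^2+\sum_n\|\varphi_n h\|^2\le\|h\|^2$, and $f(z)=G(z)H(z)=\frac1{1-\psi(z)}\sum_n g_n(z)\varphi_n(z)$.

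For the converse, suppose $\psi$ and $\varphi_n$ are as in the statement. Set $\Gamma(z)=(\varphi_n(z))_n$; the hypothesis $\sum_n\|\varphi_n h\|_{\H_s}^2<\infty$ shows that $\Gamma h\in\H_s(l^2)$ for every $h\in\H_s$, with $\|\Gamma h\|_{\H_s(l^2)}^2=\sum_n\|\varphi_n h\|_{\H_s}^2$, so $\Gamma\in\Mult(\H_s,\H_s(l^2))$ and $\|\psi h\|^2+\|\Gamma h\|^2\le\|h\|^2$. By the implication (b)$\Rightarrow$(a) of Theorem~\ref{factorization theorem}(i), again with kernel $s$ and coefficient space $l^2$, the function $H:=\frac1{1-\psi}\Gamma$ lies in $\H_s(l^2)$ with $\|H\|_{\H_s(l^2)}\le1$. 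Then $f:=M_GH\in\H_k$ satisfies $\|f\|_{\H_k}\le\|M_G\|\,\|H\|\le1$ and $f(z)=G(z)H(z)=\frac{\sum_n g_n(z)\varphi_n(z)}{1-\psi(z)}$, which is the asserted factorization.

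There is no real obstacle: both directions pass through Theorem~\ref{factorization theorem} together with the elementary mapping properties of $M_G$. The one point that deserves a moment of care is that $M_G^*$ is an isometry (equivalently $M_GM_G^*=I_{\H_k}$), which is where the factorization $k=sg$ — rather than the weaker $k/s>>0$ — is genuinely used, and this is exactly what is recorded in the discussion around \eqref{k=sGG-star}. One should also note that the passage between a single $\Gamma\in\Mult(\H_s,\H_s(l^2))$ and its sequence of coordinate functions $(\varphi_n)_n\subset\Mult(\H_s)$ is reversible precisely because $\|\Gamma h\|_{\H_s(l^2)}^2=\sum_n\|\varphi_n h\|_{\H_s}^2$, which is what makes the two forms of the norm condition equivalent.
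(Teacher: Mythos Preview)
Your proof is correct and follows essentially the same route as the paper: specialize the discussion around \eqref{k=sGG-star} to $\HE=\C$ (so $\tilde G=G$ and $\HC\otimes\HE=l^2$), use that $M_G^*$ is an isometry to transfer to $\H_s(l^2)$, and then invoke Theorem~\ref{factorization theorem}(i) with kernel $s$ and coefficient space $l^2$, reading off $\Phi=G\Gamma=\sum_n g_n\varphi_n$ from \eqref{phi=GGam}. The paper's proof is just a terse pointer to exactly this argument; you have additionally spelled out the converse direction via (b)$\Rightarrow$(a) and the contractivity of $M_G$, which the paper leaves implicit.
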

\begin{proof} This is a direct application of
 Theorem  \ref{factorization theorem} (i) together with the equality \eqref{phi=GGam}.  Here $\HE=\C$ and $\Gamma$ becomes an element of $\Mult(\H_s,\H_s(l^2))$.\end{proof}
Some concrete examples of this type are discussed in  Section 4.

\subsection{The constructive approach}		
As already mentioned, our approach is different and it is based on an idea of Sarason \cite{Sarason1,Sarason2} which was further developed in \cite{GrRiSu}. Here is a short  motivation for it. Recall from Lemma \ref{general-criterion} that for each $z\in \Omega$, $s_z\in \Mult(\H_k(\HE))$. Therefore,  for  $F\in \H_k(\HE)$, the Sarason function of $F$   \be
\label{V-F} V_F(z) =2\langle F, s_zF\rangle_{\H_k(\HE)}-\|F\|_{\H_k(\HE)}^2,~z\in \Omega,\ee
is well defined.
A straightforward computation shows that if $s$ is the Szeg\"o kernel and $k=s$ then $\text{Re }V_F$ is just the Poisson integral of
$\|F\|_{\HE}^2$, hence  it satisfies $$0\le \|F(z)\|_{\HE}^2\le \text{Re }V_F(z).$$
It is the least harmonic majorant of $\|F\|_\HE$.
 A remarkable fact proved in \cite[Section 2]{GrRiSu}   is that this inequality continues to hold for arbitrary normalized CNP kernels $s$, more precisely
\be
\label{Harmonic Majorant}0\le \|F(z)\|_{\HE}^2\le \frac{\|s_z F\|_{\H_s(\HE)}^2}{\|s_z\|^2}\le  \text{Re }V_F(z), \ee
in particular, $\text{Re }V_F$ is  a majorant of $\|F\|_\HE$ as well. Moreover,  if $\Omega\subset\C^d$,  $\H_s$ consists of analytic functions, $V_F$ is  harmonic when $d=1$, or  pluriharmonic when $d>1$.
Now assume that $\|F\|_{\H_s(\HE)}=1$, set  $\psi=\frac{V_F-1}{V_F+1},\quad \Phi=\frac{2}{V_F+1} F$. Then
$\psi(z_0)=0$, $F=\frac1{1-\psi}\Phi$ and
 a straightforward computation shows that  	
$$|\psi(z)|^2 +\|\Phi(z)\|_{\HE}^2= \frac{|V_F(z)|^2-2\text{ Re }V_F(z) + 1 + 4 \|F(z)\|_{\HE}^2}{|V_F+1|^2}\le 1.$$
For example, if $\HE=\C$ and $M(\H_s)=H^\infty$ with equality of norms, then this proves part of our main theorem, and in fact for a single function $f\in H^2$ this  proof was given by  Sarason in \cite{Sarason1,Sarason2}.
In the general case considered here, pointwise estimates as above cannot lead to a proof of the main theorem. However, the intuition behind our approach is the argument outlined here.
\subsection{The proof of Theorem \ref{factorization theorem}}
For the remainder of this section, let $k$ and $s$ be reproducing kernels on $\Omega$ as
in the statement of Theorem \ref{factorization theorem}, i.e.\ $s$ is a CNP kernel,
normalized at $z_0$, and $k = s g$ with $g >>0$.
The key step is the following far-reaching generalization of the inequality \eqref{Harmonic Majorant}.
\begin{lemma}\label{main-lemma-sarason}  Let $F \in \H_k(\HE)$. Then
	$$ \langle s_w F,F\rangle_{\H_k(\HE)} +\langle F, s_z F\rangle_{\H_k(\HE)} -\|F\|_{\H_k(\HE)}^2- \frac{\langle s_w F,s_z F\rangle_{\H_k(\HE)}}{s_w(z)} >> 0.$$
\end{lemma}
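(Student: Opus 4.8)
The plan is to recognize the expression as a sum of inner products against a single positive kernel, and then to express it via a multiplication operator that we already know is a contraction. First I would use the factorization $k = sg$ and the identity
$(1 - u(z)u^*(w))k_w(z) = k_w(z)/s_w(z)$
from the proof of Lemma~\ref{general-criterion}: since $s$ is a normalized CNP kernel with $s_w(z) = 1/(1 - u(z)u^*(w))$, Lemma~\ref{general-criterion} tells us that $M_u \colon \H_k(l^2) \to \H_k$ is a contraction, and this is the operator I expect to carry the whole estimate. The quantity $\frac{\langle s_w F, s_z F\rangle}{s_w(z)}$ is the one that needs unpacking: writing $s_w(z) = 1/(1 - u(z)u^*(w))$ turns division by $s_w(z)$ into multiplication by $1 - u(z)u^*(w)$, which is exactly the kind of thing that produces $M_u M_u^*$ terms.

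The key step is to rewrite the claimed kernel in the language of inner products of the vectors $s_z F \in \H_k(\HE)$. Recall $s_z \in \Mult(\H_k(\HE))$ by Lemma~\ref{general-criterion}, so $M_{s_z} F = s_z F$ makes sense. I would set $G_z := s_z F$ and $G_{z_0} = s_{z_0} F = F$ (using $s_{z_0} = 1$), so that the first three terms are $\langle G_{z_0}, G_w\rangle + \langle G_z, G_{z_0}\rangle - \langle G_{z_0}, G_{z_0}\rangle$ — wait, more carefully: $\langle s_w F, F\rangle + \langle F, s_z F\rangle - \|F\|^2 = \langle G_w, G_{z_0}\rangle + \langle G_{z_0}, G_z\rangle - \langle G_{z_0}, G_{z_0}\rangle$. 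Hmm, the indices need care, but the point is that the full expression should collapse to something of the form $\langle G_{z_0} - a_z, G_{z_0} - a_w\rangle$ minus a correction, or — more likely given the shape — to $\langle (I - M_u M_u^*) \text{(something involving } G_z, G_w), \cdot\rangle$. The cleanest route: show that for any finitely supported choice of points $w_i$ and scalars (or vectors) $c_i$, the double sum $\sum_{i,j} \overline{c_j} c_i \big[ \langle s_{w_i} F, s_{w_j} F\rangle (1 - u(w_j) u^*(w_i)) \cdots \big]$ equals $\| \sum_i c_i (\text{something})\|^2 \geq 0$, where the "something" lives in $\H_k(\HE)$ or $\H_k(l^2 \otimes \HE)$. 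Concretely I would guess the positive combination is $\big\| (I - \text{proj}) \sum_i c_i s_{w_i} F \big\|^2$ or uses $P_0$ from Lemma~\ref{CNP}(ii), interpreting $1 - u u^*$ via that projection identity.

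I expect the main obstacle to be bookkeeping: correctly tracking which variable ($z$ or $w$) is holomorphic versus anti-holomorphic in each term, and verifying that the cross terms combine exactly — in particular that the term $\langle s_w F, s_z F\rangle / s_w(z) = \langle s_w F, s_z F\rangle - \langle s_w F, s_z F\rangle u(z)u^*(w)$ produces, after summing against $c_i, \bar c_j$, precisely the defect $\sum_{i,j} \bar c_j c_i \langle M_u^* (s_{w_i} F), M_u^*(s_{w_j} F)\rangle$ needed to realize $I - M_u M_u^*$ acting on the vector $\sum_i c_i s_{w_i} F$, together with whatever rank-one piece $P_0$ contributes (and checking that piece is consistent with the $-\|F\|^2$ and the two linear-in-$F$ terms). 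Once the expression is identified as $\big\langle (I - M_u M_u^*)\, \xi, \xi \big\rangle$ for $\xi = \sum_i c_i s_{w_i} F$, or as $\|M_u^* \xi\|^2$ subtracted appropriately, positivity is immediate from $\|M_u\| \le 1$; the real work is the algebraic identification, which I would do by expanding both sides on the dense set of reproducing kernels and matching coefficients, using \eqref{multiplieraction} to compute $M_{s_z}^* (k_w x)$ and $M_u^*(k_w x)$ explicitly.
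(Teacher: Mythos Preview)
Your overall strategy---rewrite $1/s_w(z)$ as $1-u(z)u^*(w)$ and reduce positivity to the contractivity of $M_u$ on $\H_k$---is the paper's, and the algebraic identity you anticipate,
\[
\text{(expression)} \;=\; u(z)u^*(w)\,\langle s_w F, s_z F\rangle \;-\; \langle (s_w-1)F,(s_z-1)F\rangle,
\]
is precisely the paper's first step. But the operator identification you propose has a real gap. You want the summed expression to equal $\langle (I - M_u M_u^*)\xi,\xi\rangle$ with $\xi = \sum_i c_i s_{w_i} F \in \H_k(\HE)$, and in particular you assert that summing $u(z)u^*(w)\langle s_w F, s_z F\rangle$ against $c_i\bar c_j$ yields $\|M_u^*\xi\|^2$. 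That would require the $n$-th component of $M_u^*|_{\H_k(\HE)}(s_wF)$ to be $\overline{u_n(w)}\,s_wF$, i.e.\ that $s_wF$ behave under $M_{u_n}^*$ like a reproducing kernel. It does not: $s_wF$ is not a kernel vector in $\H_k(\HE)$, and the eigenvector relation $M_{u_n}^* s_w = \overline{u_n(w)}\,s_w$ holds in $\H_s$, not in $\H_k$. So the claimed equality $\sum_{i,j}c_i\bar c_j\,u(z_j)u^*(w_i)\langle s_{w_i}F,s_{w_j}F\rangle = \|M_u^*\xi\|^2$ is false in general.

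The fix, and the paper's actual maneuver, is to let $M_u^*$ act on $h=\sum_i c_i s_{w_i}$ \emph{in $\H_s$} first: set $h_n := M_{u_n}^*|_{\H_s} h = \sum_i c_i\,\overline{u_n(w_i)}\,s_{w_i}$, use Lemma~\ref{CNP}(ii) to obtain the pointwise identity $h - h(z_0)=\sum_n u_n h_n$, and only then multiply by $F$. The summed expression becomes
\[
\sum_n\|h_nF\|_{\H_k(\HE)}^2 \;-\; \|(h-h(z_0))F\|_{\H_k(\HE)}^2,
\]
and since $(h-h(z_0))F=\sum_n u_n(h_nF)$ as functions, row-contractivity of $M_u$ on $\H_k(\HE)$ (Lemma~\ref{general-criterion}) finishes the argument. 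The point is that the adjoint must act on the $\H_s$-side multiplier $h$, not on the $\H_k(\HE)$-side product $hF$; your instinct to invoke $P_0$ from Lemma~\ref{CNP}(ii) was right, but it lives in $\H_s$ and must be used there.
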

\begin{proof} Let $s_w(z)= \frac{1}{1-u_w(z)}$ with $u_w(z)=\sum_n \overline{u_n(w)}u_n(z)$, $u_n(z_0)=0$ for all $n$.
	Then $\frac{1}{s_w(z)}=1-u_w(z)$ and hence
	\begin{align}\label{crucial step-pos}  \langle s_w F,F\rangle_{\H_k(\HE)} &+\langle F, s_z F\rangle_{\H_k(\HE)} -\|F\|_{\H_k(\HE)}^2- \frac{\langle s_w F,s_z F\rangle_{\H_k(\HE)}}{s_w(z)}
	\\
	&\nonumber= u_w(z) \langle s_w F,s_z F\rangle_{\H_k(\HE)} - \langle (s_w-1)F,(s_z-1)F\rangle_{\H_k(\HE)}.
	\end{align}
	
	Let $z_1,...,z_k\in \Omega$, $a_1,...,a_k\in \C$,  set $h=\sum_{i=1}^k a_is_{z_i}$, and $h_n= \sum_i a_i \overline{u_n(z_i)}s_{z_i}$ for $n=1,2, ...$. Then $h,h_n\in  \Mult(\H_s)\subset  \Mult(\H_k(\HE))$,	
	\begin{align*} \sum_n\|h_nF\|_{\H_k(\HE)}^2&=\sum_n \|\sum_i a_i \overline{u_n(z_i)}s_{z_i}F\|_{\H_k(\HE)}^2\\ &=\sum_{i,j} a_i\overline{a}_j u_{z_i}(z_j) \langle s_{z_i}F,s_{z_j}F\rangle_{\H_k(\HE)}. \end{align*}
Moreover,  since  $h-h(z_0)=\sum_i a_i (s_{z_i}-1)$,	
$$\|(h-h(z_0))F\|_{\H_k(\HE)}^2 =\sum_{i,j} a_i\overline{a}_j \langle (s_{z_i}-1)F,(s_{z_j}-1)F\rangle_{\H_k(\HE)}.$$
 	By Lemma \ref{CNP} (ii) we have $h-h(z_0)=\sum_n M_{u_n} M_{u_n}^*h= \sum_n u_n  h_n$
	and hence by Lemma \ref{general-criterion}
	$$\|(h-h(z_0))F\|_{\H_k(\HE)}^2 = \|\sum_n u_n(h_nF)\|_{\H_k(\HE)}^2\le \sum_{n}\|h_n F\|_{\H_k(\HE)}^2.$$
	Thus, by \eqref{crucial step-pos}
	\begin{align*} \sum_{i,j}  a_i\overline{a}_j &\left(\langle s_{z_i} F,F\rangle_{\H_k(\HE)} +\langle F, s_{z_j} F\rangle_{\H_k(\HE)}
 -\|F\|_{\H_k(\HE)}^2- \frac{\langle s_{z_i} F,s_{z_j} F\rangle_{\H_k(\HE)}}{s_{z_i}({z_j})}\right)\\
	&= \sum_{i,j}  a_i\overline{a}_j \left( u_{z_i}(z_j) \langle s_{z_i} F,s_{z_j} F\rangle_{\H_k(\HE)} - \langle (s_{z_i}-1)F,(s_{z_j}-1)F\rangle_{\H_k(\HE)}\right)\\
	&= \sum_n\|h_nF\|_{\H_k(\HE)}^2 - \|(h-h(z_0))F\|_{\H_k(\HE)}^2 \ge 0.
	\end{align*}\end{proof}

An immediate application of the lemma yields  the  general version of the inequality \eqref{Harmonic Majorant}.

\begin{corollary}\label{gen Harmonic Majorant}  Let $s,k$ be reproducing kernels on the nonempty set $\Omega$ such that $s$ is a normalized CNP kernel and  $k=sg$ with $g>>0$. 	
Let $F \in \H_k(\HE)$  and let $V_F$ be the Sarason function of $F$  given by \eqref{V-F}.
Then
$$\frac{s_z(z)}{k_z(z)}\|F(z)\|_{\HE}^2\le \text{\rm Re }V_F(z).$$
 \end{corollary}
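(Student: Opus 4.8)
The plan is to deduce Corollary~\ref{gen Harmonic Majorant} from Lemma~\ref{main-lemma-sarason} by specializing the positive definite kernel in the lemma to the diagonal $w = z$. First I would set $w = z$ in the inequality
$$ \langle s_w F,F\rangle_{\H_k(\HE)} +\langle F, s_z F\rangle_{\H_k(\HE)} -\|F\|_{\H_k(\HE)}^2- \frac{\langle s_w F,s_z F\rangle_{\H_k(\HE)}}{s_w(z)} >> 0,$$
which gives, for the single point $z$, the scalar inequality
$$ 2\,\Re\langle F, s_z F\rangle_{\H_k(\HE)} -\|F\|_{\H_k(\HE)}^2- \frac{\|s_z F\|_{\H_k(\HE)}^2}{s_z(z)} \ge 0,$$
since $\langle s_z F,F\rangle = \overline{\langle F, s_z F\rangle}$. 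The left-hand side is exactly $\Re V_F(z) - \frac{\|s_z F\|^2_{\H_k(\HE)}}{s_z(z)}$ by the definition \eqref{V-F} of the Sarason function, so we obtain
$$ \Re V_F(z) \ge \frac{\|s_z F\|_{\H_k(\HE)}^2}{s_z(z)}.$$

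Next I would show $\frac{\|s_z F\|_{\H_k(\HE)}^2}{s_z(z)} \ge \frac{s_z(z)}{k_z(z)}\|F(z)\|_{\HE}^2$, which completes the chain. Fix $z\in\Omega$ and a unit vector $x\in\HE$. Since $M_{s_z}\in\HB(\H_k(\HE))$ by Lemma~\ref{general-criterion}, the reproducing property gives $\langle s_z F, k_z x\rangle_{\H_k(\HE)} = \langle (s_z F)(z), x\rangle_\HE = s_z(z)\langle F(z), x\rangle_\HE$. Applying Cauchy--Schwarz,
$$ s_z(z)^2 |\langle F(z),x\rangle_\HE|^2 = |\langle s_z F, k_z x\rangle_{\H_k(\HE)}|^2 \le \|s_z F\|_{\H_k(\HE)}^2 \, \|k_z x\|_{\H_k(\HE)}^2 = \|s_z F\|_{\H_k(\HE)}^2 \, k_z(z).$$
Taking the supremum over unit vectors $x$ yields $s_z(z)^2 \|F(z)\|_\HE^2 \le \|s_z F\|_{\H_k(\HE)}^2 \, k_z(z)$, i.e. $\frac{\|s_z F\|_{\H_k(\HE)}^2}{s_z(z)} \ge \frac{s_z(z)}{k_z(z)}\|F(z)\|_\HE^2$, as claimed. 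Combining with the previous display proves the corollary.

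I do not expect a genuine obstacle here; the corollary is, as the text says, an immediate application. The one point to be slightly careful about is the passage from the operator-positivity statement $(\,\cdot\,) >> 0$ of Lemma~\ref{main-lemma-sarason} to the numerical inequality at a single point: this is just the $n=1$ case of positive-definiteness (with scalar $a_1 = 1$), and one should note that the diagonal entry of a positive definite kernel is nonnegative and real, which forces the expression $2\Re\langle F, s_z F\rangle - \|F\|^2 - \|s_z F\|^2/s_z(z)$ to be $\ge 0$. The other mild check is that $s_z(z) > 0$ and $k_z(z) > 0$ so that the divisions are legitimate; positivity of $s$ gives $s_z(z) > 0$ unless $F$ is supported where the kernel degenerates, and $k_z(z) = s_z(z) g_z(z) > 0$ as well (if $k_z(z)=0$ then $F(z)=0$ and the inequality is trivial). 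Everything else is the routine Cauchy--Schwarz argument above.
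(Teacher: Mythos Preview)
Your proof is correct and follows exactly the same two-step approach as the paper: specialize Lemma~\ref{main-lemma-sarason} to the diagonal $w=z$ to obtain $\Re V_F(z)\ge \|s_z F\|_{\H_k(\HE)}^2/s_z(z)$, then invoke the Cauchy--Schwarz estimate $s_z(z)^2\|F(z)\|_\HE^2\le k_z(z)\|s_z F\|_{\H_k(\HE)}^2$. The paper simply calls the second step a ``standard estimate'' without writing out the Cauchy--Schwarz argument you supply.
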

\begin{proof}  Apply Lemma \ref{main-lemma-sarason} to obtain
$$\frac{V_F(z)+\overline{V_F}(w)}{2}- \frac{\langle s_w F,s_z F\rangle_{\H_k(\HE)}}{s_w(z)}>>0.$$
For $z=w\in\Omega$ this gives
$$\frac{\| s_z F\|_{\H_k(\HE)}^2}{s_z(z)}\le \text{Re}V_F(z),$$
and the standard estimate $$
 s_z(z)^2\|F(z)\|_{\HE}^2\le k_z(z) \| s_z F\|_{\H_k}^2,$$
gives  the inequality in the statement.
	\end{proof}

Our next step relates the positivity result in Lemma \ref{main-lemma-sarason} to the multiplier-norm estimates in Theorem \ref{factorization theorem}

\begin{lemma} \label{|f|^2<=Re Phi}   Let  $f_1,\ldots,  f_N\in  \H_k$ be finite linear combinations of reproducing kernels in $\H_k$, let $\{e_n\}$ be an orthonormal basis in $\HE$ and set
$$F(z)=\sum_{n=1}^Nf_n(z)e_n.$$
Then $F\in \Mult(\H_s,\H_k(\HE))$  and its Sarason function  $V_F$  belongs to $\Mult(\H_s)$.  Moreover, if $h\in \H_s$  then
	 \be
	 \label{sarason-step1} \|hF\|_{\H_k(\HE)}^2\le \text{\rm  Re } \langle V_F  h,h\rangle_{\H_s},\ee
and if $a\in\C,~\text{\rm Re }a>0$,
\be
\label{sarason-step2} \|(V_F-a)h\|_{\H_s}^2+4 \Re a \, \|hF\|_{\H_k(\HE)}^2\le \|(V_F+\overline{a})h\|_{\H_s}^2.\ee
\end{lemma}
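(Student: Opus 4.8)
The plan is to identify the Sarason function $V_F$ explicitly and then to extract both \eqref{sarason-step1} and \eqref{sarason-step2} from the positivity in Lemma \ref{main-lemma-sarason} by a single application of the Schur product theorem. \emph{Step 1: $V_F$ is a finite linear combination of reproducing kernels of $\H_s$, hence lies in $\Mult(\H_s)$.} Writing each $f_n=\sum_{i=1}^M a_{n,i}k_{w_i}$ with a common finite set $w_1,\dots,w_M$, and using that $s_z\in\Mult(\H_k)$ with $M_{s_z}^*k_w=\overline{s_z(w)}k_w$ (Lemma \ref{general-criterion} together with \eqref{multiplieraction}), a direct computation gives
$$\langle F,s_zF\rangle_{\H_k(\HE)}=\sum_n\langle f_n,M_{s_z}f_n\rangle_{\H_k}=\sum_{i=1}^M\Big(\sum_{n,j}a_{n,i}\overline{a_{n,j}}\,k(w_j,w_i)\Big)\,s_{w_i}(z),$$
so $V_F=\sum_i b_i s_{w_i}-\|F\|^2_{\H_k(\HE)}\,s_{z_0}$ for explicit scalars $b_i$. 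Each kernel $s_w$ is a multiplier of $\H_s$: since $s_w=(1-\varphi_w)^{-1}$ with $\varphi_w(z)=u(z)u^*(w)$, Lemma \ref{CNP}(i) applied with $h_n=\overline{u_n(w)}h$ gives $\varphi_w\in\Mult(\H_s)$ with $\|M_{\varphi_w}\|\le\|u(w)\|<1$, so $s_w\in\Mult(\H_s)$ by a Neumann series. Hence $V_F\in\Mult(\H_s)$.

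\emph{Step 2: proving \eqref{sarason-step1} on a dense set.} Using the identity $\langle s_wF,F\rangle_{\H_k(\HE)}+\langle F,s_zF\rangle_{\H_k(\HE)}-\|F\|^2_{\H_k(\HE)}=\tfrac12\big(V_F(z)+\overline{V_F(w)}\big)$, Lemma \ref{main-lemma-sarason} asserts that
$$\kappa_w(z)=\tfrac12\big(V_F(z)+\overline{V_F(w)}\big)-\frac{\langle s_wF,s_zF\rangle_{\H_k(\HE)}}{s_w(z)}$$
is a positive definite kernel. I would multiply $\kappa$ by the positive kernel $s$; by the Schur product theorem the kernel $\kappa_w(z)s_w(z)=\tfrac12\big(V_F(z)+\overline{V_F(w)}\big)s_w(z)-\langle s_wF,s_zF\rangle_{\H_k(\HE)}$ is again positive definite. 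Testing this against a finite tuple $z_1,\dots,z_\ell$ with scalars $a_1,\dots,a_\ell$ and setting $h=\sum_i a_i s_{z_i}$, the first term collapses, using Step 1 to evaluate $M_{V_F}$ on the kernels $s_{z_i}$, to $\Re\langle V_Fh,h\rangle_{\H_s}$, and the second collapses to $\|hF\|^2_{\H_k(\HE)}$. This is exactly \eqref{sarason-step1} for $h$ ranging over the dense set of finite linear combinations of kernels of $\H_s$.

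\emph{Step 3: conclusions.} On the dense set, \eqref{sarason-step1} gives $\|hF\|^2_{\H_k(\HE)}\le|\langle V_Fh,h\rangle_{\H_s}|\le\|M_{V_F}\|\,\|h\|^2_{\H_s}$, so $h\mapsto hF$ extends to a bounded operator $\H_s\to\H_k(\HE)$; a standard limiting argument identifies the extension with $M_F$, whence $F\in\Mult(\H_s,\H_k(\HE))$. Both sides of \eqref{sarason-step1} are then continuous in $h\in\H_s$, so the inequality holds for all $h$. Finally, \eqref{sarason-step2} is purely algebraic: expanding the inner products yields $\|(V_F+\overline a)h\|^2_{\H_s}-\|(V_F-a)h\|^2_{\H_s}=4\,\Re a\cdot\Re\langle V_Fh,h\rangle_{\H_s}$, and substituting \eqref{sarason-step1} (recall $\Re a>0$) completes the proof.

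The conceptual heart — and the step most easily missed — is the Schur-product manipulation in Step 2: it is not evident beforehand that multiplying the kernel of Lemma \ref{main-lemma-sarason} by $s$ simultaneously clears the denominator $1/s_w(z)$ and reorganizes the surviving terms into precisely $\Re\langle V_Fh,h\rangle_{\H_s}$ and $\|hF\|^2_{\H_k(\HE)}$. Carrying this out cleanly relies on the bookkeeping of Step 1 (so that $M_{V_F}$ acts on the kernels $s_{z_i}$ by the explicit eigenvalue-type formula $M_{V_F}^*s_{z_i}=\overline{V_F(z_i)}s_{z_i}$) and on careful tracking of complex conjugates when passing between $\langle V_Fh,h\rangle_{\H_s}$, $\langle h,V_Fh\rangle_{\H_s}$ and the kernel values $s_{z_i}(z_j)$.
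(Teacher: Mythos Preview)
Your proof is correct and follows essentially the same route as the paper's: both identify $V_F$ as a finite linear combination of kernels $s_{w_i}$ (hence in $\Mult(\H_s)$), multiply the positive kernel of Lemma \ref{main-lemma-sarason} by $s$ via the Schur product theorem to obtain \eqref{sarason-step1} on the dense span of kernels, deduce $F\in\Mult(\H_s,\H_k(\HE))$ and extend by continuity, and finally observe that \eqref{sarason-step2} is an algebraic rewriting of \eqref{sarason-step1}. Your Step 1 supplies the Neumann-series justification for $s_w\in\Mult(\H_s)$ that the paper leaves implicit, but otherwise the arguments are the same.
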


\begin{proof} When $F$ has the special form given in the  statement, its Sarason function can be written as
$$V_F(z)=2\sum_{i,j=1}^mc_{ij}k_{z_i}(z_j)s_{z_i}(z)-\|F\|_{\H_k(\HE)}^2$$
for a suitable choice of scalars $c_{i j}$,
which implies that $V_F\in \Mult(\H_s)$. If $h= \sum_i a_i s_{z_i}$, then  $h\in \Mult(\H_k)$  and \eqref{sarason-step1} is equivalent to
\begin{align*}\sum_{i,j}&a_i\overline{a}_j\langle s_{z_i}F,s_{z_j}F\rangle_{\H_k(\HE)} \le \frac1{2}\sum_{i,j}a_i\overline{a}_js_{z_i}(z_j)
(V_F(z_j)+\overline{V_F(z_i)})\\&
=\sum_{i,j}a_i\overline{a}_js_{z_i}(z_j)(\langle F, s_{z_j}F\rangle_{\H_k(\HE)}+ \langle s_{z_i} F,F\rangle_{\H_k(\HE)}-\|F\|_{\H_k(\HE)}^2)
\end{align*}
and the inequality follows by an application of Lemma \ref{main-lemma-sarason} and the Schur product theorem.  Moreover, \eqref{sarason-step2} is just a reformulation of \eqref{sarason-step1}, since  $$ \|(V_F+\overline{a})h\|_{\H_s}^2- \|(V_F-a)h\|_{\H_s}^2=
4 \Re a \Re \langle V_F  h,h\rangle_{\H_s}. $$ Finally, \eqref{sarason-step1} together with the fact that finite linear combinations of reproducing kernels in $\H_s$ are dense in $\H_s$,
shows that  $F\in \Mult(\H_s,\H_k(\HE))$, and that both inequalities hold for arbitrary $h\in \H_s$. \end{proof}

We can now prove the factorization   in Theorem \ref{factorization theorem} (ii) in a slightly more general form, which turns out to be useful in applications.

\begin{prop}\label{a-factorization} Let $F\in \H_k(\HE)$  and let $a\in \C$ with $\text{\rm Re }a>0$. If $V_F$ is the Sarason function of $F$, $$\psi_a=\frac{V_F-a}{V_F+\overline{a}},\quad \Phi_a=\frac{2}{V_F+\overline{a}}F,$$
then $F=\frac{\text{\rm Re }a}{1-\psi_a}\Phi_a$, $\psi_a\in \Mult(\H_s),~\Phi_a\in \Mult(\H_s, \H_k(\HE))$ with
$$\|\psi_ah\|_{\H_s}^2+\text{\rm Re }a \|\Phi_ah\|_{ \H_k(\HE)}^2  \le\|h\|_{\H_s}^2,\quad h\in \H_s.$$
\end{prop}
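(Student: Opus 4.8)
The plan is to prove the proposition first for $F$ of the special form appearing in Lemma \ref{|f|^2<=Re Phi} — that is, $F = \sum_{n=1}^N f_n e_n$ with $\{e_n\}$ an orthonormal basis of $\HE$ and each $f_n$ a finite linear combination of reproducing kernels of $\H_k$ — and then to obtain the general case by approximating an arbitrary $F\in\H_k(\HE)$ in norm by such functions, which is possible because $k_z x = \sum_n \langle x,e_n\rangle_\HE\, k_z e_n$, so finite linear combinations of these functions are dense in $\H_k(\HE)$.

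In the special case, Lemma \ref{|f|^2<=Re Phi} already gives $V_F\in\Mult(\H_s)$, $F\in\Mult(\H_s,\H_k(\HE))$, and via \eqref{sarason-step1} the operator inequality $M_{V_F}+M_{V_F}^*\ge 0$; hence $M_{V_F+\overline a}+M_{V_F+\overline a}^*\ge 2(\Re a)\,I$, so $M_{V_F+\overline a}$ is invertible with inverse of norm at most $1/\Re a$. Since the adjoint of a multiplier has every reproducing kernel as an eigenvector, so does the adjoint of this inverse, whence $\tfrac{1}{V_F+\overline a}\in\Mult(\H_s)$; in particular $V_F(z)+\overline a\ne 0$ on $\Omega$ (as also follows from $\Re V_F\ge 0$, Corollary \ref{gen Harmonic Majorant}). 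It follows that $\psi_a=(V_F-a)\tfrac{1}{V_F+\overline a}\in\Mult(\H_s)$ and $\Phi_a=\tfrac{2}{V_F+\overline a}F=M_F M_{2/(V_F+\overline a)}\in\Mult(\H_s,\H_k(\HE))$, while the identity $F=\tfrac{\Re a}{1-\psi_a}\Phi_a$ is the pointwise computation $1-\psi_a=\tfrac{2\Re a}{V_F+\overline a}$. For the norm estimate, given $h\in\H_s$ put $g=\tfrac{h}{V_F+\overline a}\in\H_s$; then $\psi_a h=(V_F-a)g$ and $\Phi_a h=2\,gF$, so \eqref{sarason-step2} applied to $g$ reads $\|\psi_a h\|_{\H_s}^2+(\Re a)\|\Phi_a h\|_{\H_k(\HE)}^2\le\|(V_F+\overline a)g\|_{\H_s}^2=\|h\|_{\H_s}^2$, as desired.

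For general $F$, choose $F_j\to F$ in $\H_k(\HE)$ with each $F_j$ of the special form. Since each $M_{s_z}$ is bounded on $\H_k(\HE)$ (Lemma \ref{general-criterion}), we get $V_{F_j}(z)\to V_F(z)$ for every $z$, and $F_j(z)\to F(z)$ in $\HE$; as $\Re(V_{F_j}(z)+\overline a)\ge\Re a$ and $\Re(V_F(z)+\overline a)\ge\Re a$ by Corollary \ref{gen Harmonic Majorant}, the denominators stay bounded away from $0$, whence $\psi_a^{(j)}:=\tfrac{V_{F_j}-a}{V_{F_j}+\overline a}\to\psi_a$ and $\Phi_a^{(j)}:=\tfrac{2F_j}{V_{F_j}+\overline a}\to\Phi_a$ pointwise on $\Omega$. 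By the special case, $\|M_{\psi_a^{(j)}}\|\le 1$ and $\|M_{\Phi_a^{(j)}}\|\le 1/\sqrt{\Re a}$; feeding the pointwise limits into the kernel criterion \eqref{general-multiplier-norm}, and using that pointwise limits of positive semidefinite kernels are positive semidefinite, gives $\psi_a\in\Mult(\H_s)$ with norm $\le 1$ and $\Phi_a\in\Mult(\H_s,\H_k(\HE))$ with norm $\le 1/\sqrt{\Re a}$. Finally, for $h$ a finite linear combination of reproducing kernels of $\H_s$, boundedness of $\|\psi_a^{(j)}h\|$ and $\|\Phi_a^{(j)}h\|$ together with pointwise convergence forces $\psi_a^{(j)}h\rightharpoonup\psi_a h$ and $\Phi_a^{(j)}h\rightharpoonup\Phi_a h$ weakly, so weak lower semicontinuity of the norm combined with $\|\psi_a^{(j)}h\|^2+(\Re a)\|\Phi_a^{(j)}h\|^2\le\|h\|^2$ yields the required inequality for such $h$, hence for all $h\in\H_s$ by density; and $F=\tfrac{\Re a}{1-\psi_a}\Phi_a$ is again pointwise algebra.

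The main obstacle I expect is the limiting step: for a general $F$ the function $V_F$ need not lie in $\Mult(\H_s)$, so $\psi_a$ and $\Phi_a$ cannot be analyzed through $V_F$ directly, and one must show that these bounded, respectively fiber-bounded, functions are genuine multipliers with the sharp norm bounds and that the two-term inequality survives passage to the limit. Phrasing the multiplier bounds through the positive-definiteness conditions in \eqref{general-multiplier-norm}, which are visibly stable under pointwise limits, is the cleanest way around this; a smaller technical point is the elementary fact that $M_\phi+M_\phi^*\ge 2\delta I>0$ makes $M_\phi$ invertible with $1/\phi$ again a multiplier.
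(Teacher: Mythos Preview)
Your proof is correct and follows essentially the same two-step structure as the paper: establish the result for $F$ of the special form in Lemma \ref{|f|^2<=Re Phi}, then pass to general $F$ by approximation and weak lower semicontinuity. The one minor difference is that in the special case the paper only shows $M_{V_F+\overline a}$ has dense range (arguing that any $h$ orthogonal to the range must vanish via \eqref{sarason-step1}), whereas you observe the slightly stronger fact that $M_{V_F}+M_{V_F}^*\ge 0$ forces $M_{V_F+\overline a}$ to be invertible; this makes the substitution $g=h/(V_F+\overline a)$ available for every $h$ directly, but otherwise the arguments coincide.
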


\begin{proof}
Assume first that $F$ has the form in Lemma \ref{|f|^2<=Re Phi}	and recall from the lemma that in this case  $V_F\in \Mult(\H_s)$. If $h=(V_F+\overline{a})u$ with $u\in \H_s$, the inequality  \eqref{sarason-step2} applied to $u$ can be rewritten as
$$\left\|\frac{V_F-a}{V_F+\overline{a}}h\right\|_{\H_s}^2+\text{\rm Re }a\left\|\frac{2}{V_F+\overline{a}}F h\right\|_{ \H_k(\HE)}^2   \le\|h\|_{\H_s}^2,$$
which is precisely the inequality in the statement.
We claim that $M_{V_F+\overline{a}}$ has dense range. Indeed, if $h\in (M_{V_F+\overline{a}}\H_s)^\perp$, then $$\langle (M_{V_F+\overline{a}}+M_{V_F+\overline{a}}^*)h,h\rangle_{\H_s}=0,$$
and by  \eqref{sarason-step1} it follows that $$0=\text{Re}\langle(V_F+\overline{a})h,h\rangle_{\H_s}\ge \|h\|_{\H_s}^2\text{Re }a+ \|Fh\|_{\H_k(\HE)}^2,$$
which implies that $h=0$ and proves the claim.
 Thus for $F$ as above we obtain that  $\psi_a\in \Mult(\H_s),~\Phi_{a}\in \Mult(\H_s,\H_k(\HE))$, and the inequality in the statement holds  for all $h\in\H_s$.\\
Clearly, every $F\in \H_k(\HE)$ can be approximated in  $\H_k(\HE)$ by a sequence $(F_N)$ as above and from the previous argument we have that $\psi_a^N=
\frac{V_{F_N}-a}{V_{F_N}+\overline{a}}\in \Mult(\H_s),~ (\sqrt{\text{Re }a})\Phi_a^N=\frac{2 \sqrt{\text{Re }a}}{V_{F_N}+\overline{a}}F_N\in \Mult(\H_s,\H_k(\HE))$
are contractive multipliers. Also   note that $(V_{F_N})$ converges pointwise to $V_F$ in $\Omega$.\\
 Then for $h\in \H_s$, $(\psi_a^Nh)$ converges weakly in $\H_s$ to $\psi_ah$. Similarly,  $(\Phi_a^Nh)$ converges weakly in $\H_k(\HE)$ to $\Phi_ah$, because the sequence is bounded and satisfies
$$\lim_{N\to\infty}\langle \Phi_a^Nh, k_ze\rangle_{\H_k(\HE)}=\lim_{N\to\infty}\frac{2}{V_{F_N}(z)+\overline{a}}\langle F_N(z),e\rangle_{\HE}=\langle \Phi_ah, k_ze\rangle_{\H_k(\HE)},$$
for all $z\in \Omega,~e\in \HE$.
Thus  $(\psi_a^N h, \Phi_a^Nh)$ converges weakly in $\H_s\oplus \H_k(\HE)$ which implies
$$ \|\psi_ah\|_{\H_s}^2+ \text{\rm Re }a\|\Phi_ah\|_{ \H_k(\HE)}^2  \le
\liminf_{N\to\infty}(\|\psi^N_ah\|_{\H_s}^2+  \text{\rm Re }a\|\Phi^N_ah\|_{ \H_k(\HE)}^2) \le\|h\|_{\H_s}^2,$$
and completes the proof.
\end{proof}
Note that the factorization holds without any assumption on $\|F\|_{\H_k(\HE)}$, but we do not control the value  $\psi_a(z_0)$.

\
The factorization in  Theorem \ref{factorization theorem}  (ii) is a direct application. If $\|F\|_{\H_k(\HE)}=1$,  the result is obtained for $a=1$, $\psi=\psi_1,~\Phi=\Phi_1$.
Note that since  $V_F(z_0)=1$,  we have $\psi(z_0)=0$.  For $\|F\|_{\H_k(\HE)}<1$, the factorization, and hence the implication (a) $\Rightarrow$ (b) in Theorem \ref{factorization theorem}  (i),  is obtained as follows. \\
Let
 $w\in \Omega$  and  apply the previous argument  to the function  $$F_w=\left(F,\sqrt{\frac{1-\|F\|_{\H_k(\HE)}^2}{k_w(w)}} k_w\right)\in \H_k(\HE\oplus\C),$$
which has  unit norm in the space above. The Sarason  function of  $F_W$ is  $$V_{F_w}=V_F+(1-\|F\|_{\H_k(\HE)}^2)(2s_w-1),$$ and if $$\psi_w=\frac{V_{F_w}-1}{V_{F_w}+1},\quad \Phi_w=\frac2{V_{F_w}+1} F_w ,$$
we obtain $F=\frac1{1-\psi_w}P_\HE\Phi_w$.
Moreover, it  is easy to verify that if $w,w'\in \Omega$ with $s_w\ne s_{w'}$, the corresponding factorizations  are different.

\
In order to complete the proof of  Theorem \ref{factorization theorem}
we need to verify the implication (b) $\Rightarrow$ (a) together with  the uniqueness of the factorization in the case when $\|F\|_{\H_k(\HE)}=1$. The argument is based on the following   lemma which contains  a useful  approximation result.

\begin{lemma}
	\label{approx-lemma} Let $\psi\in \Mult(\H_s)$, $\Phi\in \Mult(\H_s,\H_k(\HE))$, and assume that
	\be
	\label{(b)}\|\psi h\|_{\H_s}^2+\|\Phi h\|_{ \H_k(\HE)}^2   \le\|h\|_{\H_s}^2,\quad h\in \H_s.\ee
(i) For $0<r<1$, the function $F^r=\frac{\Phi}{1-r\psi}$ belongs to  $ \Mult(\H_s,\H_k(\HE))$ and satisfies
\be
\label{r-approx}\|F^rh\|_{\H_k(\HE)}^2\le \text{\rm Re }\left\langle\frac{1+r\psi}{1-r\psi}h,h\right\rangle_{\H_s}, \quad h\in \H_s.\ee
(ii) If $|\psi(z_0)|<1$, $F^r$ converges weakly in $\H_k(\HE)$  to  $F=\frac{\Phi}{1-\psi}$  when $r\to 1^-$, and we have  $$s_w(z)\left(\frac{1+\psi(z)}{1-\psi(z)} +
\overline{\frac{1+\psi(w)}{1-\psi(w)}}\right)-2\langle s_wF,s_zF\rangle_{\H_k(\HE)}>>0,\quad z,w\in \Omega.$$

\end{lemma}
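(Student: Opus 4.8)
The plan is to prove (i) by a direct change of variable and (ii) by approximating $F$ with the functions $F^r$ and letting $r\to1^-$.

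For (i): note first that \eqref{(b)} forces $\|M_\psi\|_{\HB(\H_s)}\le1$, so for $0<r<1$ the operator $I-rM_\psi$ is invertible on $\H_s$ with inverse $\sum_{n\ge0}r^nM_\psi^n$; hence $\tfrac{1}{1-r\psi}$ and $\chi_r:=\tfrac{1+r\psi}{1-r\psi}=-1+\tfrac{2}{1-r\psi}$ both belong to $\Mult(\H_s)$, and $M_{F^r}=M_\Phi M_{1/(1-r\psi)}$ is bounded from $\H_s$ to $\H_k(\HE)$, i.e.\ $F^r\in\Mult(\H_s,\H_k(\HE))$. To get \eqref{r-approx} I would substitute $g=(1-r\psi)^{-1}h$, which ranges over all of $\H_s$ as $h$ does; then $F^rh=\Phi g$, and since the cross terms in $\langle(1+r\psi)g,(1-r\psi)g\rangle_{\H_s}$ are purely imaginary we get $\Re\langle\chi_rh,h\rangle_{\H_s}=\|g\|_{\H_s}^2-r^2\|\psi g\|_{\H_s}^2$. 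As \eqref{(b)} applied to $g$ gives $\|\Phi g\|_{\H_k(\HE)}^2\le\|g\|_{\H_s}^2-\|\psi g\|_{\H_s}^2\le\|g\|_{\H_s}^2-r^2\|\psi g\|_{\H_s}^2$, this is exactly \eqref{r-approx}.

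For (ii): I would first note that $|\psi(z_0)|<1$ forces $|\psi(z)|<1$ for all $z\in\Omega$. Indeed, by \eqref{general-multiplier-norm} the kernel $(1-\psi(z)\overline{\psi(w)})s_w(z)$ is positive definite, and normalization of $s$ at $z_0$ gives $s_{z_0}(z_0)=s_{z_0}(z)=s_z(z_0)=1$; if $|\psi(z_1)|=1$ for some $z_1$, the $2\times2$ submatrix of this kernel at $\{z_0,z_1\}$ is positive semidefinite with a zero diagonal entry, which forces $\psi(z_0)\overline{\psi(z_1)}=1$, contradicting $|\psi(z_0)|<1$. Hence $F=\tfrac{\Phi}{1-\psi}$ and $\chi:=\tfrac{1+\psi}{1-\psi}$ are well defined on $\Omega$ and $F^r(z)\to F(z)$ pointwise as $r\to1^-$. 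Taking $h=s_{z_0}$ (the constant function $1$) in \eqref{r-approx} gives the uniform bound $\|F^r\|_{\H_k(\HE)}^2\le\Re\,\chi_r(z_0)=\tfrac{1-r^2|\psi(z_0)|^2}{|1-r\psi(z_0)|^2}$ for $r$ near $1$, and since the linear span of $\{k_ze:z\in\Omega,\,e\in\HE\}$ is dense in $\H_k(\HE)$, a routine argument (boundedness plus pointwise convergence) gives $F\in\H_k(\HE)$ and $F^r\to F$ weakly.

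The remaining positivity I would verify against an arbitrary finite configuration $w_1,\dots,w_m\in\Omega$, $a_1,\dots,a_m\in\C$; set $p=\sum_ia_is_{w_i}\in\H_s$. Applying \eqref{r-approx} with $h=p$ and the reproducing property (with $\chi_rp\in\H_s$) gives $\|pF^r\|_{\H_k(\HE)}^2\le\Re\langle\chi_rp,p\rangle_{\H_s}=\Re\sum_i\overline{a_i}\,\chi_r(w_i)\,p(w_i)$, which converges to $\Re\sum_i\overline{a_i}\chi(w_i)p(w_i)$ as $r\to1^-$. On the other hand $pF^r=M_pF^r\to M_pF=pF$ weakly in $\H_k(\HE)$ (here $p\in\Mult(\H_k(\HE))$ because $s_{w_i}\in\Mult(\H_k(\HE))$ by Lemma \ref{general-criterion}, and the $pF^r$ are uniformly bounded), so by weak lower semicontinuity of the norm $\|pF\|_{\H_k(\HE)}^2\le\liminf_{r\to1^-}\|pF^r\|_{\H_k(\HE)}^2\le\Re\sum_i\overline{a_i}\chi(w_i)p(w_i)$. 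Unwinding both sides — $\|pF\|_{\H_k(\HE)}^2=\sum_{i,j}a_i\overline{a_j}\langle s_{w_i}F,s_{w_j}F\rangle_{\H_k(\HE)}$ and, after symmetrizing, $\Re\sum_i\overline{a_i}\chi(w_i)p(w_i)=\tfrac12\sum_{i,j}a_i\overline{a_j}\,s_{w_i}(w_j)\bigl(\chi(w_j)+\overline{\chi(w_i)}\bigr)$ — this says exactly that $\sum_{i,j}a_i\overline{a_j}\bigl[s_{w_i}(w_j)\bigl(\chi(w_j)+\overline{\chi(w_i)}\bigr)-2\langle s_{w_i}F,s_{w_j}F\rangle_{\H_k(\HE)}\bigr]\ge0$, i.e.\ the claimed positive definiteness. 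The main obstacle is precisely this limiting step: the mixed quantities $\langle s_wF^r,s_zF^r\rangle_{\H_k(\HE)}$ need not converge to $\langle s_wF,s_zF\rangle_{\H_k(\HE)}$ for fixed $z,w$ (weak convergence of $F^r$ is too weak for that), so the point is to encode the kernel positivity through $\|pF^r\|^2$ for finite kernel combinations $p$ and use only weak lower semicontinuity of the norm together with the honest pointwise limit of $\langle\chi_rp,p\rangle_{\H_s}$.
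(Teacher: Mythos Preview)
Your proof is correct and follows essentially the same route as the paper's: in (i) you perform the same substitution $g=(1-r\psi)^{-1}h$ and the same algebraic identity for $\Re\langle\chi_r h,h\rangle$, and in (ii) you use the same weak-lower-semicontinuity argument against finite kernel combinations $p=\sum_i a_i s_{w_i}$. The only notable difference is that you supply an explicit $2\times 2$ positivity argument to show $|\psi(z)|<1$ everywhere (needed for the pointwise limit $F^r(z)\to F(z)$), a point the paper takes for granted.
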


\begin{proof}

(i)  Fix $0<r<1$.  From \eqref{(b)}
it follows that  $\psi\in \Mult(\H_s)$ is contractive, i.e. $\frac1{1-r\psi}\in \Mult(\H_s)$.
Since  $\Phi\in \Mult(\H_s,\H_k(\HE))$ and $F^r=\frac{\Phi}{1-r\psi}$ we obtain  that  $ F^r\in \Mult(\H_s,\H_k(\HE))$. Moreover,  \eqref{(b)} also  implies that
$$r^2\|\psi h\|_{\H_s}^2+\|\Phi h\|_{ \H_k(\HE)}^2   \le\|h\|_{\H_s}^2,\quad h\in \H_s,$$
and when applied to $(1-r\psi)^{-1}h$ it yields
$$\|F^rh\|_{\H_k(\HE)}^2\le \left\|\frac1{1-r\psi}h\right\|^2-\left\|\frac{r\psi}{1-r\psi}h\right\|^2.$$
Then the result follows from  $\frac{1+r\psi}{1-r\psi}-1=\frac{2r\psi}{1-r\psi},~\frac{1+r\psi}{1-r\psi}+1
=\frac{2}{1-r\psi}$.\\ (ii)  We have that  $F^r(z)\to F(z)$  when $r\to 1^-$, and an application of  (i) with $h=1=s_{z_0}$,  gives  $\|F^r\|_{\H_k(\HE)}\le \frac{1+|\psi(z_0)|}{1-|\psi(z_0)|}$ which shows that $F^r\to F$ weakly in $\H_k(\HE)$ when $r\to 1^-$. To see the second assertion, let $h\in \H_s$  with $$h=\sum_{i=1}^na_is_{z_i},$$ and apply again (i) to obtain
\begin{align*}& \sum_{i,j=1}^na_i\overline{a}_j\langle s_{z_i}F, s_{z_j}F\rangle_{\H_k(\HE)}=\|hF\|_{\H_k(\HE)}^2 \le
\liminf_{r\to 1^-}\|hF^r\|_{\H_k(\HE)}^2\\& \le \limsup_{r\to 1^-}  \text{\rm Re }\left\langle\frac{1+r\psi}{1-r\psi}h,h\right\rangle_{\H_s} \\
&= \sum_{i,j=1}^na_i\overline{a}_j \frac{1}{2} s_{z_i}(z_j) \left(\frac{1+\psi(z_j)}{1-\psi(z_j)} +\overline{\frac{1+\psi(z_i)}{1-\psi(z_i)}}\right), \end{align*}
which completes the proof.
	\end{proof}
An immediate consequence  is the implication (b) $\Rightarrow$ (a) in Theorem \ref{factorization theorem} (i). Indeed, if (b) holds, then \eqref{(b)} holds and in addition, $\psi(z_0)=0$, hence \eqref{r-approx} with $h=1$ gives  $\|F^r\|_{\H_k(\HE)}^2\le 1$. By part (ii) of the lemma we obtain  $\|F\|_{\H_k(\HE)}^2\le 1$. \\ The uniqueness assertion
in part (ii) of Theorem \ref{factorization theorem}   is another direct application.
 Let $$L(z,w)
=s_w(z)\left(\frac{1+\psi(z)}{1-\psi(z)} +
\overline{\frac{1+\psi(w)}{1-\psi(w)}}\right)-2\langle s_wF,s_zF\rangle_{\H_k(\HE)} $$
denote the positive definite function from part (ii) of Lemma \ref{approx-lemma}.   If $\psi(z_0)=0$ and $\|F\|_{\H_k(\HE)}=1$, it follows that $L(z_0,z_0)=0$. Then the standard inequality $$|L(z,z_0)|^2\le L(z_0,z_0)L(z,z),$$
gives $L(z,z_0)=0,$ hence $$\frac{1+\psi(z)}{1-\psi(z)}=
2\langle F,s_zF\rangle_{\H_k(\HE)}-1=V_F(z),$$
i.e., $\psi=\frac{V_F-1}{V_F+1}$, and the assertion follows.\\ This completes the proof of  Theorem \ref{factorization theorem}.

Finally, we record the following sharpening of the first part of Lemma \ref{approx-lemma} (ii).
Let $F \in \H_k(\HE)$ with $||F||_{\H_k(\HE)} = 1$ and
let $F = \frac{\Phi}{1 - \psi}$ be the unique factorization of Theorem \ref{factorization theorem}.
For $0 < r < 1$, define as above $F^r = \frac{\Phi}{1 - r \psi}$.
Then $F^r$ converges in norm to $F$ as $r \to 1^-$. Indeed, we already saw that
$F^r$ converges weakly to $F$. Moreover, as remarked after the proof of Lemma \ref{approx-lemma}, $||F^r||_{\H_k(\HE)} \le 1$ for all $r < 1$. Since $||F||_{\H_k(\HE)} = 1$,
it follows that the convergence is actually in norm in this case.

\section{Examples and  applications}

\subsection{Examples}
1)  {\it Weighted Bergman spaces.} Let $\mu$ be a finite positive measure on $\B_d,~d\in \N$, such that for all $f\in \Hol(\B_d)$ and for any compact subset $K\subset  \B_d$, there exists $c_K>0$ such that  $$|f(z)|^2\le c_K
\int_{\B_d}|f|^2d\mu,$$
for all $f\in \Hol(\B_d)$ and all $z\in K$.

The corresponding Bergman space   $L_a^2(\mu)=L^2(\mu)\cap \Hol(\B_d)$  is a  Hilbert space with  reproducing kernel $k^\mu$.
 Here we  shall only consider the scalar case, but all considerations extend to the vector-valued version $L_a^2(\mu,\HE)$  defined correspondingly. \\
 Since every analytic contractive $\HB(l^2,\C)$-valued function induces a contractive multiplier on $L_a^2(\mu)$, it follows by Corollary \ref{exCNPfactor} that $k^\mu/s>>0$ for any analytic normalized CNP kernel in $\B_d$. In what follows we shall focus on the Drury-Arveson kernel
  $s=\frac1{1-\langle z,w\rangle}$.
	In this case Theorem \ref{factorization theorem} shows that  any $f\in L^2_a(\mu)$ of norm at most $1$,  can be written as $f=\frac{\varphi}{1-\psi}$, where $\psi \in \Mult(H_d^2)$ with $\psi(0)=0$, with multiplier norm at most 1, and $\varphi\in \Hol(\B_d)$ satisfies that
	$$\int_{\B_d}|g\varphi|^2d\mu\le \|g\|_{H_d^2}^2,$$ i.e. $|\varphi|^2d\mu$ is a \emph{Carleson measure} for $H_d^2$. These objects are best understood  when $d=1$.

 Theorem \ref{factorization theorem} and Corollary \ref{gen Harmonic Majorant} lead to  interesting inequalities:\\
 If $f\in L^2_a(\mu)$ with $\|f\|=1$, then the Sarason function of $f$ is given by  $$V_f(z) = \int_{\B_d} |f(w)|^2 \frac{1+\langle z,w\rangle}{1-\langle z,w\rangle}d\mu$$ and we conclude
$$\|\frac{V_f-1}{V_f+1} h\|^2_{H_d^2} + \|\frac{2fh}{V_f+1}\|^2_{L^2_a(\mu)} \le \| h\|^2_{H_d^2}$$ for all $h \in H_d^2$.  Moreover, Corollary \ref{gen Harmonic Majorant}
 says that $$|f(z)|^2\le \frac{k_z^\mu(z)}{ (1-|z|^2)}\int_{\B_d}|f(w)|^2 \frac{1-|\langle z,w\rangle|^2}{|1-\langle z,w\rangle|^2} d\mu(w).$$

Let us apply Corollary \ref{phi=GGam-appl} when  $\mu=A$, the normalized  area measure on the unit disc $\B_1=\D$.  In this case $L_a^2(A)$ is denoted simply by $L_a^2$ and its reproducing kernel is
$$k^A_w(z)=\frac{1}{(1-\overline{w}z)^2}=s^2_w(z),$$ where
$s_w(z)=\frac1{1-z\overline{w}}$ is the Szeg\"o kernel.
Here we have that   $k^A/s=G(z)G(w)^*$   with $G(z)=(1,z,z^2,...)$. \\Then by  Corollary \ref{phi=GGam-appl}  and the fact that $\Mult(\H_s)=H^\infty$,   it follows that $f\in L^2_a$ with $ \|f\|_{L^2_a}\le 1$,  if and only if there are $\psi, \varphi_n \in H^\infty,~n \ge 1$ with $\psi(0)=0$, $|\psi(z)|^2 +\sum_{n=1}^\infty |\varphi_n(z)|^2 \le 1$,
such that  $$f(z)= \frac{\sum_{n=1}^\infty z^n \varphi_n(z)}{1-\psi(z)}.$$ Similar calculations can be carried out for standard weighted Bergman  on the unit ball $\B_d$, or the polydisc $\D^d$.

2) {\it  Hardy spaces.}  The  Hardy space $H^2(\B_d)$ is defined as the closure of analytic polynomials in $L^2(\sigma_d)$, where $\sigma_d$ is the normalized Lebesgue measure on the unit sphere. Its reproducing  kernel $k$ is given by $k=s^d$, where $s$ is the Drury-Arveson kernel.  All considerations above apply, the corresponding calculations go through and one obtains similar results. For example,
for  $d=2$ Corollary \ref{phi=GGam-appl} implies that
$f$ is in the unit ball of  $H^2(\B_2)$
if and only if
\be
\label{eq2}
f(z) \ = \
\frac{\sum_{\alpha} c_\alpha z^\alpha \varphi_\alpha(z)}{1- \psi(z)},
\ee
where $\alpha = (\alpha_1, \alpha_2)$ is a multi-index,  $\| c_\alpha z^\alpha \|_{H^2_2} = 1$,
$\psi(0) = 0$ and
\[
\begin{pmatrix}
\psi \\
\varphi_{(1,0)} \\
\varphi_{(0,1)} \\
\vdots
\end{pmatrix}
\]
is a contraction from $H^2_2$ to $H^2_2 ( l^2)$. \\
If  $f\in H^2(\B_d),~d\in\N$ has unit norm, its Sarason function is
$$V_f(z) = \int_{|w|=1} |f(w)|^2 \frac{1+\langle z, w\rangle }{1-\langle z, w\rangle}d\sigma_d(w),$$
and Theorem \ref{factorization theorem} gives
$$\|\frac{V_f-1}{V_f+1} h\|^2_{H_d^2} + \|\frac{2fh}{V_f+1}\|^2_{H^2(\B_d)} \le \| h\|^2_{H_d^2}$$ for all $h \in H_d^2$. Also, by Corollary \ref{gen Harmonic Majorant} we  have
$$(1-|z|^2)^{d-1}|f(z)|^2 \le \int_{|w|=1} |f(w)|^2 \frac{1-|\langle z,w\rangle |^2}{|1-\langle z, w\rangle |^2} d \sigma_d(w).$$
In product domains, for example $\D^d,~d\in \N\cup\{\infty\}$,    the  Hardy space $H^2(\D^d)$ is defined as the closure of analytic polynomials in $L^2(\sigma_1^d)$, where $\sigma_1^d$ is the  product of $d$ copies of $\sigma_1$.   In the case when $d=\infty$, we consider analytic polynomials in a finite number of variables. It is well known  (see for example \cite{HLS}), that $H^2(\D^\infty)$ can be identified with the space of Dirichlet series with square summable coefficients. It can be viewed as a reproducing kernel Hilbert space on the set $\Omega$ consisting of points in $\D^\infty$ whose coordinates form an $l^2$-sequence. In all cases the reproducing kernel is given by $$k_w(z)=\prod_{j=1}^ds^0_{w_j}(z_j),\quad z=(z_j),  w=(w_j)$$
where $s^0$ is the Szeg\"o kernel. Clearly,  each factor of this product  is a normalized CNP factor of $k$.
Similar  calculations can be performed using these kernels.
Also, according to   Corollary \ref{exCNPfactor} (i),  if $u\in H^\infty(\D^d)$ of norm at most $1$ with $u(0)=0$, 
 and $s_w(z)=\frac1{1-u(z)\overline{u(w)}}$, then $k/s>>0$.

\subsection{Invariant subspaces}
A direct application of Theorem \ref{factorization theorem} gives (see \cite{AHMcR} for the case $k=s$) that if $k=sg$ with $s$ a normalized CNP kernel  and $g$ positive definite,  then the zero-sets of $\H_k$-functions coincide with the zero-sets of functions in $\Mult(\H_s,\H_k)$.  This is a somewhat surprising result since the second space of functions might be considerably smaller. The idea extends to multiplier-invariant subspaces in a natural way. If $\HE$ is a separable Hilbert space, a closed subspace $\HM$ of $\H_k(\HE)$ is called multiplier-invariant  if $\varphi\HM\subset M$ whenever $\varphi\in \Mult(\H_k)$. The multiplier-invariant subspace generated by $ \mathcal{S}\subset \H_k(\HE)$  is the closure of  in $\H_k$ of  $\{\varphi F:~\varphi\in \Mult(\H_k),~F\in \mathcal{S}\}$
 and will be denoted by $[S]$. We also write $[\{F\}]=[ F]$.
\begin{corollary}\label{inv-subspaces}  If  $F\in \H_k(\HE), \|F\|_{\H_k(\HE)}=1$, and let  $F=\frac1{1-\psi}\Phi$ be the factorization given by Theorem \ref{factorization theorem}. Then  $[ F]=[\Phi]$. In particular,  every multiplier-invariant subspace of $\H_k(\HE)$ is generated by elements of $\Mult(\H_s,\H_k(\HE))$.    \end{corollary}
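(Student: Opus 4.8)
The plan is to show the two inclusions $[\Phi] \subseteq [F]$ and $[F] \subseteq [\Phi]$ separately, exploiting that $F$ and $\Phi$ differ only by the scalar multiplier factor $\frac{1}{1-\psi}$ and its formal inverse $1-\psi$. First I would observe that since $\psi \in \Mult(\H_s)$ and $\H_s$ is contractively contained in $\Mult(\H_k)$-compatible sense — more precisely, since $\frac{1}{1-r\psi} \in \Mult(\H_s) \subseteq \Mult(\H_k)$ by Lemma \ref{general-criterion} for every $0<r<1$ — the functions $F^r = \frac{\Phi}{1-r\psi}$ all lie in the multiplier-invariant subspace $[\Phi]$, because $F^r = M_{(1-r\psi)^{-1}} \Phi$ with $(1-r\psi)^{-1} \in \Mult(\H_k)$. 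By the sharpening recorded at the end of Section 3, $F^r \to F$ in norm in $\H_k(\HE)$ as $r \to 1^-$; since $[\Phi]$ is closed, $F \in [\Phi]$, and hence $[F] \subseteq [\Phi]$.

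For the reverse inclusion $[\Phi] \subseteq [F]$, the key is that $1 - \psi$ is a multiplier on $\H_s$ (hence on $\H_k$) with $\Phi = M_{1-\psi} F$, so $\Phi \in [F]$ directly, giving $[\Phi] \subseteq [F]$. Combining the two inclusions yields $[F] = [\Phi]$. The final sentence — that every multiplier-invariant subspace $\HM$ of $\H_k(\HE)$ is generated by elements of $\Mult(\H_s, \H_k(\HE))$ — then follows by writing $\HM$ as the closed span of $\{[F] : F \in \HM\}$ (each cyclic subspace $[F]$ is contained in $\HM$ by invariance), normalizing each nonzero $F$ to unit norm, applying the factorization to get $F = \frac{1}{1-\psi_F}\Phi_F$ with $\Phi_F \in \Mult(\H_s, \H_k(\HE))$ and $[F] = [\Phi_F]$, and noting $\HM = \overline{\operatorname{span}} \bigcup_{F} [\Phi_F]$, so the generating set $\{\Phi_F\}$ lies in $\Mult(\H_s, \H_k(\HE))$.

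The main obstacle — and the only genuinely nonroutine point — is the norm convergence $F^r \to F$, which is exactly why the paragraph at the end of Section 3 was inserted: weak convergence of $F^r$ to $F$ (from Lemma \ref{approx-lemma}(ii)) together with the uniform bound $\|F^r\|_{\H_k(\HE)} \le 1 = \|F\|_{\H_k(\HE)}$ forces $\|F^r\| \to \|F\|$, and in a Hilbert space weak convergence plus convergence of norms upgrades to norm convergence. Everything else is bookkeeping with the fact that $\Mult(\H_s) \subseteq \Mult(\H_k)$ contractively, so that multiplying by $1-\psi$ or by $(1-r\psi)^{-1}$ keeps one inside the relevant multiplier-invariant subspace. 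One should also note in passing that $\Phi \ne 0$: since $\psi(z_0)=0$ we have $\Phi(z_0) = F(z_0)(1-\psi(z_0)) = F(z_0)$, and if $F(z_0) = 0$ one still has $F \not\equiv 0$ forcing $\Phi \not\equiv 0$ via $\Phi = (1-\psi)F$ and the fact that $1-\psi$ is cyclic (hence not a zero divisor in the relevant sense), so the generating family genuinely consists of the claimed multipliers.
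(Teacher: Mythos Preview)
Your proof is correct and follows essentially the same approach as the paper's: both show $\Phi=(1-\psi)F\in[F]$ using $\Mult(\H_s)\subseteq\Mult(\H_k)$, and both show $F^r=\frac{\Phi}{1-r\psi}\in[\Phi]$ and pass to the limit. The only minor difference is that you invoke the norm convergence $F^r\to F$ from the sharpening at the end of Section~3, whereas the paper appeals directly to the weak convergence in Lemma~\ref{approx-lemma}(ii), which already suffices since $[\Phi]$ is a closed (hence weakly closed) subspace; your extra discussion of $\Phi\ne 0$ and the detailed unpacking of the ``in particular'' clause are not needed but do no harm.
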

\begin{proof} Let  $F\in \H_k(\HE), \|F\|_{\H_k(\HE)}=1$ with  $F=\frac1{1-\psi}\Phi$ as in Theorem \ref{factorization theorem}.
Then by definition and  Lemma  \ref{general-criterion}, $\Phi=(1-\psi)F\in [ F]$. Conversely, the same argument shows that  for $0<r<1$,  we have  $F^r=\frac{1}{1-r\psi}\Phi\in [ \Phi]$, hence by  Lemma \ref{approx-lemma} (ii) it follows that $F\in [\Phi]$. The second part of the statement is an obvious consequence of the first.
\end{proof}

\subsection{Extremal functions} Let $k,s$ be reproducing kernels on $\Omega$ such that $s$ is a normalized CNP kernel, $s_{z_0}=1$, and $k/s>>0$.   We say that $F\in \H_k(\HE)$ is extremal if $$\langle \varphi F,F\rangle_{\H_k(\HE)} =\varphi(z_
0)$$
 for all $\varphi\in \Mult(\H_k)$. These functions  generate wandering subspaces for shift-invariant subspaces and  in certain cases they play  an essential role in that theory (see for example,  \cite{ARS}, \cite{Shimorin1}, \cite{Eschetal}).
If $F$ is extremal in $\H_k$ then it has unit norm and  $$V_F(z)=2\langle  F, s_z F\rangle_{\H_k(\HE)}-1=1,$$
hence in the notation in Theorem \ref{factorization theorem} we have $\psi=0$, and $\Phi=F$.

\begin{corollary}\label{extremal}  Let $k,s$ be as above. \\ (i) Every extremal function in $F\in \H_k(\HE)$  is a contractive multiplier  from $\H_s$ into $\H_k(\HE)$. In particular, if $F$  is extremal  in $\H_k(\HE)$, then
	$$\|F(z)\|_{\HE}^2\le \frac{k_z(z)}{s_z(z)}.$$
(ii) If  the linear span of the kernels $s_{z},~z\in \Omega$,  is dense in $\H_k$, then a function $F \in \mathcal H_k(\HE)$ of norm $1$ is an extremal function in $\mathcal H_k(\HE)$ if and only if it is a contractive multiplier from $\mathcal H_s$ into $\mathcal H_k(\HE)$.
	\end{corollary}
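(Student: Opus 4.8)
The plan is to read both parts off Theorem~\ref{factorization theorem} and Corollary~\ref{gen Harmonic Majorant}, using that extremality of $F$ is exactly the statement that its Sarason function is identically $1$. For \emph{part (i)}: if $F$ is extremal, then testing $\langle \varphi F, F\rangle_{\H_k(\HE)} = \varphi(z_0)$ against $\varphi = 1$ gives $\|F\|_{\H_k(\HE)} = 1$, and against $\varphi = s_z$ — which lies in $\Mult(\H_s) \subseteq \Mult(\H_k)$ by Lemma~\ref{general-criterion} — gives $\langle s_z F, F\rangle_{\H_k(\HE)} = s_z(z_0) = 1$, so $V_F(z) = 2\langle F, s_z F\rangle_{\H_k(\HE)} - 1 \equiv 1$ on $\Omega$. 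Since $\|F\|_{\H_k(\HE)} = 1$, the uniqueness part of Theorem~\ref{factorization theorem}(ii) then produces the factorization $\psi = \frac{V_F-1}{V_F+1} = 0$, $\Phi = \frac{2}{V_F+1}F = F$, and the inequality in (b) becomes $\|Fh\|_{\H_k(\HE)}^2 \le \|h\|_{\H_s}^2$ for all $h \in \H_s$, i.e.\ $F \in \Mult(\H_s,\H_k(\HE))$ with $\|M_F\|\le 1$. The pointwise bound is then immediate from Corollary~\ref{gen Harmonic Majorant}, which gives $\frac{s_z(z)}{k_z(z)}\|F(z)\|_{\HE}^2 \le \Re V_F(z) = 1$; alternatively it is \eqref{general-multiplier-norm} with $A=1$ evaluated at $z=w$.

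For \emph{part (ii)}, the ``only if'' direction is exactly part (i), and it requires no hypothesis on the kernels $s_z$. For ``if'', assume $\|F\|_{\H_k(\HE)}=1$ and $F\in\Mult(\H_s,\H_k(\HE))$ with $\|M_F\|\le 1$. First I would recover $V_F\equiv 1$: the pair $\psi=0$, $\Phi=F$ satisfies condition (b) of Theorem~\ref{factorization theorem}(i), including $\psi(z_0)=0$, so by uniqueness in part (ii) we must have $\frac{V_F-1}{V_F+1}=0$, i.e.\ $V_F\equiv 1$. (Equivalently, the positive contraction $I-M_F^*M_F$ on $\H_s$ annihilates $1=s_{z_0}$ because $\|M_F 1\|_{\H_k(\HE)}=\|F\|_{\H_k(\HE)}=1=\|1\|_{\H_s}$; then, using $\varphi F = M_F\varphi$ for $\varphi\in\Mult(\H_s)\subseteq\H_s$, one gets $\langle \varphi F,F\rangle_{\H_k(\HE)} = \langle M_F^*M_F\varphi,1\rangle_{\H_s}=\langle \varphi,1\rangle_{\H_s}=\varphi(z_0)$ for every $\varphi\in\Mult(\H_s)$.) Taking $\varphi = s_z$ gives $\langle s_z F,F\rangle_{\H_k(\HE)}=1=\langle F,F\rangle_{\H_k(\HE)}$ for all $z\in\Omega$, whence by linearity $\langle hF,F\rangle_{\H_k(\HE)}=h(z_0)$ for every $h$ in the linear span of $\{s_z:z\in\Omega\}$ (such $h$ lie in $\Mult(\H_s)\subseteq\Mult(\H_k)$, so $hF\in\H_k(\HE)$ makes sense).

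It remains — and this is the main point — to promote the identity $\langle \varphi F,F\rangle_{\H_k(\HE)}=\varphi(z_0)$ from $\varphi$ in the span of $\{s_z\}$ to an arbitrary $\varphi\in\Mult(\H_k)$; this is where the density of the span of $\{s_z\}$ in $\H_k$ enters. The linear functional $\varphi\mapsto \langle \varphi F,F\rangle_{\H_k(\HE)}-\varphi(z_0)$ on $\Mult(\H_k)$ is weak-$*$ continuous (the map $\varphi\mapsto M_\varphi$ is weak-$*$-to-weak-operator continuous, and $A\mapsto\langle AF,F\rangle$ is weak-operator continuous) and vanishes on the span of $\{s_z\}$, so the task reduces to showing that this span is weak-$*$ dense in $\Mult(\H_k)$, which I expect to be the technical heart and to follow from the norm-density hypothesis. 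An equivalent route is to note that ``$F$ extremal'' means $F$ is orthogonal to the functions in $[F]:=\overline{\Mult(\H_k)F}$ that vanish at $z_0$; what we have shown is that $F$ is orthogonal to the functions vanishing at $z_0$ in $\overline{\Mult(\H_s)F}=\overline{\Ran M_F}$ (using that $\Mult(\H_s)$, which contains the span of $\{s_z\}$, is dense in $\H_s$), so it suffices to see that this closed $\Mult(\H_s)$-invariant subspace — which by density of the $s_z$ in $\H_k$ is generated over $\Mult(\H_k)$ by $F$ — actually equals $[F]$. In either formulation the obstacle is the same: norm convergence in $\H_k$ of a sequence from the span of $\{s_z\}$ toward a multiplier of $\H_k$ does not control the multiplier norm, so one must either establish weak-$*$ density of $\operatorname{span}\{s_z\}$ in $\Mult(\H_k)$ or extract a suitable uniformly bounded approximating sequence; I expect this to be where the real work lies.
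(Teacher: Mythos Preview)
Your argument for part (i), and your derivation of $V_F\equiv 1$ in part (ii) via the uniqueness clause of Theorem~\ref{factorization theorem}(ii), are exactly the paper's proof.

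Where you and the paper differ is in the very last step of (ii). After obtaining $\langle F,s_zF\rangle_{\H_k(\HE)}=1$ for all $z\in\Omega$, the paper simply writes: ``The density assumption now implies that $F$ is an extremal function in $\H_k(\HE)$.'' It does not carry out any of the weak-$*$ density analysis you anticipate, nor does it argue that $\overline{\Mult(\H_s)F}=[F]$. Whatever the authors intend here is not spelled out. So you have identified a point the paper glosses over, and your proposed routes (weak-$*$ density of $\operatorname{span}\{s_z\}$ in $\Mult(\H_k)$, or the equality of cyclic subspaces) are reasonable, but neither is obviously a consequence of the hypothesis that $\operatorname{span}\{s_z\}$ is merely \emph{norm}-dense in $\H_k$; in concrete examples like $\H_s=D$, $\H_k=H^2$ one has $\Mult(\H_s)\subsetneq\Mult(\H_k)$, so the passage from $\Mult(\H_s)$ to $\Mult(\H_k)$ is genuine. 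In short: your core argument matches the paper's, and the ``real work'' you flag is work the paper does not do either.
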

\begin{proof} The first part is from   Theorem \ref{factorization theorem},  while the second can be proved either directly, or by an application of  Corollary \ref{gen Harmonic Majorant}. (ii) Suppose that $F$ is a contractive multiplier from $\mathcal H_s$ into $\mathcal H_k(\HE)$.
  By Theorem \ref{factorization theorem} (ii) there is a unique representation $F = \Phi / (1- \psi)$, where $|| \psi h||_{\mathcal H_s}^2 + ||\Phi h||^2_{\mathcal H_k(\HE)} \le ||h||_{\mathcal H_s}^2$ for all $h \in \mathcal H_s$. Since $F$ is a contractive multiplier from $\mathcal H_s$ into $\mathcal H_k(\HE)$, it follows from uniqueness that $\Phi = F$ and $\psi = 0$. In particular, the Sarason function of $F$ satisfies $V_F = 1$, and hence $\langle F, s_z F \rangle_{\mathcal H_k(\HE)} = 1$ for all $z \in \Omega$. The density assumption now implies that $F$ is an extremal function in $\mathcal H_k(\HE)$.
			\end{proof}

Part (ii) extends to the general context a very recent result obtained by D. Seco  \cite{Seco} for the Dirichlet space.  Part (i)  of the corollary recovers some known results, especially when $\HE=\C$.  For example,
 when $k=s$  part (i) can be found in  \cite{McT}. For weighted Bergman spaces on the unit disc  a slightly stronger result holds, since (see subsection  4.1 above) the condition
$$\text{Re }V_F(z)=\int_\D\frac{1-|wz|^2}{|1-\overline{w}z|^2}\|F(z)\|_{\HE}^2d\mu(z)=1,$$
is in general more restrictive than the conclusion of the corollary. Recently,  Corollary \ref{extremal} (i)  has been established in   \cite{Eschetal} in the case when  $\Omega= \mathbb{B}_d,~z_0=0$, multiplication by the coordinates form a $d$-contraction on $\H_k$, and $s$ is the Drury-Arveson kernel. Recall that $k/s>>0$ by Corollary \ref{exCNPfactor} (ii).  In this case extremal functions $F$ satisfy  $\|F(z)\|_\HE^2\le(1-|z|^2)k_z(z)$. By Corollary \ref{extremal}, these results continue to hold when $d=\infty$.  Another situation when the corollary applies is when $\Omega\subset \D^d,~d\in \mathbb{N}\cup\{\infty\}$, and multiplication by each coordinate is a contraction on $\H_k$. If $z_0=0$, and  $s_{jw}(z)=\frac1{1-z_j\overline{w_j}}$ then $k/s_j>>0,~ j\ge 1$ and clearly, $$\H_{s_j}=H^{2,j} =\{h:~h(z)=v(z_j),~v\in H^2\}.$$
Corollary \ref{extremal} implies that extremal functions $F\in \H_k(\HE)$ are contractive multipliers from each $H^{2,j}$ into $\H_k$ and satisfy $\|F(z)\|_\HE^2\le(1-\max_j|z_j|^2)k_z(z)$.\\
The pointwise estimates  for extremal functions can also  be obtained  from
Corollary \ref{exCNPfactor} (i).
\begin{corollary}\label{pointwise_est-extr} Let $k$ be a reproducing kernel on $\Omega$ and set $$\alpha_k(z)=\sup\{ u(z)u^*(z):~ u\in \Mult(\H_k(l^2),\H_k), u(z_0)=0, \|M_u\|\le 1\}.$$
If $F$  is extremal  in $\H_k(\HE)$, then
$$\|F(z)\|_{\HE}^2\le (1-\alpha_k(z))k_z(z).$$
\end{corollary}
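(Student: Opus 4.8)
The idea is to recognize that Corollary \ref{pointwise_est-extr} follows by combining Corollary \ref{exCNPfactor} (i) with the pointwise estimate for extremal functions in Corollary \ref{extremal} (i). Concretely, fix $z \in \Omega$ and let $u = (u_n)_{n \ge 1} \in \Mult(\H_k(l^2), \H_k)$ with $u(z_0) = 0$ and $\|M_u\| \le 1$. Set $s_w(\zeta) = \frac{1}{1 - u(\zeta) u^*(w)}$. By Corollary \ref{exCNPfactor} (i) this is a normalized CNP kernel (normalized at $z_0$, since $u(z_0) = 0$) with $k/s >> 0$. Now if $F$ is extremal in $\H_k(\HE)$, then $F$ is extremal with respect to \emph{this} factorization $k = sg$, so Corollary \ref{extremal} (i) applies and gives
$$
\|F(z)\|_{\HE}^2 \le \frac{k_z(z)}{s_z(z)} = k_z(z)\bigl(1 - u(z) u^*(z)\bigr).
$$

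**The main step.** The only thing to check is that extremality of $F$ in $\H_k(\HE)$ — which is a property stated purely in terms of $\Mult(\H_k)$ and does not reference any particular CNP factor — is exactly the hypothesis needed to invoke Corollary \ref{extremal} (i). Inspecting the definition, $F$ is extremal precisely when $\langle \varphi F, F \rangle_{\H_k(\HE)} = \varphi(z_0)$ for all $\varphi \in \Mult(\H_k)$, and this is what Corollary \ref{extremal} uses; the CNP factor $s$ enters only through the conclusion $\|F(z)\|_\HE^2 \le k_z(z)/s_z(z)$. So no extra work is required here: the estimate above holds for \emph{every} admissible $u$ simultaneously.

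**Conclusion.** Taking the infimum of the right-hand side over all admissible $u$ — equivalently, the supremum of $u(z) u^*(z)$, which is by definition $\alpha_k(z)$ — yields
$$
\|F(z)\|_{\HE}^2 \le \bigl(1 - \alpha_k(z)\bigr) k_z(z),
$$
as claimed. (If there are no nonzero admissible $u$, then $\alpha_k(z) = 0$ and the statement reduces to the trivial bound $\|F(z)\|_\HE^2 \le k_z(z)$, which holds since $\|F\|_{\H_k(\HE)} = 1$ for an extremal $F$.) I do not anticipate any real obstacle: the argument is a direct packaging of two already-established corollaries, with the supremum in the definition of $\alpha_k$ doing the bookkeeping.
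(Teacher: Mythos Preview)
Your proposal is correct and follows essentially the same approach as the paper's own proof: both construct, for each admissible $u$, the normalized CNP kernel $s_w(\zeta)=\frac{1}{1-u(\zeta)u^*(w)}$, invoke Corollary~\ref{exCNPfactor}~(i) to get $k/s>>0$, apply Corollary~\ref{extremal}~(i) for the pointwise estimate, and then take the supremum over $u$ (with the trivial case handled separately).
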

\begin{proof} If $\Mult(\H_k(l^2),\H_k)$ contains only constant functions  there is nothing to prove. If there exists a non-zero $u\in \Mult(\H_k(l^2),\H_k)$ with $u(z_0)=0,~\|M_u\|\le 1$, then $s_w(z)=\frac1{1-u(z)u^*(w)}$ is a normalized CNP kernel and  Corollary \ref{exCNPfactor} (i) shows that $k/s>>0$. Then by Corollary \ref{extremal} every extremal function $F\in \H_k(\HE)$ satisfies
	$$\|F(z)\|_{\HE}^2\le (1-u(z)u^*(z))k_z(z),$$
	and the result follows.
	 \end{proof}
In some cases, the function $\alpha_k(z)$ can be easily  estimated. For example, if $\Omega=\B_d, ~z_0=0$, and $\H_k$ consists of analytic functions, it follows that  $\Mult(\H_k(l^2),\H_k)$ is contractively contained in the space of bounded analytic $\HB(l^2,\C)$-valued functions. Let
$u\in \Mult(\H_k(l^2),\H_k),~u(0)=0$, with supremum norm at most $1$, and let $z\in \B_d$ be fixed. Apply the maximum principle to the subharmonic function $$\lambda\mapsto \frac{uu^*(\lambda z)}{|\lambda|^2},\quad |\lambda z|<1,$$
to obtain that   $u(z)u^*(z)\le |z|^2$, i.e.  $\alpha_k(z)\le |z|^2$. Similarly,  if $\Omega=\D^d, ~z_0=0$, and $\H_k$ consists of analytic functions, it follows with the above argument that $\alpha_k(z)\le \max_j |z_j|$, where $z=(z_j)$. These estimates  continue to hold in  the case when $d=\infty$, and become equalities when the identity function belongs to the unit ball of $ \Mult(\H_k(l^2),\H_k)$ (i.e.\ when the multiplication operators by the coordinate functions form a row contraction on $\H_k$), respectively when multiplication by each coordinate is contractive.

\subsection{Multipliers} Let $s$ be a normalized CNP kernel on the nonvoid set $\Omega$, and let $k$ be a reproducing kernel on $\Omega$ with $k/s>>0$.
We are interested in  the space $\Mult(\H_s, \H_k(\HE))$. In most cases  we lack a complete characterization of such multipliers, and our aim is to discuss some sufficient conditions for a function to belong to this space.  Our conditions  are expressed in terms of the Sarason functions $V_F,~F\in \H_k(\HE)$.

\
Proposition \ref{a-factorization} turns out to be useful in this context. A direct application shows  that for  $F\in \H_k(\HE)$,
   $x>0$,  we have           $$\Phi_y=\frac1{V_F+x-iy} F \in \Mult(\H_s, \H_k(\HE)),$$
for all $y\in \R$,  and  the functions   are uniformly bounded in this  multiplier space.  Using this observation we can construct other functions  in $\Mult(\H_s, \H_k(\HE))$ in the following way.  Given  a finite Borel measure  $\mu$ supported on the imaginary axis, we let  $\hat{\mu}$ be  its Cauchy transform,
$$\hat{\mu}(z)=\int_{i\R}\frac{d\mu(iy)}{iy-z},\quad z\in \C\setminus i\R.$$
For $x>0$,  let   $\hat{\mu}_x(z)=\hat{\mu}(x+z)$  be defined in the right half-plane $\{\text{Re }z>0\}$. From above we obtain that   \be
\label{C-transf}\hat{\mu}_x(V_F)F\in  \Mult(\H_s, \H_k(\HE)),\ee
  for any
$F\in \H_k(\HE)$, any $x>0$ and any finite Borel  measure $\mu$ on the imaginary axis.

\
For our next application we need to recall the following notion. We say that the \emph{one-function corona theorem} holds for $\Mult(\H_s)$ if  $\varphi^{-1} \in \Mult(\H_s)$ whenever $\varphi \in \Mult(\H_s)$ and $\varphi$ is bounded below on $\Omega$. The condition is certainly fulfilled if the full corona theorem holds for $\Mult(\H_s)$, in the sense that evaluations at points of $\Omega$ are dense in the maximal ideal space of this algebra. For example, from results in \cite{CSW}, \cite{FangXia} and
 \cite[Theorem 5.4]{RiSunkes}  the one-function corona theorem holds for $\Mult(\H_s)$ if $\Omega=\B_d,~z_0=0$, and   $s_w(z)= (1-\langle z,w\rangle)^{-\gamma},~0< \gamma \le 1$, or $s_w(z)= \frac{1}{\langle z, w\rangle }\log \frac{1}{1-\langle z,w\rangle}$.
On the other hand,   \cite[Theorem 1.5]{AHMcR}   shows that the one-function corona theorem may fail even for (radial) normalized CNP kernels on the unit disc.

\begin{corollary}\label{suffmult}  Assume that  the one-function corona theorem holds for $\Mult(\H_s)$.
	If $F \in \H_k(\HE)$  and  its Sarason function  $V_F$  is  bounded in $\Omega$,
	then $F\in \Mult(\H_s,\H_k(\HE))$, and $V_F\in \Mult(\H_s)$.
\end{corollary}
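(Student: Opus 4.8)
The plan is to exploit Proposition \ref{a-factorization} with the parameter $a$ chosen to make the factor $1-\psi_a$ invertible as a multiplier, using the one-function corona theorem. Recall that for any $F \in \H_k(\HE)$ and any $a \in \C$ with $\Re a > 0$, Proposition \ref{a-factorization} gives $\psi_a = \frac{V_F - a}{V_F + \overline a} \in \Mult(\H_s)$, $\Phi_a = \frac{2}{V_F + \overline a} F \in \Mult(\H_s, \H_k(\HE))$, and the identity $F = \frac{\Re a}{1 - \psi_a} \Phi_a$. Rewriting, $F = \Re a \cdot (1-\psi_a)^{-1} \Phi_a$, and since $1 - \psi_a = \frac{a + \overline a}{V_F + \overline a} = \frac{2\Re a}{V_F + \overline a}$, we actually have the cleaner identity $F = (V_F + \overline a)\Phi_a / 2$ —  but the point is to realize $(1-\psi_a)^{-1}$ as a genuine multiplier of $\H_s$, which then forces $F$ to be a multiplier from $\H_s$ to $\H_k(\HE)$ as a product of multipliers.

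First I would note that $V_F$ is bounded in $\Omega$ by hypothesis, say $|V_F(z)| \le C$ for all $z \in \Omega$. I would also record from Corollary \ref{gen Harmonic Majorant} (or directly from Lemma \ref{main-lemma-sarason} applied at $z=w=z_0$, using $s_{z_0}=1$) that $\Re V_F \ge 0$ on $\Omega$; in particular $V_F$ takes values in the closed right half-plane intersected with the disc $\{|w| \le C\}$. Now choose the parameter $a = x$ to be a sufficiently large positive real number — specifically any $x > C$ will do. Then for all $z \in \Omega$,
\[
|\psi_x(z)| = \left| \frac{V_F(z) - x}{V_F(z) + x} \right|
\]
and since $V_F(z)$ lies in the closed right half-plane, $|V_F(z) + x| \ge |V_F(z) - x|$ — more strongly, because $|V_F(z)| \le C < x$ and $\Re V_F(z) \ge 0$, an elementary estimate gives $|V_F(z) + x|^2 - |V_F(z) - x|^2 = 4x\,\Re V_F(z) \ge 0$, so $|\psi_x| \le 1$, and one gets a strict separation $1 - |\psi_x(z)| \ge \delta > 0$ uniformly in $z$ provided $\Re V_F$ is bounded below — which it need not be. So instead I would estimate $|1 - \psi_x(z)| = \frac{2x}{|V_F(z) + x|} \ge \frac{2x}{x + C} > 0$, a uniform lower bound not requiring any control on $\Re V_F$. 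Hence $1 - \psi_x \in \Mult(\H_s)$ is bounded below on $\Omega$, and by the one-function corona theorem hypothesis, $(1-\psi_x)^{-1} \in \Mult(\H_s)$.

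The conclusion then follows quickly: $F = \frac{x}{1 - \psi_x}\Phi_x$ exhibits $F$ as the product of the scalar multiplier $x(1-\psi_x)^{-1} \in \Mult(\H_s)$ (which acts on $\H_k(\HE)$ as well, by Lemma \ref{general-criterion} since $\Mult(\H_s) \subset \Mult(\H_k)$) with the multiplier $\Phi_x \in \Mult(\H_s, \H_k(\HE))$; composing, $F = M_{x(1-\psi_x)^{-1}} \circ M_{\Phi_x} \in \Mult(\H_s, \H_k(\HE))$. Likewise $V_F = x\,\frac{1 + \psi_x}{1 - \psi_x}$, and since $1 + \psi_x \in \Mult(\H_s)$ and $(1-\psi_x)^{-1} \in \Mult(\H_s)$, we get $V_F \in \Mult(\H_s)$. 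I expect the only real subtlety to be the choice of $a$ and the verification that $1 - \psi_a$ is bounded below using \emph{only} the boundedness of $V_F$ and the positivity $\Re V_F \ge 0$ — in particular realizing that one should estimate $|1-\psi_a|$ directly rather than trying to bound $|\psi_a|$ away from $1$, since the latter would demand a positive lower bound on $\Re V_F$ that the hypotheses do not supply. Everything else is a routine composition of multipliers.
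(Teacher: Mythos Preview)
Your proof is correct and follows essentially the same route as the paper: apply Proposition~\ref{a-factorization}, observe that $1-\psi_a = \frac{2\Re a}{V_F+\overline a}$ is bounded below on $\Omega$ because $V_F$ is bounded, invoke the one-function corona hypothesis to invert it in $\Mult(\H_s)$, and then compose multipliers. The paper simply takes $a=1$ throughout --- no large parameter is needed, since $|1-\psi_1| = 2/|V_F+1| \ge 2/(\|V_F\|_\infty+1)$ already gives the uniform lower bound; your detour through ``large $x$'' and the preliminary attempt to bound $|\psi_x|$ away from $1$ are unnecessary but harmless.
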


\begin{proof} Apply Proposition \ref{a-factorization}  with $a=1$, or Theorem \ref{factorization theorem}  for $\|F\|_{ \H_k(\HE)}=1$, to conclude that $\psi=\frac{V_F-1}{V_F+1}\in \Mult(\H_s)$, hence $\frac2{V_F+1}=1-\psi\in \Mult(\H_s)$.  If $V_F$ is bounded, $\frac2{V_F+1}$ is bounded below in $\Omega$, hence by assumption it is invertible in $\Mult(\H_s)$, i.e., $V_F\in \Mult(\H_s)$.
Moreover, since $\frac1{V_F+1}F\in \Mult(\H_s,\H_k(\HE))$ and $V_F\in \Mult(\H_s)$, we obtain
$F\in \Mult(\H_s,\H_k(\HE))$.
\end{proof}

We remark that without the assumption that the one-function corona theorem holds,
the preceding corollary may fail. Indeed, by the results of \cite[Section 5]{AHMcR},
there exists a space $\H_s$ of continuous functions on $\overline{\mathbb D}$  with
a normalized CNP kernel $s$ such that $\Mult(\H_s) \subsetneq \H_s$ (called a Salas space there).
For every
$f \in \H_s \setminus \Mult(\H_s)$, the Sarason function $V_f$ is bounded since the multiplier
norm of $s_z$ is uniformly bounded over $z \in \overline{\mathbb D}$, even
though $f \notin \Mult(\H_s)$.

In several cases  the   condition that the Sarason function  is bounded in $\Omega$,   can be replaced by the weaker assumption that its real part  is bounded.  This   is certainly not sufficient to make $V_F$  a multiplier of $\H_s$, but it sometimes implies that  $F\in \Mult(\H_s,\H_k(\HE))$.

\
The simplest example of this type is when $s$ is the Szeg\"o kernel on $\D$, and $k=s,~\HE=\C$, hence $\H_s=\H_k=H^2$.  If
 $g$ is an unbounded analytic function in the unit disc with $0<a\le \text{ Re }g(z)\le b <\infty$, then there is a bounded outer function $f$ with $|f|^2=\text{ Re }g$ a.e. on the unit circle. Therefore, $f$ is a multiplier of $H^2$, but $V_f$, which agrees with $g$ up to an additive
 constant, is not.
\\
Another example is provided by a recent result in \cite{MaRa} which asserts that if $s$ is the unweighted Dirichlet kernel,  $k$ is the reproducing kernel in some weighted Bergman space $L_a^2(\mu)$ on $\D$,  and  $\text{Re }V_1$ is bounded in $\D$, then  $1\in \Mult(\H_s,\H_k)$, i.e. $\H_s$ is continuously contained in $\H_k$. In fact the argument used in  \cite{MaRa} is based on the very general Lemma 24 in \cite{ArSaRo} and can be extended to arbitrary normalized CNP kernels. For kernels on $\B_d$ of the form   $s_w(z)= \frac{1}{\langle z, w\rangle }\log \frac{1}{1-\langle z,w\rangle} $, or  $s_w(z)= (1-\langle z,w\rangle)^{-\gamma},~0< \gamma < 1$,  or more generally, if $s=s_1^\gamma,~0<\gamma<1$, for some analytic normalized CNP kernel $s_1$, and $k$ a weighted Bergman kernel,  then  $k/s>>0$ by Corollary \ref{exCNPfactor} (iii), and  the boundedness of $\text{Re }V_1$ obviously implies that $V_1$ is bounded, as $\Re s$ and $s$ are comparable for such kernels.
On the other hand,  this is no longer the case  when $s$ is the  Drury-Arveson kernel.

\
We are going to prove that  for a significant class of kernels $k$ and all normalized CNP kernels $s$ with $k/s>>0$, the condition  $\text{\rm Re }V_F$ bounded in $\Omega$, implies  that $F\in \Mult(\H_s,\H_k(\HE))$.\\
Our basic assumption is that the norm on $\H_k$ can be expressed with help of $L^2$-norms of linear differential operators.\\
To be more precise,  assume that $\Omega\subset \R^d$, let $\mu_1,\ldots,\mu_m$ be finite positive  Borel measures on $\Omega$, and for $1\le i\le m$ let  $\HL_i$ be a linear differential operator of the form
$$\HL_i=\sum_{|\alpha|\le N}a_{i,\alpha}\partial^\alpha,$$
where $N\in \N$ is fixed, the coefficients  $a_{i,\alpha}$ are $\mu_i$-measurable functions, and, as usual, $\partial^\alpha=\frac{\partial^{|\alpha|}}{\partial^{\alpha_1}x_1\ldots\partial^{\alpha_d}x_d}$.

\
Now assume that $\H_k$ contains a dense set $\HD$, such that the functions in $\HD$, together with all multipliers in $\Mult(\H_s)$ are continuous on $\Omega$ and have partial derivatives of order $\le N$  $\mu_i$-a.e.,
for $1\le i\le m$. Moreover, assume that there are absolute constants $c_1,c_2>0$ with
\be
\label{diff-op-norm} c_1\|f\|_{\H_k}^2\le  \sum_{i=1}^m\int_\Omega |\HL_if|^2d\mu_i\le c_2\|f\|_{\H_k}^2,\quad f\in \HD.\ee
 Note that in  this case each $\HL_i$ extends to a bounded linear operator from $\H_k$ into $L^2(\mu_i)$. If we denote these extensions  by $\tilde{\HL}_i$, it follows that
\be
\label{diff-op-norm-ext} \|f\|^2=\sum_{i=1}^m\int_\Omega |\tilde{\HL}_if|^2d\mu_i,\ee
defines an equivalent norm on $\H_k$.

Sobolev spaces provide standard examples of  such spaces. Also, most  of the examples considered in this paper satisfy these assumptions; weighted Bergman and Hardy spaces on $\B_d$, or $\D^d$, weighted Dirichlet spaces on the unit disc, or more generally weighted Besov spaces on $\B_d$, and in particular the Drury-Arveson spaces $H_d^2,~d\in \N$  (see \cite{ZZ}).

\begin{theorem}\label{diff-op-thm}
Let $\Omega, \mu_i, \HL_i,~1\le i\le m$,  $\H_k$ and $\HD$  be as above and assume that \eqref{diff-op-norm}  holds.  If  $F\in  \H_k(\HE)$ and $\text{\rm Re }V_F$  is bounded in $\Omega$,  then $F\in \Mult(\H_s,\H_k(\HE))$, and there exists a constant $c_N>0$ depending only on $N$,  such that
$$\|F\|_{\Mult(\H_s,\H_k(\HE))}\le c_N(\|\text{\rm Re }V_F\|_\infty +3)^{N+\frac1{2}}.$$
 \end{theorem}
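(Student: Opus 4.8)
The plan is to exploit Proposition~\ref{a-factorization}, which gives, for every $a\in\C$ with $\Re a>0$, a factorization $F=\frac{\Re a}{1-\psi_a}\Phi_a$ with $\psi_a=\frac{V_F-a}{V_F+\overline a}\in\Mult(\H_s)$ contractive and $\Phi_a=\frac{2}{V_F+\overline a}F\in\Mult(\H_s,\H_k(\HE))$, together satisfying $\|\psi_a h\|_{\H_s}^2+\Re a\,\|\Phi_a h\|_{\H_k(\HE)}^2\le\|h\|_{\H_s}^2$. Since $F=\frac{1}{2}(V_F+\overline a)\Phi_a$, it suffices to bound $\|M_{V_F+\overline a}\|$ as an operator on $\H_k$ (indeed on $\H_k(\HE)$, but by the discussion after \eqref{multiplieraction} it is enough to treat $\Mult(\H_s)$-norms and scalar multiplication operators), and then compose: $\|F\|_{\Mult(\H_s,\H_k(\HE))}\le\frac12\|V_F+\overline a\|_{\Mult(\H_s)}\cdot\|\Phi_a\|_{\Mult(\H_s,\H_k(\HE))}\le\frac{1}{2\sqrt{\Re a}}\|V_F+\overline a\|_{\Mult(\H_s)}$. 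The function $\frac{2}{V_F+\overline a}$ is already a contractive multiplier (it equals $1-\psi_a$), so the whole problem reduces to controlling the $\Mult(\H_s)$-norm of its reciprocal $\frac{V_F+\overline a}{2}$, and this is where the hypothesis \eqref{diff-op-norm} enters.

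The key point is that $W:=\frac{2}{V_F+\overline a}$ maps $\Omega$ into a fixed disc in the right half-plane: writing $b=\Re a$, one has $\Re(V_F+\overline a)\ge b$ by Corollary~\ref{gen Harmonic Majorant} (which gives $\Re V_F\ge 0$), so $|W|\le 2/b$, and moreover $W$ takes values in $\{\,\Re\frac{1}{w}\ge b/2\,\}$, i.e.\ in a disc of radius $1/b$ centered at $1/b$. Thus $W$ is a \emph{bounded} multiplier of $\H_s$ whose range avoids $0$ and stays in a half-plane; the obstacle is to invert it \emph{with norm control}, without assuming a one-function corona theorem. The device is to choose $a$ cleverly—set $\Re a=b$ comparable to $\|\Re V_F\|_\infty$, so that $W$ lands in a disc of the form $\{|w-1/b|\le 1/b\}$ that is contained in a sector of half-angle bounded away from $\pi/2$. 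On such a sector one can write $1/w$ as a power of $w$ composed with a bounded holomorphic function, but the clean route is: since $W$ is contractive and $\|1-\tfrac{b}{2}W^{-1}\|_\infty$ governs things, estimate $W^{-1}=\frac{V_F+\overline a}{2}$ by expanding $\frac{1}{W}=\frac{1}{c}\cdot\frac{1}{1-(1-W/c)}$ for a suitable constant $c$ with $\|1-W/c\|_\infty=:\rho<1$ — but $\rho$ can be arbitrarily close to $1$, so a geometric series in $\Mult(\H_s)$ does not converge. This forces the use of the structural hypothesis.

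Here is where \eqref{diff-op-norm} does the real work. Since $W\in\Mult(\H_s)$ is contractive and $\H_s\hookrightarrow\H_k$ contractively (Lemma~\ref{general-criterion}), $W$ acts boundedly on $\H_k$ too, so for $f\in\HD$ we may try to estimate $\|\,W^{-1}g\,\|_{\H_k}$ for $g\in W\HD$ via the differential-operator norm: applying the Leibniz rule, $\HL_i(W^{-1}g)$ is a sum of terms $\partial^\beta(W^{-1})\cdot\partial^\gamma g$ with $|\beta|+|\gamma|\le N$, and each derivative $\partial^\beta(W^{-1})$ of the reciprocal produces, by Faà di Bruno, a sum of at most $C_N$ products of derivatives $\partial^{\delta}W$ divided by powers of $W$ up to $W^{-(|\beta|+1)}$. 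Since $|W^{-1}|\le \|\Re V_F\|_\infty+\Re a+|\Im a|$ pointwise and the derivatives $\partial^\delta W$ are controlled by $\|W\|_{\text{relevant Sobolev}}\lesssim\|W\|_{\Mult(\H_s)}$-type bounds on $\HD$ via \eqref{diff-op-norm} applied to $W\cdot(\text{test functions})$, one collects a bound of the shape $\sum_i\int|\HL_i(W^{-1}g)|^2 d\mu_i\le c_N(\|\Re V_F\|_\infty+3)^{2N+1}\sum_i\int|\HL_i g|^2 d\mu_i$, and then \eqref{diff-op-norm-ext} converts this back to $\|W^{-1}g\|_{\H_k}\le c_N(\|\Re V_F\|_\infty+3)^{N+1/2}\|g\|_{\H_k}$. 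Taking $a=1$ (so $\Re a=1$) and combining with $\|\Phi_1\|_{\Mult(\H_s,\H_k(\HE))}\le 1$ and $F=\frac12(V_F+1)\Phi_1=W^{-1}\Phi_1$ yields the stated estimate $\|F\|_{\Mult(\H_s,\H_k(\HE))}\le c_N(\|\Re V_F\|_\infty+3)^{N+1/2}$. The main obstacle, as indicated, is carrying out the Leibniz/Faà-di-Bruno bookkeeping so that the power of $(\|\Re V_F\|_\infty+3)$ comes out exactly $N+\tfrac12$ and not worse, and justifying that the formal computation on the dense set $\HD$ transfers to all of $\H_k(\HE)$ — this is handled by the density of $\HD$ and boundedness of the operators involved, exactly as in the passage from \eqref{diff-op-norm} to \eqref{diff-op-norm-ext}.
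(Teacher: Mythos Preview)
Your proposal has a genuine gap at the crucial step. You assert that $|W^{-1}|=\tfrac12|V_F+\overline a|\le \|\Re V_F\|_\infty+\Re a+|\Im a|$ pointwise, but this requires $|\Im V_F|$ to be bounded, which is \emph{not} part of the hypothesis. The theorem assumes only that $\Re V_F$ is bounded on $\Omega$, and as the paper discusses explicitly just before the statement (the Szeg\H{o}-kernel example, and the remark that boundedness of $\Re V_F$ ``is certainly not sufficient to make $V_F$ a multiplier of $\H_s$''), the imaginary part of $V_F$ may well be unbounded. Consequently $W^{-1}=(V_F+\overline a)/2$ need not be a bounded function, and the Fa\`a di Bruno expansion of $\partial^\beta(W^{-1})$ produces terms with \emph{unbounded} prefactors $W^{-(|\beta|+1)}$. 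Your Leibniz bookkeeping therefore cannot even get started: the estimate $\|W^{-1}g\|_{\H_k}\lesssim(\|\Re V_F\|_\infty+3)^{N+1/2}\|g\|_{\H_k}$ has no chance of holding in general, since it would in particular force $V_F\in\Mult(\H_k)$, contradicting the examples. (A secondary issue is that your control of $\partial^\delta W$ via ``$\|W\|_{\Mult(\H_s)}$-type bounds'' is not substantiated: \eqref{diff-op-norm} gives $L^2(\mu_i)$-control of $\HL_i(Wf)$, not pointwise or $L^\infty$ control of individual partial derivatives of $W$, which is what a product bound of the type you need would require.)

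The paper circumvents exactly this obstacle by a different mechanism. Rather than fixing $a$ and inverting $W$, it lets $a=x+iy$ with $x\in[1,2]$ fixed and \emph{integrates} the estimate from Proposition~\ref{a-factorization} over $y\in\R$. The right-hand side collapses to $\pi\|h\|_{\H_s}^2$ exactly, while on the left the product rule expresses $\HL_r\bigl(\frac{1}{V_F+x-iy}hF\bigr)$ as $\sum_{j=0}^N(V_F+x-iy)^{-(j+1)}\HL_{rj}hF$; the integral in $y$ is then bounded from below at each $z$ by a Poisson-integral/subharmonicity argument, which recovers the value of an anti-analytic $\HE$-valued function at the point $w=V_F(z)+x+it$. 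This averaging over $y$ is precisely what handles the uncontrolled imaginary part of $V_F$. A final polynomial-interpolation step in the parameter $2x-it$ isolates the top-order term $\HL_r hF$ and yields the weighted estimate with $(\Re V_F+3)^{-(2N+1)}$, from which the theorem follows.
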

\begin{proof}
For   $0\le r\le m$,  let $\tilde{\HL}_r$ be  the extended differential operator  from \eqref{diff-op-norm-ext}. We shall prove that there exists a constant $C_N>0$, depending only on $N$,  such that whenever $F\in \H_k(\HE)$ and $h\in \H_s$ is a finite linear combination of reproducing kernels in $\H_s$, we have
\be
\label{key-est-diff}
\int_\Omega \frac{\|(\tilde{\HL}_r\otimes 1_\HE )h F\|_\HE^2}{(\text{Re }V_F+3)^{2N+1}}d\mu_r\le C_N\|h\|_{\H_s}^2. \ee
 Clearly, the theorem follows directly from this inequality.	\\
Note first that it will be sufficient to prove \eqref{key-est-diff} for $F$ in dense subset  of $\H_k(\HE)$, since $h\in \Mult(\H_s)\subset \Mult(\H_k(\HE))$, and  if $F_n\to F$ in $\H_k(\HE)$, then $V_{F_n}(z)\to V_F(z),~z\in \Omega$. Then \eqref{diff-op-norm-ext} together with Fatou's lemma show that the estimate holds for arbitrary $F\in \H_k(\HE)$ with the same constant $C_N$. Consequently, for a fixed orthonormal basis $\{e_n\}$ of $\HE$ we can consider $\HE$-valued functions $F$ of the form $$F= \sum_{n\le M}f_ne_n,$$
with $f_n\in\HD,~1\le n\le M$, the dense subset of $\H_k$ from our assumption. In this case $F$ has $\HE$-valued  partial derivatives of order $\le N$ $\mu_r$-a.e., and these are $\mu_r$-measurable.

\
Start with  the inequality in Proposition \ref{a-factorization} to obtain for $\text{Re }a>0$,
\begin{align*}4\text{\rm Re }a\left\|\frac{1}{V_{F}+\overline{a}}hF\right\|_{ \H_k(\HE)}^2 &\le \|h\|_{\H_s}^2-
\left\|\frac{V_{F}-a}{V_{F}+\overline{a}}h\right\|_{\H_s}^2\\&\le 4 \Re a \Re \left\langle \frac{1}{V_{F}+\overline{a}}h, h \right\rangle_{\H_s},\end{align*}
and from   \eqref{diff-op-norm}
$$\int_\Omega \left\|\HL_r\frac{1}{V_{F}+\overline{a}}hF\right\|_\HE^2d\mu_r\lesssim
\text{Re }\left\langle \frac{1}{V_{F}+\overline{a}}h,h \right\rangle_{\H_s}. $$
Now let $a=x+iy$, with $x>0$   fixed, but arbitrary.  Integration in $y$ yields
\begin{align}\label{crucial step}\int_\Omega\int_{-\infty}^\infty &\left\|\HL_r\frac{1}{V_{F}+x-iy}hF(z)\right\|_\HE^2dy\mu_r(z)\\&\nonumber \lesssim
\int_{-\infty}^{\infty}\text{Re }\left\langle \frac{1}{V_{F}+x-iy}h,h \right\rangle_{\H_s} dy. \end{align}
We claim that
\be
\label{right hand side} \int_{-\infty}^{\infty}\text{Re }\left\langle \frac{1}{V_{F}+x-iy}h,h \right\rangle_{\H_s} dy
=\pi\|h\|_{\H_s}^2.
\ee
 Indeed, if $h=\sum_jc_js_{z_j}$,
\begin{align*}&\int_{-T}^{T}\text{Re }\left\langle \frac{1}{V_{F}+x-iy}h,h \right\rangle_{\H_s} dy\\&=\frac1{2}\sum_{j,l} c_j\overline{c}_ls_{z_j}(z_l)
\int_{-T}^T\left( \frac{1}{V_{F}(z_l)+x-iy} + \frac{1}{\overline{V_{F}}(z_l)+x+iy}\right)dy\\&
=\sum_{j,l} c_j\overline{c}_l s_{z_j}(z_l)\frac1{2i}\left(\log \frac{V_{F}(z_l)+x +iT}{V_{F}(z_l)+x -iT} +
\log \frac{\overline{V_{F}}(z_j)+x +iT}{\overline{V_{F}}(z_j)+x -iT}\right)\\&\to \pi\sum_{j,l} c_j\overline{c}_ls_{z_l}(z_j),\quad T\to \infty,\end{align*}
and the claim follows by the monotone convergence theorem.\\
In order to estimate the left  hand side of \eqref{crucial step} from below, we note first that by assumption, $V_{F}$ has partial derivatives of order $\le N$ $\mu_r$-a.e., as $\frac{2}{V_F+1} \in \Mult(\H_s)$,
hence by the product rule we have $\mu_r$-a.e.
\begin{align*}\HL_r\frac{1}{V_{F}+x-iy}hF
&=\frac1{V_{F}+x-iy}\HL_rhF\\&+\sum_{j=1}^N\frac1{(V_{F}+x-iy)^{j+1}}\HL_{rj}hF,\end{align*}
where $\HL_{rj}$ are linear differential operators of order $\le N$ with $\mu_r$-measurable coefficients.\\
Now recall that  $y\mapsto\frac{\text{Re } V_{F}(z)+x}{\pi|V_{F}(z)+x+it -iy|^2},~z\in \Omega,t\in \R$ is the Poisson kernel for the right half-plane at the point $w=V_{F}(z)+x+it$, and note that for $2|t|\le x$
  $$\frac{\text{Re } V_{F}(z)+x}{\pi|V_{F}(z)+x+it -iy|^2} \le 2\frac{\text{Re } V_{F}(z)+x}{\pi|V_{F}(z)+x -iy|^2}.$$
 Moreover, for $\mu_r$-almost every $z\in \Omega$ the function
$$U_z(w)= \HL_rhF(z)+\sum_{j=1}^N\frac1{(V_{F}+x+\overline{w})^{j}}\HL_{rj}hF(z),\quad \text{Re }w>0,$$
is anti-analytic  in the right  half-plane, continuous and  bounded in its closure. Consequently, $\|U_z\|_\HE^2$ is subharmonic, bounded and continuous in the closure of the right half-plane, hence its Poisson integral is a (the least)  harmonic majorant in the right half-plane. In particular,
$$ \|U_z(V_{F}(z)+x+it)\|_\HE^2\le \int_{-\infty}^\infty\frac{\text{Re } V_{F}(z)+x}{\pi|V_{F}(z)+x+it-iy|^2}\|U_z(iy)\|_\HE^2dy.
$$
From the last two inequalities we obtain for all $x\in [1,2],~t\in [-\frac{1}{2},\frac{1}{2}]$, and  $\mu_r$-a.e. on $\Omega$
\begin{align*}&\int_{-\infty}^\infty \left\|\HL_r\frac{1}{V_{F}+x-iy}hF(z)\right\|_\HE^2dy\\&=\frac{\pi}{\text{Re } V_{F}(z)+x}\int_{-\infty}^\infty\frac{\text{Re } V_{F}(z)+x}{\pi|V_{F}(z)+x-iy|^2}\|U_z(iy)\|_\HE^2dy\\&
\ge \frac{\pi}{2(\text{Re } V_{F}(z)+x)}\int_{-\infty}^\infty\frac{\text{Re } V_{F}(z)+x}{\pi|V_{F}(z)+x+it-iy|^2}\|U_z(iy)\|_\HE^2dy\\&
 \ge
 \frac{\pi}{2(\text{Re } V_{F}(z)+x)}  \|U_z(V_{F}(z)+x+it)\|_\HE^2\\&
\ge \frac{\pi \|\sum_{j=0}^N(2\text{Re }V_{F}+2x-it)^{N-j}\HL_{rj}hF(z)\|_\HE^2}{2^{2N+1}(\text{Re } V_{F}(z)+3)^{2N+1}},\end{align*}
where  $\HL_{r0}=\HL_r$.  Thus, if $d\nu=\frac{d\mu_r}{(\text{Re }V_F+3)^{2N+1}}$,  this  estimate together with \eqref{right hand side} and \eqref{crucial step}  gives
$$ \left\|\sum_{j=0}^N(2\text{Re }V_F+2x-it)^{N-j} \mathcal L_{rj}hF\right\|_{L^2(\nu,\HE)}^2 \le A_N\|h\|_{\H_s}^2,$$
for some constant $A_N>0$ depending only on $N$, and for  all $2x-it \in [2,4]\times[-\frac1{2},\frac1{2}]$. In particular, for every $u\in L^2(\nu,\HE)$, the polynomial
$$p_u(w)=\left\langle\sum_{j=0}^N(2\text{Re }V_F+w)^{N-j}\mathcal L_{rj}hF, u\right\rangle_{L^2(\nu,\HE)},$$
satisfies
$$|p_u(2x-it)|
\le A_N^{1/2}\|u\|_{L^2(\nu,\HE)}\|h\|_{\H_s},\quad 2x-it \in [2,4]\times[-\frac1{2},\frac1{2}].$$
Then $$|p^{(N)}_u(3)|\le B_N\|u\|_{L^2(\nu,\HE)}\|h\|_{\H_s},$$
with $B_N>0$ depending only on $N$, which implies \eqref{key-est-diff} and completes the proof.
\end{proof}

The result is of interest even in the special case when $k=s$ and $\HE=\C$. Actually, the main motivation for this theorem was the Drury-Arveson kernel, and we record the corresponding result as an immediate application.

\begin{corollary}\label{Drury-Arveson} If  $s_w(z)=\frac1{1-\langle z,w\rangle}, ~z,w\in\B_d$, and  $f\in H_d^2$ satisfies $$\sup_{z\in \B_d}\text{\rm Re }\langle f, s_zf\rangle_{H_d^2}<\infty,$$
then $f\in \Mult(H_d^2)$.
\end{corollary}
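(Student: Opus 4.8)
The plan is to apply Theorem~\ref{diff-op-thm} with $\HE = \C$ and $k = s = s_w(z) = \frac{1}{1-\langle z,w\rangle}$ the Drury-Arveson kernel on $\B_d$. The hypothesis of the corollary is exactly that $\operatorname{Re} V_f$ is bounded in $\B_d$, where $V_f(z) = 2\langle f, s_z f\rangle_{H_d^2} - \|f\|_{H_d^2}^2$ is the Sarason function; since $\sup_z \operatorname{Re}\langle f, s_z f\rangle_{H_d^2} < \infty$ and $\|f\|^2$ is a fixed constant, this is immediate. So the only real work is to verify that the Drury-Arveson space $H_d^2$ fits the framework preceding Theorem~\ref{diff-op-thm}, i.e. that its norm is equivalent to one built from $L^2$-norms of linear differential operators of bounded order against finite positive Borel measures on $\B_d$ (or on $\overline{\B_d}$, viewed inside $\R^{2d}$).

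First I would recall the explicit weighted-Besov description of $H_d^2$: there is an integer $N$ (depending on $d$) and a weight, so that with $dV$ normalized Lebesgue measure on $\B_d$ and weight $(1-|z|^2)^{2N-d}$ one has $\|f\|_{H_d^2}^2 \asymp |f(0)|^2 + \sum_{|\alpha| = N}\int_{\B_d} |\partial^\alpha f(z)|^2 (1-|z|^2)^{2N-d}\, dV(z)$, together with finitely many lower-order boundary/interior correction terms; this is the standard fact cited as \cite{ZZ} in the paragraph before the theorem. Setting $m$ equal to the number of such terms, $\mu_i$ the corresponding finite measures, and $\HL_i$ the corresponding constant-coefficient differential operators $\partial^\alpha$ (plus the rank-one evaluation at $0$, which can be absorbed since that is a bounded functional), the two-sided estimate \eqref{diff-op-norm} holds on the dense set $\HD$ of polynomials (or functions analytic in a neighborhood of $\overline{\B_d}$). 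One also needs that all multipliers in $\Mult(H_d^2)$ are continuous on $\B_d$ with partial derivatives of order $\le N$ $\mu_i$-a.e.; this holds because multipliers of $H_d^2$ are bounded holomorphic functions on $\B_d$, hence smooth there, and the measures $\mu_i$ live on $\B_d$ (interior terms) — the boundary terms, if any, are handled by the same weighted-Besov identity restricted to the sphere. Once these verifications are in place, Theorem~\ref{diff-op-thm} yields directly that $f \in \Mult(H_d^2, H_d^2(\C)) = \Mult(H_d^2)$, with the norm bound $\|f\|_{\Mult} \le c_N(\|\operatorname{Re} V_f\|_\infty + 3)^{N + 1/2}$ as a bonus.

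The main obstacle — really the only nontrivial point — is pinning down the correct weighted-Besov/Sobolev realization of the $H_d^2$ norm and checking it genuinely has the form \eqref{diff-op-norm} with \emph{finite} measures and operators of uniformly bounded order. The subtlety is that the natural weight $(1-|z|^2)^{2N-d}$ is finite on $\B_d$, so $\mu_i$ is finite as required, but one must choose $N$ large enough (namely $2N > d-1$, so $N \ge \lceil d/2 \rceil$ works) for the weight to be integrable and for the identity to be exact up to equivalence; the lower-order terms must also be accounted for so that the \emph{lower} bound $c_1\|f\|^2 \le \sum_i \int |\HL_i f|^2 d\mu_i$ holds, not just the upper bound. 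This is entirely classical (see \cite{ZZ}), so I would cite it rather than reprove it, and the corollary then follows in one line.
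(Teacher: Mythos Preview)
Your proposal is correct and follows exactly the paper's approach: the corollary is recorded as an immediate application of Theorem~\ref{diff-op-thm} with $k=s$ the Drury-Arveson kernel and $\HE=\C$, the verification that $H_d^2$ fits the differential-operator framework \eqref{diff-op-norm} being delegated to the weighted-Besov description in \cite{ZZ}. Your write-up is actually more detailed than the paper's, which simply states that the Drury-Arveson spaces satisfy the assumptions (citing \cite{ZZ}) in the paragraph preceding the theorem and then records the corollary without further proof.
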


Recall from Equation \eqref{Harmonic Majorant} that $\frac{||s_z f||^2}{||s_z||^2} \le \Re V_f(z)$ for all $f \in \H_s$. While Corollary \ref{Drury-Arveson} shows that boundedness of $\Re V_f$
implies that $f \in \Mult(H^2_d)$, the main result of \cite{FX15} shows that boundedness of
$\frac{||s_z f||^2}{||s_z||^2}$ is not sufficient for $f$ to belong to $\Mult(H^2_d)$.

The natural question which arises is whether 
  $\text{Re }V_F$ is bounded for all $F\in \Mult(\H_s,\H_k(\HE))$?
As we shall see below, the answer is negative.

Another  direct consequence of   Theorem \ref{diff-op-thm} concerns the embedding of $\H_s$ into $\H_k$.
\begin{corollary}\label{embedding}
Let $\Omega, \mu_i, \HL_i,~1\le i\le m$  and $\H_k$  be as in Theorem \ref{diff-op-thm}. Let $s$ be a normalized CNP kernel such that $k/s>>0$. If  $1\in  \H_k$ and $\text{\rm Re }V_1$  is bounded in $\Omega$,  then $\H_s$ is continuously contained in $\H_k$.
 \end{corollary}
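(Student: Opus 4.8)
The plan is to obtain this as an immediate specialization of Theorem \ref{diff-op-thm}. Take $\HE = \C$, so that $\H_k(\HE) = \H_k$ and $\H_s(\HE) = \H_s$, and apply the theorem to the constant function $F = 1$. The hypotheses are met: by assumption $1 \in \H_k$; the data $\Omega, \mu_i, \HL_i, \H_k, \HD$ are exactly those appearing in Theorem \ref{diff-op-thm} and satisfy \eqref{diff-op-norm}; and $\Re V_1$ is bounded in $\Omega$ by hypothesis. Here the Sarason function $V_1$ is well defined because $k/s >> 0$ forces $s_z \in \Mult(\H_k)$ by Lemma \ref{general-criterion}, so that $V_1(z) = 2\langle 1, s_z \rangle_{\H_k} - \|1\|_{\H_k}^2$ makes sense.

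Theorem \ref{diff-op-thm} then yields $1 \in \Mult(\H_s, \H_k)$, together with the quantitative estimate $\|1\|_{\Mult(\H_s, \H_k)} \le c_N (\|\Re V_1\|_\infty + 3)^{N + 1/2}$. It remains only to unwind what membership in this multiplier space means. By definition, $1 \in \Mult(\H_s, \H_k)$ says precisely that $h \mapsto 1 \cdot h = h$ defines a bounded operator $M_1 \colon \H_s \to \H_k$; that is, every $h \in \H_s$ also lies in $\H_k$, with $\|h\|_{\H_k} \le \|M_1\|\,\|h\|_{\H_s}$. This is exactly the assertion that $\H_s$ is continuously contained in $\H_k$, and one even gets the explicit bound $c_N(\|\Re V_1\|_\infty + 3)^{N + 1/2}$ for the embedding constant.

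There is essentially no obstacle here beyond the bookkeeping just described; all of the analytic content has already been carried out in the proof of Theorem \ref{diff-op-thm}. The only point worth spelling out is the observation that the scalar multiplier $1$, regarded as an element of $\Mult(\H_s, \H_k)$, is nothing other than the inclusion map, which is what converts the conclusion "$F = 1 \in \Mult(\H_s, \H_k)$" of the theorem into the continuous-containment statement of the corollary.
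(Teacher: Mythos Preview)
Your proof is correct and matches the paper's approach: the corollary is stated as a ``direct consequence'' of Theorem~\ref{diff-op-thm} with no further argument given, and your specialization $\HE=\C$, $F=1$ is exactly what is intended. The additional remarks you include (well-definedness of $V_1$, and the identification of $M_1$ with the inclusion map) are helpful clarifications but not required by the paper.
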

As pointed out in the Introduction, if $\H_k$ is a weighted Bergman space, as defined in subsection 4.1,
this extends the result in \cite{MaRa}.  The general framework for Carleson  embeddings involves $L^2$-spaces rather than  Bergman spaces, but the additional step is trivial  in many cases. For example, if $s$ is an analytic  normalized CNP kernel on $\B_d$ and $\mu$ is an arbitrary finite positive Borel measure on $\B_d$ satisfying
\be
\label{re v1} \sup_{z\in \B_d}\text{Re } \int_{\B_d} s_z d\mu < \infty, \ee
then  Corollary \ref{embedding} easily implies that $\mu$ is a Carleson measure for $\H_s$.   Indeed,  note that if $v$ denotes  the Lebesgue measure on $\B_d$, then $\H_s$ is continuously embedded in $L_a^2(\mu_0)$, where $$d\mu_0(z) =\max_{|w|=|z|}s_w(w)^{-1/2}dv(z).$$
Then $\H_k=L_a^2(\mu_0+\mu)$ is a weighted Bergman space with $1\in \H_k$ and since $\mu_0$ is radial, $$\int_{\B_d}s_zd\mu_0=\mu_0(\B_d),$$ hence by \eqref{re v1} it follows
that $\text{Re }V_1$  bounded in $\B_d$.
Thus $\H_s$ is continuously contained in $\H_k$ and consequently, it is also continuously contained in $L^2(\mu)$. In concrete cases, \eqref{re v1} together with the method in \cite{MaRa} can be used to derive one-box conditions for  Carleson measures for $\H_s$.

\subsection{Sarason functions in $D(\mu)$-spaces}  In the general context considered above it is difficult to compute the Sarason function, or even to estimate its real part. There is an important class of spaces where the second  problem appears more tractable due to the work of Shimorin (\cite{Shimorin},\cite{Shimorin2}).

The local Dirichlet integral of  $f\in H^2$ at $\zeta\in \overline{\D}$ is defined by
\begin{equation*} D_\zeta(f) = \int_{\T} \left| \frac{f(z) - f(\zeta)}{z - \zeta} \right|^2 d m(z).\end{equation*} 
Here $m$ denotes the normalized arclength measure on the unit circle $\T$, and if $\zeta\in \T$, the value $f(\zeta)$ is the nontangential limit of $f$ at $\zeta$ which exists whenever $D_\zeta(f)$ is finite (see \cite{RiSundberg}). Given a finite positive Borel measure on $\overline{\D}$,  $D(\mu)$ is the Hilbert space consisting of all $H^2$-functions  $f$ with $$\|f\|_{D(\mu)}^2=\|f\|_{H^2}^2+\int_{\overline{\D}} D_\zeta(f)d\mu(\zeta)<\infty.$$
These norms can be expressed with help of the first derivative. We have (see \cite{AA}, \cite{Richter})
$$\|f\|_{D(\mu)}^2=\|f\|_{H^2}^2+\int_\D |f'(z)|^2U_\mu(z)dA(z),$$
where $$U_\mu(z)=\int_\D\log\left|\frac{1-\overline{\zeta}z}{z-\zeta}\right|d\mu(\zeta)+\int_{\T}\frac{1-|z|^2}{|1-\overline{\zeta}z|^2}d\mu(\zeta).$$
$D(\mu)$-spaces appeared first in \cite{Richter} for measures supported on $\T$ in connection with functional models for two-isometries. The general case was considered in \cite{AA}. The most common examples are the standard weighted Dirichlet  spaces $D_\alpha,~0\le \alpha<1$, consisting of analytic  functions in $\D$ with \be
\label{deriv-norm-D}\|f\|_\alpha^2=\|f\|_{H^2}^2+\int_\D|f'(z)|^2(1-|z|^2)^\alpha dA(z)<\infty.\ee
If  $\mu_0=m$, and for $0<\alpha<1$, 
$d\mu_\alpha=-(1-|z|^2)\Delta(1-|z|^2)^\alpha dA$, where $\Delta$ denotes the Laplacian, we   have  $D_\alpha=D(\mu_\alpha)$, with equality of norms.

Shimorin proved in \cite{Shimorin2} that for every finite positive Borel measure on $\overline{\D}$, the reproducing kernel $s^\mu$ in  $D(\mu)$ is a normalized CNP kernel. Moreover, in \cite{Shimorin},  Proposition 3 and Corollary 4,  he showed that for every $f\in D(\mu)$ we have 
  \begin{align}\label{Shimorin_formula}
    \Re V_f(z)& = \int_\T \frac{(1-|z|^2)|f(\zeta)|^2}{|1-\overline{\zeta}z|^2}dm(\zeta)\\&\nonumber + \int_{\overline{\D}} (2 \Re s_z^\mu(\zeta) - 1) D_\zeta(f) \, d\mu (\zeta).
  \end{align}

We shall use this remarkable identity to show that the converse of Theorem \ref{diff-op-thm} fails in $D_\alpha,~0<\alpha<1$.
\begin{prop}\label{unbounded sarason} For $0<\alpha<1$, 
 there exists  $u\in \Mult(D_\alpha)$ such that $\Re V_u$ is unbounded in $\D$.\end{prop}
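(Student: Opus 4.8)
The plan is to produce a single Blaschke product. Fix $\zeta_0=1$ and let $u=B$ be the Blaschke product whose zeros are the interpolating sequence $a_m=1-2^{-m}$ ($m\ge1$), which tends radially to $\zeta_0$; I will show that $B\in\Mult(D_\alpha)$ while $\Re V_B$ is unbounded along the radial sequence $z_k=1-2^{-k}$. Two auxiliary estimates are needed. First, a lower bound for the kernel: since $D_\alpha=D(\mu_\alpha)$ with equality of norms, $\|z^n\|_{D_\alpha}^2\asymp n^{1-\alpha}$, so the normalized CNP kernel $s^{\mu_\alpha}$ of $D_\alpha$ satisfies $s^{\mu_\alpha}(\zeta,z)=\sum_n\|z^n\|_{D_\alpha}^{-2}(\overline z\zeta)^n=c_\alpha(1-\overline z\zeta)^{-\alpha}+O(1)$ uniformly on $\D\times\D$ with $c_\alpha>0$ (polylogarithm asymptotics); since $\Re(1-\overline z\zeta)>0$ on $\D\times\D$, so that $|\arg(1-\overline z\zeta)|<\pi/2$, this gives $2\Re s^{\mu_\alpha}_z(\zeta)-1\ge c'_\alpha|1-\overline z\zeta|^{-\alpha}-C_0\gtrsim|1-\overline z\zeta|^{-\alpha}$ once $|1-\overline z\zeta|$ is below a fixed threshold, and moreover $2\Re s^{\mu_\alpha}_z(\zeta)-1\ge0$ throughout, because $s^{\mu_\alpha}$ is a normalized CNP kernel (write $s^{\mu_\alpha}_z(\zeta)=(1-b_z(\zeta))^{-1}$ with $|b_z(\zeta)|<1$, so the quantity equals $(1-|b_z(\zeta)|^2)|1-b_z(\zeta)|^{-2}$). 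Second, I would control the local Dirichlet integral of $B$: from $D_\zeta(b_a)=(1-|a|^2)|1-\overline a\zeta|^{-2}$ (valid for all $\zeta\in\overline\D$), the elementary inequality $D_\zeta(fg)^{1/2}\le\|g\|_\infty D_\zeta(f)^{1/2}+|f(\zeta)|D_\zeta(g)^{1/2}$, and iteration over the Blaschke factors, one gets $D_\zeta(B)^{1/2}\le\sum_m(1-|a_m|^2)^{1/2}|1-\overline{a_m}\zeta|^{-1}\asymp2^{j/2}$ whenever $|1-\zeta|\asymp2^{-j}$; hence $D_\zeta(B)\lesssim|1-\zeta|^{-1}$ near $\zeta_0$ and $D_\zeta(B)\lesssim1$ away from it. For the matching lower bound, at a zero $D_{a_m}(B)\ge(1-|a_m|^2)|B'(a_m)|^2\gtrsim2^m$, using $|B'(a_m)|=|B^{(m)}(a_m)|(1-|a_m|^2)^{-1}$ with $\inf_m|B^{(m)}(a_m)|>0$ ($B^{(m)}:=\prod_{\ell\ne m}b_{a_\ell}$, uniform separation); by continuity $D_\zeta(B)\gtrsim2^m$ on a Whitney box $Q_m$ about $a_m$.

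Next I would verify $B\in\Mult(D_\alpha)$. Clearly $\|B\|_\infty=1$, and by the standard sufficient condition for multipliers of a $D(\mu)$-space (which follows from the inequality for $D_\zeta(fg)$ above), it suffices that $\nu:=D_\zeta(B)\,d\mu_\alpha(\zeta)$ be a Carleson measure for $D_\alpha$. Since $\mu_\alpha\asymp(1-|\zeta|^2)^{\alpha-1}\,dA$, one has $\mu_\alpha(Q_m)\asymp2^{-m(1+\alpha)}$ and hence $\nu(Q_m)\asymp2^{-m\alpha}$, while $\nu$ has bounded density with respect to $\mu_\alpha$ away from $\zeta_0$; the latter part is harmless, and for the part near $\zeta_0$, because the $Q_m$ lie along a single radius, it reduces to the estimate $\sum_m2^{-m\alpha}|f(a_m)|^2\lesssim\|f\|_{D_\alpha}^2$, which holds because $\{a_m\}$ is a Carleson sequence for $D_\alpha$ (the one-box sums satisfy $\sum_{a_m\in S(I)}(1-|a_m|^2)^\alpha\asymp|I|^\alpha$, and for a separated sequence this is equivalent to the Carleson property). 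Thus $B\in\Mult(D_\alpha)$.

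Finally, for the unboundedness of $\Re V_B$: as $B$ is inner, $|B|=1$ a.e.\ on $\T$, so the boundary term in Shimorin's formula \eqref{Shimorin_formula} is the Poisson integral of $1$, i.e.\ $\equiv1$, and therefore $\Re V_B(z)=1+\int_\D(2\Re s^{\mu_\alpha}_z(\zeta)-1)\,D_\zeta(B)\,d\mu_\alpha(\zeta)$. For $z=z_k$ and $\zeta\in Q_m$ one has $|1-\overline{z_k}\zeta|\asymp2^{-m}$ when $m\le k$, so there is $m_0$ (independent of $k$) such that $2\Re s^{\mu_\alpha}_{z_k}(\zeta)-1\gtrsim2^{m\alpha}$ for $m_0\le m\le k$ and $\zeta\in Q_m$; discarding the rest of the nonnegative integrand,
\[ \Re V_B(z_k)\ \ge\ 1+\sum_{m=m_0}^{k}\int_{Q_m}(2\Re s^{\mu_\alpha}_{z_k}(\zeta)-1)\,D_\zeta(B)\,d\mu_\alpha(\zeta)\ \gtrsim\ 1+\sum_{m=m_0}^{k}2^{m\alpha}\cdot2^m\cdot2^{-m(1+\alpha)}\ \asymp\ k, \]
which tends to infinity. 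Hence $u=B$ has $\Re V_u$ unbounded in $\D$.

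The delicate point is the Carleson-measure verification for $\nu$ in the middle step: for $0<\alpha<1$ the space $D_\alpha$ is of Dirichlet type, so a single-box condition does not in general characterize Carleson measures, and the argument genuinely uses that the zeros $a_m$ are sparse at each scale (one Whitney box per dyadic annulus, along one radius) to reduce to the one-box estimate. The kernel expansion — the polylogarithm asymptotics together with the sign of the real part of the singular term, which is what makes that term cooperate rather than cancel — and the two-sided control of $D_\zeta(B)$ are routine but should be carried out with some care; everything else is bookkeeping with Shimorin's identity.
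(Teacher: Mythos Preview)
Your route is genuinely different from the paper's, and the divergence is in how multiplier membership is obtained. The paper does not take $u=B$; it sets $f=1-1/s_1^\alpha$, observes that $f$ is a contractive multiplier of $D_\alpha$ (directly from the CNP structure of $s^\alpha$), and uses that $(M_f^*)^n\to 0$ strongly to get a contractive $H^\infty$-functional calculus for $M_f$. Then $u=B\circ f\in\Mult(D_\alpha)$ with $\|M_u\|\le 1$ for \emph{any} $B\in H^\infty$, with no Carleson-measure analysis at all. Unboundedness is then read off from a lemma showing $\|u\|_\alpha^2+\sup\Re V_u\gtrsim\int_S|u'|^2\,dA$ over a Stolz-type sector $S$ at $1$, and the chain rule plus growth of $s_1^\alpha$ make this integral diverge for an interpolating $B$. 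The composition trick is precisely what your proposal would buy if it worked: it replaces your Carleson-measure step by a one-line functional-calculus argument.

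The gap in your proposal is the verification that $\nu=D_\zeta(B)\,d\mu_\alpha$ is Carleson for $D_\alpha$. Two issues. First, your reduction to the discrete estimate $\sum_m 2^{-m\alpha}|f(a_m)|^2\lesssim\|f\|_{D_\alpha}^2$ is not justified: the upper bound $D_\zeta(B)\lesssim|1-\zeta|^{-1}$ holds on the whole shell $\{|1-\zeta|\asymp 2^{-j}\}$, not just on the Whitney box $Q_j$ lying on the radius, so what you actually need is that the continuous measure $|1-\zeta|^{-1}(1-|\zeta|)^{\alpha-1}\,dA$ is Carleson for $D_\alpha$; replacing this by point masses at the $a_m$ requires a discretization lemma you have not supplied (and $|f|$ is not comparable to $|f(a_m)|$ across the full shell). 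Second, your parenthetical assertion that for a separated sequence the one-box condition is equivalent to the Carleson property is false for $D_\alpha$ with $0<\alpha<1$; this is the $H^2$ picture, whereas $D_\alpha$ requires Stegenga's capacitary condition. It \emph{does} reduce to the one-box condition when the sequence accumulates at a single boundary point---because then only arcs through that point matter in Stegenga's test---but this extra argument is needed and is exactly what makes the step ``delicate'', as you note. Your kernel asymptotics and the unboundedness computation via Shimorin's identity are on the right track and match the paper's use of that formula.
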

 The result holds for $\alpha=0$ as well, but the proof is more involved and will be omitted. \\The proof of the  proposition requires some preliminary observations.  Throughout in what follows we shall assume that $0<\alpha<1$, and denote by $s^\alpha$ the reproducing kernel in $D_\alpha$. Note that this is a radial kernel, i.e.  
\be
\label{s_w}s^\alpha_w(z)=1+\sum_{n=1}^\infty c_n^\alpha (\overline{w}z)^n,\quad c_n^\alpha\ge 0,\, c_n^\alpha\sim (n+1)^{\alpha-1}.\ee
Therefore we can consider the analytic function $$s^\alpha_1(z)=1+\sum_{n=1}^\infty c_n^\alpha z^n,\quad z\in\D.$$
Some properties of this  function are listed below.

\begin{lemma}\label{properties of s_1} (i) We have  $\Re s_1^\alpha (z)> \frac1{2}$ for all $z\in \D$, and $1-\frac1{s_1^\alpha}$ is a contractive multiplier of $D_\alpha$.\\
	 (ii) $ s_1^\alpha$ and $(s_1^\alpha)'$ are positive on  $[0,1)$ and   
	$$s_1^\alpha(r)\sim (1-r)^{-\alpha},\quad (s_1^\alpha)'(r)\sim (1-r)^{-\alpha-1},$$
where the constants involved depend only on $\alpha$.\\
	(iii) There exist $\varepsilon_\alpha \in (0,1)$ and $\delta_\alpha>0$ such that 
	$$\Re s_1^\alpha(z) \ge \delta_\alpha (1-|z|)^{-\alpha},$$
	whenever $z$ belongs to the set  $S=\{z\in \D:~|z-|z||<\varepsilon_\alpha(1-|z|)\}$.
	\end{lemma}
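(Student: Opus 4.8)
\textbf{Plan for the proof of Lemma \ref{properties of s_1}.}
The three parts should be handled in order, each building on the previous. For part (i), the key is that $s_1^\alpha$ is a normalized CNP kernel evaluated ``on the diagonal at $w=1$'' in the sense that $s_w^\alpha(z)$ has nonnegative Taylor coefficients $c_n^\alpha\ge 0$ with $c_0^\alpha=1$; summing the positive-definiteness inequality I expect $\Re s_1^\alpha(z) = \tfrac12 + \tfrac12\big(1 + 2\sum_{n\ge 1} c_n^\alpha \Re z^n\big)$, and since the power series $1 + \sum c_n^\alpha \zeta^n = s^\alpha$ has positive coefficients with $s^\alpha_w(z)s^\alpha_w(z)^{-1}$ structure, I would argue $\Re s_1^\alpha > \tfrac12$ either by invoking the Herglotz-type representation coming from $s^\alpha$ being a CNP kernel, or more concretely by noting $2s_1^\alpha - 1 = 2\langle k_z, k_z\rangle$-style expression has nonnegative real part (this is exactly the statement $2\Re s_z^\mu(\zeta)-1 \ge 0$ specialized, or the fact that $\Re\frac{1+\psi}{1-\psi}\ge 0$ from the theory). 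Once $\Re s_1^\alpha>\tfrac12$ is established, the map $z\mapsto 1 - 1/s_1^\alpha(z)$ is the Cayley-type transform $\psi = \frac{V-1}{V+1}$ with $V = 2s_1^\alpha - 1$, which has $\Re V > 0$, hence $\psi$ maps into $\D$; since $s^\alpha$ is a normalized CNP kernel, Corollary \ref{gen Harmonic Majorant} or the construction in Theorem \ref{factorization theorem} (applied to $F=1\in D_\alpha$, noting $s_1^\alpha = \langle 1, s_z 1\rangle$-type object) gives that $\psi \in \Mult(D_\alpha)$ contractively.

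For part (ii), positivity of $s_1^\alpha$ and $(s_1^\alpha)'$ on $[0,1)$ is immediate from the nonnegativity of the coefficients $c_n^\alpha$. The asymptotics $s_1^\alpha(r)\sim (1-r)^{-\alpha}$ and $(s_1^\alpha)'(r)\sim (1-r)^{-\alpha-1}$ follow from the standard Tauberian/Abelian estimate: if $c_n^\alpha \sim (n+1)^{\alpha-1}$ (as recorded in \eqref{s_w}), then $\sum c_n^\alpha r^n \sim \Gamma(\alpha)(1-r)^{-\alpha}$ as $r\to 1^-$ — this is the classical fact that a power series with coefficients regularly varying of index $\alpha-1$ has the stated radial growth — and differentiating term by term gives coefficients $\sim n^\alpha$, hence the derivative grows like $(1-r)^{-\alpha-1}$. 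I would cite a standard reference (e.g. Zygmund, or a power-series Tauberian theorem) rather than reprove this; the two-sided bounds come from the two-sided bound $c_n^\alpha \sim (n+1)^{\alpha-1}$ with explicit constants.

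For part (iii), I want to upgrade the radial estimate $\Re s_1^\alpha(r) = s_1^\alpha(r) \sim (1-r)^{-\alpha}$ from part (ii) to a neighborhood of the segment $[0,1)$. The natural tool is the derivative bound: for $z$ with $|z - |z|| < \varepsilon(1-|z|)$, write $z = r + w$ with $r = |z|$ (approximately) and $|w| < \varepsilon(1-r)$, and estimate $|s_1^\alpha(z) - s_1^\alpha(r)| \le |w|\, \sup |(s_1^\alpha)'|$ along the segment from $r$ to $z$. Since that segment stays within distance $\varepsilon(1-r)$ of $r$, the points on it have $1 - |\cdot| \gtrsim (1-r)$, so by part (ii) $|(s_1^\alpha)'| \lesssim (1-r)^{-\alpha-1}$ there, giving $|s_1^\alpha(z) - s_1^\alpha(r)| \lesssim \varepsilon (1-r)^{-\alpha}$. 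Choosing $\varepsilon_\alpha$ small enough that this error is less than half of $s_1^\alpha(r)\sim (1-r)^{-\alpha}$ yields $\Re s_1^\alpha(z) \ge \tfrac12 s_1^\alpha(r) \ge \delta_\alpha (1-|z|)^{-\alpha}$, using $1-|z| \asymp 1-r$. \textbf{The main obstacle} I anticipate is getting the constants in parts (ii) and (iii) to be uniform and correctly tracked — in particular making sure the Abelian estimate for the derivative is genuinely two-sided (the upper bound on $|(s_1^\alpha)'(r)|$ needs $c_n^\alpha \le C (n+1)^{\alpha-1}$, which is part of the hypothesis $c_n^\alpha\sim(n+1)^{\alpha-1}$) and that the implied constant in $|z - |z|| < \varepsilon(1-|z|)$ comparisons between $1-|z|$ and $1-r$ along the segment is handled cleanly; everything else is routine.
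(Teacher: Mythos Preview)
Your plans for parts (ii) and (iii) are correct and essentially identical to the paper's argument: part (ii) is exactly the Abelian estimate you describe, and part (iii) is the mean-value bound along the segment from $|z|$ to $z$ using the global upper bound $|(s_1^\alpha)'(z)|\le M_\alpha(1-|z|)^{-\alpha-1}$ (which does follow from $c_n^\alpha\lesssim (n+1)^{\alpha-1}$, as you note).

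The gap is in part (i), specifically in the contractive-multiplier claim. Your proposal to obtain $\psi=1-1/s_1^\alpha$ from Theorem~\ref{factorization theorem} applied to $F=1\in D_\alpha$ does not work: in the setting $k=s=s^\alpha$ the Sarason function of $F=1$ is
\[
V_1(z)=2\langle 1,\,s^\alpha_z\cdot 1\rangle_{D_\alpha}-1=2\cdot 1(z)-1=1,
\]
because $s^\alpha_z$ is the reproducing kernel of $D_\alpha$; this yields $\psi\equiv 0$, not $1-1/s_1^\alpha$. More generally, the factorization theorem does not say that an \emph{arbitrary} analytic $V$ with $\Re V>0$ produces a contractive multiplier $\frac{V-1}{V+1}$; it says this only when $V$ is the Sarason function of some unit vector. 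So ``$\Re(2s_1^\alpha-1)>0$'' by itself does not give the multiplier norm bound.

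The paper's route is to first establish both claims for $s_w^\alpha$ with $w\in\D$ and then pass to the limit $w\to 1$. Writing $s_w^\alpha(z)=\bigl(1-u(z)u^*(w)\bigr)^{-1}$, one has $1-1/s_w^\alpha(z)=u(z)u^*(w)=\sum_n\overline{u_n(w)}\,u_n(z)$, and Lemma~\ref{CNP}\,(i) applied with $h_n=\overline{u_n(w)}h$ gives
\[
\bigl\|(1-1/s_w^\alpha)\,h\bigr\|_{D_\alpha}^2\le\Bigl(\sum_n|u_n(w)|^2\Bigr)\|h\|_{D_\alpha}^2<\|h\|_{D_\alpha}^2,
\]
so $1-1/s_w^\alpha$ is a contractive multiplier and (equivalently) $\Re s_w^\alpha(z)>\tfrac12$. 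Letting $w\to 1$ and applying Fatou's lemma in the derivative-integral norm \eqref{deriv-norm-D} yields the contractive multiplier property for $1-1/s_1^\alpha$. If you prefer to stay within the factorization framework, the correct choice is $F_w=s_w^\alpha/\|s_w^\alpha\|$, whose Sarason function is $V_{F_w}(z)=2s_w^\alpha(z)-1$ (a one-line computation using $\langle s_w^\alpha,\,s_z^\alpha s_w^\alpha\rangle=s_w^\alpha(z)\,s_w^\alpha(w)$), so Theorem~\ref{factorization theorem} gives $\psi_w=1-1/s_w^\alpha$ contractive, and then the same limit argument applies.
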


\begin{proof} 
(i) Since  $s^\alpha$ is a normalized CNP kernel,  by definition and Lemma \ref{CNP} (i) it follows that
both statements are true with $s_w^\alpha$ in place of $s_1^\alpha$. Then the assertion follows by letting $w\to 1$ and using Fatou's lemma in \eqref{deriv-norm-D}.

(ii) is a straightforward application of \eqref{s_w} combined with the standard fact that for $s < 1$,
\begin{equation*}
  \sum_{n=1}^\infty n^{-s} r^n \sim (1-r)^{s-1}
\end{equation*}
as $r \to 1$.

(iii) The same  straightforward estimate in \eqref{s_w} gives $$ (s_1^\alpha)'(z)\le M_\alpha (1-|z|)^{-\alpha-1}, \quad z\in \D$$
for some constant $M_\alpha > 0$.
Now if    $0<\varepsilon<1$ and $|z-|z||<\varepsilon(1-|z|)$, we have
\begin{align*}
|\Re s_1^\alpha(z)- \Re s^\alpha_1(|z|)|& \le |z-|z|| \sup_{t\in [0,1]} |(s_1^\alpha)'(t|z|+(1-t)z)|  \\&\le M_\alpha
\varepsilon(1-|z|)(1-|z|)^{-\alpha-1}\\&= M_\alpha \varepsilon(1-|z|)^{-\alpha}.
\end{align*}
Using (ii),  we choose $\varepsilon_\alpha\in (0,1)$ such that $$\varepsilon_\alpha M_\alpha (1-|z|)^{-\alpha}
\le \frac1{2}\Re s_1^\alpha(|z|)$$ for all $z \in \D$,
and the result follows. 
\end{proof}

The key step for our construction is the following estimate derived from Shimorin's identity.

\begin{lemma}\label{estimate-below}  Let $S$  be the set in Lemma \ref{properties of s_1} (iii).  There exists $c_\alpha>0$ such that for all $f\in D_\alpha$,
$$\|f\|_\alpha^2+\sup_{z\in\D}\Re V_f(z) \ge c_\alpha \int_{S} |f'(\zeta)|^2 dA(\zeta).$$	
\end{lemma}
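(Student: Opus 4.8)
The plan is to use Shimorin's identity \eqref{Shimorin_formula} to bound $\sup_z \Re V_f(z)$ from below by a suitable integral of $D_\zeta(f)$ against $\mu_\alpha$, and then translate that lower bound into one involving $\int_S |f'|^2\,dA$. First I would recall that $D_\alpha = D(\mu_\alpha)$ with $d\mu_\alpha = -(1-|z|^2)\Delta(1-|z|^2)^\alpha\,dA$, a positive measure absolutely continuous with respect to area measure with a density comparable to $(1-|\zeta|^2)^{\alpha-1}$. From \eqref{Shimorin_formula}, dropping the nonnegative $H^2$-Poisson term, we get for every fixed $z\in\D$
$$\Re V_f(z) \ge \int_{\overline{\D}} (2\Re s_z^\alpha(\zeta) - 1)\, D_\zeta(f)\, d\mu_\alpha(\zeta),$$
and since $\|f\|_\alpha^2 \ge \int D_\zeta(f)\,d\mu_\alpha(\zeta)$, adding $\|f\|_\alpha^2$ absorbs the $-1$ and any region where $2\Re s_z^\alpha(\zeta)-1$ is negative; indeed $2\Re s_z^\alpha - 1 \ge -1$ pointwise (as $\Re s_z^\alpha > \tfrac12$ by Lemma~\ref{properties of s_1}(i) applied to $s_z^\alpha$, or just as a CNP kernel), so
$$\|f\|_\alpha^2 + \Re V_f(z) \ge \int_{\overline{\D}} 2\Re s_z^\alpha(\zeta)\, D_\zeta(f)\, d\mu_\alpha(\zeta).$$

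Next I would exploit the radiality: $s_z^\alpha(\zeta) = s_1^\alpha(\bar z \zeta \cdot (\text{something}))$ — more precisely, for $z = r$ real we have $\Re s_r^\alpha(\zeta) = \Re s_1^\alpha(r\zeta)$. The idea is to take $z = r \to 1^-$ along the positive axis and restrict the integral to $\zeta \in S$. For $\zeta \in S$ with $|\zeta|$ close to $1$, the point $r\zeta$ lies (for $r$ near $1$) again essentially inside a cone around the positive axis, so Lemma~\ref{properties of s_1}(iii) gives $\Re s_1^\alpha(r\zeta) \gtrsim (1-|r\zeta|)^{-\alpha} \gtrsim (1-|\zeta|)^{-\alpha}$ once $r$ is chosen appropriately relative to $|\zeta|$. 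Since we need a single $z$ (the sup is outside the integral but a lower bound for one $z$ suffices), I would instead argue: for each $\zeta \in S$ choose $r = r(\zeta)$ making $\Re s_r^\alpha(\zeta)$ large, but since we may only use one $z$ in the final inequality, the cleaner route is to note that the left side has a $\sup_{z\in\D}$, so
$$\|f\|_\alpha^2 + \sup_{z\in\D}\Re V_f(z) \ge \sup_{0<r<1}\left( \|f\|_\alpha^2 + \Re V_f(r)\right) \ge \sup_{0<r<1}\int_S 2\Re s_1^\alpha(r\zeta)\, D_\zeta(f)\, d\mu_\alpha(\zeta),$$
and then pass the $\sup$ inside using Fatou / monotone convergence as $r\to 1^-$, noting $\Re s_1^\alpha(r\zeta) \to \Re s_1^\alpha(\zeta) \gtrsim (1-|\zeta|)^{-\alpha}$ for $\zeta\in S$.

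Finally I would combine $\Re s_1^\alpha(\zeta) \gtrsim (1-|\zeta|)^{-\alpha}$ on $S$ with $d\mu_\alpha(\zeta) \sim (1-|\zeta|)^{\alpha-1}\,dA(\zeta)$ and the elementary lower bound $D_\zeta(f) \gtrsim (1-|\zeta|)\, |f'(\zeta)|^2$ — this last estimate is the standard one relating the local Dirichlet integral at an interior point to the derivative there, valid up to a constant. Multiplying, the powers of $(1-|\zeta|)$ combine as $(1-|\zeta|)^{-\alpha}\cdot(1-|\zeta|)^{\alpha-1}\cdot(1-|\zeta|) = 1$, leaving $\int_S |f'(\zeta)|^2\, dA(\zeta)$ up to an $\alpha$-dependent constant $c_\alpha$, which is exactly the claim. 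The main obstacle I anticipate is making the geometric comparison rigorous: ensuring that for $\zeta\in S$ and $r$ in a suitable range the point $r\zeta$ genuinely falls in the region where Lemma~\ref{properties of s_1}(iii) applies (one may need to shrink $S$ or restrict to $|\zeta|$ bounded away from $0$, harmless since the conclusion only concerns behavior near $\T$), together with justifying the interchange of $\sup_r$ and the integral — but monotonicity in $r$ of $\Re s_1^\alpha(r\zeta)$ for $\zeta$ near the positive axis, or a routine Fatou argument, should handle this.
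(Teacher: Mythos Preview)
Your proposal is correct and follows essentially the same path as the paper: start from Shimorin's identity, add $\|f\|_\alpha^2$ to absorb the $-1$, apply Fatou as $z\to 1$ (taking $z=r$ real suffices) to obtain $\int_\D 2\Re s_1^\alpha(\zeta)\,D_\zeta(f)\,d\mu_\alpha(\zeta)$ as a lower bound, and then combine $D_\zeta(f)\ge(1-|\zeta|^2)|f'(\zeta)|^2$ with $d\mu_\alpha\sim(1-|\zeta|)^{\alpha-1}dA$ and Lemma~\ref{properties of s_1}(iii) on $S$. Your worry about whether $r\zeta$ lands in the cone is unnecessary: since $\Re s_r^\alpha(\zeta)\ge 0$ and $\Re s_r^\alpha(\zeta)\to\Re s_1^\alpha(\zeta)$ pointwise as $r\to 1^-$, Fatou hands you the bound with $s_1^\alpha(\zeta)$ directly, and Lemma~\ref{properties of s_1}(iii) is then applied only at $\zeta$ itself.
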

\begin{proof}
Note that \eqref{Shimorin_formula} implies for all $f\in D_\alpha$,
$$ \|f \|_\alpha^2+  \Re V_f(z)\ge \int_{\D} 2\Re s_z^\alpha(\zeta)  D_\zeta(f) \, d\mu_\alpha (\zeta),$$
  hence by Fatou's lemma 
\begin{align*} \|f \|_\alpha^2+  \sup_{z\in\D }\Re V_f(z)&\ge \liminf_{z\to 1}\int_{\D} 2\Re s_z^\alpha(\zeta)  D_\zeta(f) \, d\mu_\alpha (\zeta)\\&
\ge \int_{\D} 2\Re s_1^\alpha(\zeta)  D_\zeta(f) \, d\mu_\alpha (\zeta).\end{align*}
The standard estimate $(1-|\zeta|^2)|h(\zeta)|^2\le \|h\|_{H^2}^2$,
yields $$D_\zeta(f)\ge (1-|\zeta|^2)|f'(\zeta)|^2.$$
Thus
$$ \|f \|_\alpha^2+  \sup_{z\in\D }\Re V_f(z)\ge \int_{\D} 2\Re s_1^\alpha(\zeta) |f'(\zeta)|^2(1-|\zeta|^2) d\mu_\alpha (\zeta),$$
and since 
$d\mu_\alpha=-(1-|z|^2)\Delta(1-|z|^2)^\alpha dA$, the result follows by Lemma \ref{properties of s_1}  (iii).
\end{proof}

\begin{Pf} {\it Propostion \ref{unbounded sarason}.} Let $f=1-\frac1{s_1^\alpha}$ and recall from Lemma \ref{properties of s_1} (i) that $f\in \Mult(D_\alpha)$ with $\|M_f\|\le 1$.  Since $|f(z)|<1,~z\in \D$, it follows that $(M_f^*)^ns^\alpha_z=(\overline{f(z)})^ns_z^\alpha$ converges to zero in $D_\alpha$, hence  $(M_f^*)^n$ converges to zero in the strong operator topology. Thus $M_f$ admits a contractive weak-$*$-weak-$*$ continuous
  $H^\infty$-functional calculus. Since this functional calculus extends the polynomial functional calculus,
  it is easy to check that $g(M_f) = M_{g \circ f}$ for all $g \in H^\infty$, i.e. $g\circ f\in \Mult(D_\alpha)$ for all $g\in H^\infty$ with $\|g\circ f\|_{\Mult(D_\alpha)}\le\|g\|_\infty$ (a similar construction
  appears in \cite[Lemma 12]{DE12}).
We claim that if $B$ is an infinite interpolating Blaschke product with zeros in $[0,1)$ then $\Re V_{B\circ f}$ is unbounded in $\D$.\\
Let $\{z_n:n\ge 1\}$ be the zero-set of $B$ and use Lemma \ref{properties of s_1} (ii) to conclude that $z_n=f(w_n),~n\ge 1$, with $w_n\in [0,1),~\lim_{n\to\infty} w_n= 1$. Then, with the notations in  Lemma \ref{properties of s_1} (iii), there exists an infinite set $J\subset \mathbb{N}$ such that the  discs $\Delta_n=\{|z-w_n|<\frac{\varepsilon_\alpha}{3}(1-w_n)\},~n\in J$, are disjoint.
Note that  by the triangle inequality we have for  $z\in \Delta_n$
$$1-|z|>(1-\frac{\varepsilon_\alpha}{3})(1-w_n),\,\,|z-|z||<2|z-w_n|<\frac{2\varepsilon_\alpha}{3}(1-|w_n|)<\varepsilon_\alpha(1-|z|),$$
so that $\Delta_n\subset S$. 
 Since $\|B\|_\infty=1$,   Lemma \ref{estimate-below} gives
$$1+\sup_{z\in\D}V_{B\circ f}(z)\ge c_\alpha\sum_J\int_{\Delta_n}|(B\circ f)'|^2dA,$$
and since  $|(B\circ f)'|^2$ is subharmonic in $\D$, we obtain
$$1+\sup_{z\in\D}V_{B\circ f}(z)\ge \frac{\pi\varepsilon_\alpha^2 c_\alpha}{9}\sum_J(1-w_n)^2|(B\circ f)'(w_n)|^2.$$ 
It suffices to show that the values  $(1-w_n)|(B \circ f)'(w_n)|,~n\in \mathbb{N}$, are bounded below.\\
  Since $B$ is interpolating, there exists $\delta>0$ such that $$|B'(z_n)|\ge\delta (1-z_n),\quad n\in \mathbb{N}.$$ From $z_n=f(w_n)$ it follows that $1-z_n=\frac1{s_1^\alpha(w_n)}$, hence 
$$(1-w_n)|(B \circ f)'(w_n)|=(1-w_n)|B'(z_n)|\frac{(s_1^\alpha)'(w_n)}{(s_1^\alpha(w_n))^2}\ge \delta(1-w_n)
\frac{(s_1^\alpha)'(w_n)}{s_1^\alpha(w_n)},$$
and the result follows by Lemma \ref{properties of s_1} (ii).
\end{Pf}

\end{document}